\numberwithin{equation}{section}
\newcommand\specl{dE_{\sqrt{H}}(\lambda)}
\newcommand\R{\mathbb{R}}
\newcommand\Z{\mathbb{Z}}
\numberwithin{equation}{section}
\newtheorem{proposition}{Proposition}[section]
\newtheorem{lemma}{Lemma}[section]
\newtheorem{theorem}{Theorem}[section]
\newtheorem{remark}{Remark}[section]
\begin{document}

\author{Yannick Sire}
\address{Department of Mathematics\\                Johns Hopkins University\\                Baltimore, MD 21218}
\email{sire@math.jhu.edu}

\author{Christopher D. Sogge}
\address{Department of Mathematics\\                Johns Hopkins University\\                Baltimore, MD 21218}
\email{sogge@math.jhu.edu}

\author{Chengbo Wang}
\address{School of Mathematical Sciences\\                Zhejiang University\\                Hangzhou 310027, China}
\email{wangcbo@zju.edu.cn}

\author{Junyong Zhang}
\address{Department of Mathematics\\  Beijing Institute of Technology\\ Beijing 100081, China;  Cardiff University, UK}
\email{zhang\_junyong@bit.edu.cn; ZhangJ107@cardiff.ac.uk}

\email{}

\title[Strichartz estimates and Strauss conjecture on AH ]{Strichartz estimates and Strauss conjecture on non-trapping asymptotically hyperbolic manifolds}
\maketitle

\begin{abstract} We prove global-in-time Strichartz estimates for the shifted wave equations on non-trapping
asymptotically hyperbolic manifolds. The key tools are the spectral measure estimates from
\cite{CH2} and arguments borrowed from \cite{HZ, Zhang}. As an application, we prove  
the small data global existence for any power $p\in(1, 1+\frac{4}{n-1})$ for the shifted wave equation in this setting, 
involving nonlinearities of the form $\pm|u|^p$ or $\pm|u|^{p-1}u$, which answers partially an open question raised  in \cite{SSW}.
\end{abstract}

\tableofcontents
\begin{center}
 \begin{minipage}{120mm}
   { \small {\bf Key Words: Strichartz estimate, Asymptotically hyperbolic manifold, Spectral measure, Strauss conjecture, Shifted wave }
      {}
   }\\
    { \small {\bf AMS Classification:}
      { 47J35, 35L71, 35L05,  35S30.}
      }
 \end{minipage}
 \end{center}
\section{Introduction and Main Results}
The purpose of this paper is to study the dispersive behaviour of the linear wave equation on non-trapping asymptotically hyperbolic manifolds, which is a class of manifolds with variable curvature, and its application to the  small data global existence for the nonlinear Cauchy problem with power nonlinearities. 

\subsection{Background on Strichartz estimates}The dispersive decay and Strichartz estimates are known to play an important role in the study of the behaviour of solutions to nonlinear Schr\"odinger equation, nonlinear wave equations and other nonlinear dispersive equations; e.g. see Tao \cite{Tao}. The first aim of this article is to prove global-in-time Strichartz estimates for the wave equation on non-trapping asymptotically hyperbolic manifolds.\vspace{0.2cm}

Let $(M^\circ, g)$ be a Riemannian manifold of dimension $n\geq2$,
and let $I\subset\R$ be a time interval. Suppose $u(t,z)$: $I\times
M^\circ\rightarrow\mathbb{R}$ is a  solution of the wave
equation
\begin{equation*}
\partial_{t}^2u-\Delta_g u=F, \quad u(0)=u_0(z), ~\partial_tu(0)=u_1(z),
\end{equation*}
where $\Delta_g$ denotes the Laplace-Beltrami
operator on $(M^\circ, g)$.
The general Strichartz
estimates show
\begin{equation}\label{stri-e}
\begin{split}
&\|u(t,z)\|_{L^q_t(I;L^r_z(M^\circ))}+\|u(t,z)\|_{C(I;\dot H^s(M^\circ))}\\
&\qquad\lesssim \|u_0\|_{\dot H^s(M^\circ)}+\|u_1\|_{\dot
H^{s-1}(M^\circ)}+\|F\|_{L^{\tilde{q}'}_t(I;L^{\tilde{r}'}_z(M^\circ))},
\end{split}
\end{equation}
where $\dot H^s$ denotes the homogeneous $L^2$-Sobolev space over $M^\circ$, and the pairs $(q,r), (\tilde{q},\tilde{r})\in [2,\infty]^2$
satisfy the wave-admissible condition
\begin{equation}\label{adm}
\frac{2}q+\frac{n-1}r\leq\frac{n-1}2,\quad (q,r,n)\neq(2,\infty,3)
\end{equation}
and the gap condition
\begin{equation}\label{scaling}
\frac1q+\frac nr=\frac n2-s=\frac1{\tilde{q}'}+\frac
n{\tilde{r}'}-2.
\end{equation}
It is well known that \eqref{stri-e} holds for $(M^\circ, g)=(\R^n,\delta)$ with $I=\R$ and $r,\tilde{r}<\infty$, and the result is sharp; see Strichartz
\cite{Str}, Ginibre-Velo \cite{GV}, Keel-Tao \cite{KT}, and
references therein. There is a huge literature about Strichartz inequalities on Euclidean space or manifolds, and it is beyond the scope of this introduction to review all of it. 
We instead mention few of the most relevant papers about  Strichartz estimates for the wave equation on the real hyperbolic spaces. 
On the real hyperbolic spaces $\mathbb{H}^n$,  Anker-Pierfelice \cite{AP}, 
Anker-Pierfelice-Vallarino \cite{APV}, Metcalfe-Taylor \cite{MT1,MT2} and Tataru \cite{T}
have showed better dispersive estimates and hence stronger results than in the Euclidean space. More precisely, they can obtain results with $(q,r)$ exterior of the range \eqref{adm}. Our first results will
generalize their results to any non-trapping asymptotically hyperbolic space, i.e. a  non-compact Riemannian manifold with variable curvature 
in which conjugate points can possibly appear, causing the failure of the usual dispersive estimate.\vspace{0.2cm}

\subsection{The setting} In this paper, we work on an $n$-dimensional complete non-compact Riemannian manifold $(M^\circ, g)$ where the metric $g$ is an asymptotically hyperbolic metric. 
This setting is the same as in Chen-Hassell
\cite{CH1,CH2}, Mazzeo \cite{M} and Mazzeo-Melrose\cite{MM}.  Let $x$ be a boundary defining function for the compactification $M$ of $M^\circ$.  We say a metric $g$ is conformally compact if $x^2g$ is
a Riemannian metric and extends smoothly up to the boundary $\partial M$. Mazzeo \cite{M} showed that its sectional curvature tends to $-|dx|^2_{x^2g}$ as $x\to0$; In particular, if the limit is such that $-|dx|^2_{x^2g}=-1$, we
say  that the conformally compact metric $g$ is asymptotically hyperbolic. More specifically, let $y=(y_1,\cdots,y_{n-1})$ be local coordinates on $Y=\partial
M$, and $(x,y)$ be the local coordinates on $M$ near $\partial M$; the metric $g$ in a
collar neighborhood $[0,\epsilon)_x\times \partial M$ takes
the form 
\begin{equation}\label{metric}
g=\frac{dx^2}{x^2}+\frac{h(x,y)}{x^2}=\frac{dx^2}{x^2}+\frac{\sum
h_{jk}(x,y)dy^jdy^k}{x^2},
\end{equation}
where $x\in C^{\infty}(M)$ is a boundary defining function for
$\partial M$ and $h$ is a smooth family of metrics on $Y=\partial M$.  In addition, if every geodesic
in $M$ reaches $\partial M$ both forwards and backwards, we say $M$ is
nontrapping.  The Poincar\'e disc $(\mathbb{B}^{n}, g)$ is a typical example of such manifold.  Indeed, considering the ball $\mathbb{B}^{n}=\{z\in\R^n: |z|<1\}$
endowed with the metric
\begin{equation}
g=\frac{4dz^2}{(1-|z|^2)^2},
\end{equation}
one can take $x=(1-|z|)(1+|z|)^{-1}$ as the boundary defining function and $\omega$ the coordinates on $\mathbb{S}^{n-1}$. Then the Poincar\'e metric 
takes the form $$g=\frac{dx^2}{x^2}+\frac{\frac14(1-x^2)^2 d\omega^2}{x^2},$$
where $d\omega^2$ is the standard metric on the sphere $\mathbb{S}^{n-1}$. 
Another typical example is the real hyperbolic space $\mathbb{H}^n$ which is a complete simply connected manifold of constant negative curvature $-1$.
Since the curvature is a negative constant, $\mathbb{H}^n$ is automatically non-trapping and has no conjugate points. 
\vspace{0.2cm}

\subsection{The main result about Strichartz estimate} 
Consider the wave equation associated to the Laplace-Beltrami operator $\Delta_g$ on the non-trapping asymptotically hyperbolic manifold $(M^\circ,g)$:
\begin{equation}\label{wave}
\begin{cases}
\partial_{t}^2u-\Delta_g u=F, \\ u(0)=u_0(z), ~\partial_tu(0)=u_1(z).
\end{cases}
\end{equation}
From Mazzeo-Melrose\cite{MM}, the continuous spectrum of $-\Delta_g$ is contained in $[\frac{(n-1)^2}4,+\infty)$, while the point spectrum is contained in $(0, \frac{(n-1)^2}4)$. 
When $-\Delta_g$ has no point spectrum, it is natural to consider a family of Klein-Gordon equations 
\begin{equation}\label{KG}
\begin{cases}
\partial_{t}^2u(t,z)-\Delta_g u(t,z)+m u(t,z)=F(t,z), \\ u(0)=u_0(z), ~\partial_tu(0)=u_1(z),
\end{cases}
\end{equation}
where the constant 
\begin{equation}
m\geq-\rho^2:=-(n-1)^2/4.
\end{equation}
In particular for $m=-\rho^2$, the equation is named the \emph{shifted} wave equation. In this paper, we focus on 
the shifted wave equation on any
non-trapping asymptotically hyperbolic manifold, motivated by the problem of small data global existence raised in \cite{SSW}.
Another motivation is to continue the study of dispersive equations on manifolds with variable curvature. As mentioned above, there possibly exist conjugate points in the variable curvature setting and they cause the failure of the usual dispersive estimates, but not of the Strichartz estimates. For example, on non-trapping asymptotically conic manifold whose curvature tends to zero as the boundary defining function $x\to0$, Hassell and the last author \cite{HZ, Zhang} established the global-in-time Strichartz estimates for Schr\"odinger and wave equations; which are the same as in Euclidean space. While on the non-trapping asymptotically hyperbolic manifold whose sectional curvature tends $-1$,  Chen \cite{Chen}
showed Strichartz estimates for the Schr\"odinger equation,  which are stronger than the Euclidean result. The crucial point in these papers is to use the microlocal  method to deal with the conjugate points of the manifold.
If the manifold has nonpositive curvature, e.g. the hyperbolic space $\mathbb{H}^n$  considered in \cite{AP, APV, MT1,MT2,T}, 
then there are no conjugate points. In the Euclidean space, the Strichartz estimates usually are proved by interpolating $L^2$-estimate and
a dispersive estimate. For Schr\"odinger equation, the dispersive estimate directly follows from the representation of the solution. While for the wave equation, the dispersive estimate requires a more complicated argument, which
typically involves Littlewood-Paley theory. However, in the hyperbolic setting, the usual Littlewood-Paley theory is missing, see Bouclet \cite{Bouclet}. To get around this, in the real hyperbolic space with constant sectional curvature $-1$, Metcalfe-Taylor \cite{MT1} 
made use of Sobolev spaces based on bmo-spaces, and interpolation results from \cite{Taylor}; Anker-Pierfelice \cite{AP} and
Anker-Pierfelice-Vallarino \cite{APV} used a good representation of fundamental solution of wave equation and complex interpolation argument. Before these works, Tataru \cite{T} obtained Strichartz estimates for $\mathbb H^n$ using complex interpolation.

For the variable curvature setting, we do not know such precise results. Of course, a standard replacement (which is very often sufficient) can be to use the Littlewood-Paley-Stein theory based on heat semi-groups;
see \cite{L, LOS}. In this case also, we could not overcome the technical issues. We take then a new approach. Our approach consists in splitting the solution space into low and high frequencies. We derive general Strichartz estimates, of independent interest, and use part of them (high frequencies) to obtain the global well-posedness for power-type nonlinearities. The argument crucially uses a microlocalized spectral measure estimate, which is a replacement for the argument involving restriction theorem (like Stein-Tomas theorem) for the Euclidean case.  \vspace{0.2cm}

Now we state our main result on the Strichartz estimate. Before doing so, we introduce some notation. 
Let $H=-\Delta_g-\rho^2$ and $\chi\in\mathcal{C}_c^\infty([0,\infty)$ such that $\chi(\lambda) = 1$ for $\lambda \leq 1$ and vanishes when $\lambda\geq2$.  
Define the norm of  $H^{a,b}_c$  by
\begin{equation}\label{space}
\|f\|_{H^{a,b}_c}=\|(1-\chi)(\sqrt{H})H^{\frac {a}2}f\|_{L^c}+\|\chi(\sqrt{H})H^{\frac {b}2}f\|_{L^c}.
\end{equation}
In the particular case $c=2$, we write briefly $H^{a,b}$. The space introduced here is an analogue of the usual Sobolev space but with separated regularity corresponding to high and low frequencies.
Next we define the sets related with the admissible conditions:
\begin{equation}\label{w-admissible}
\Lambda_w=\Big\{(q,r,\mu)\in [2,\infty]\times (2,\infty]\times\R: \frac{2}q\leq(n-1)(\frac12-\frac1r), \quad\mu>s_w\Big\},
\end{equation}
where 
\begin{equation}\label{sw}
s_w=n(\frac12-\frac1r)-\frac1q
\end{equation}
and 
\begin{equation}\label{e-admissible}
\Lambda_e=\Big\{(q,r,\mu)\in [2,\infty]\times (2,\infty]\times\R: \frac{2}q\geq(n-1)(\frac12-\frac1r), \quad\mu>s_e\Big\},
\end{equation}
where 
\begin{equation}\label{se}
s_e=\frac{n+1}2(\frac12-\frac1r).
\end{equation} 
We remark here that $\mu$ in the above sets is strictly greater than the optimal exponents $s_w$ and $s_e$. This fact will imply a loss of regularity for high frequencies in the Strichartz estimates. 

\begin{minipage}{0.5\textwidth}
\begin{center}
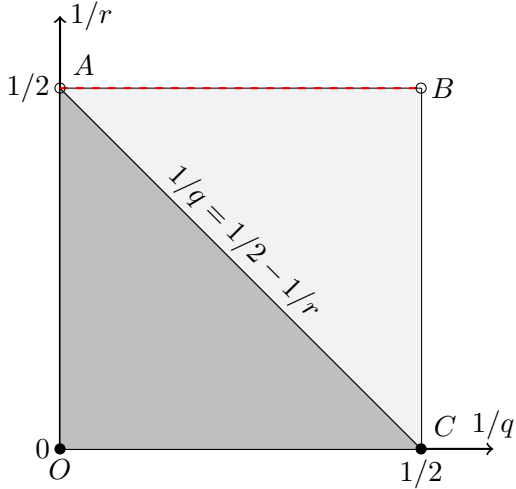

 \begin{tikzpicture}[scale=2.4]
  \draw[->,thick]  (0,0) -- (0,2.4);     
 \draw[->,thick]  (0,0) -- (2.4,0);  
 
\draw[thick] (0,0) -- (0,2)
                     (0,0) -- (2,0)
                     (0,2) -- (2,0)
                     (2,0) -- (2,2);

\draw (2,0) node[below] {$1/2$};
\draw (0,2) node[left] {$1/2$};
\draw (0,0) node[below, left] {$0$};
\draw (2.4,0) node[above] {$1/q$};
\draw (0,2.4) node[right] {$1/r$};

\filldraw[fill=gray!50](0,0)--(0,2)--(2,0); 
\filldraw[fill=gray!10](2,2)--(0,2)--(2,0); 

\draw[red, dashed, thick] (0, 2) -- (2,2); 

\draw (2,2) circle (0.8pt) node[right] {$B$};
\draw[fill=black](2,0) circle (0.8pt) node[above right=0.5mm of {(2,0)}] {$C$};
\draw  (0,2) circle (0.8pt) node[above right=0.5mm of {(0,2)}]{$A$};
\draw [fill=black] (0,0) circle (0.8pt) node[below] {$O$};

\node [rotate=-45] [above right= -0.5mm of {(0.5,1.5)}] {${1/q=1/2-1/r}$};
\end{tikzpicture}

\captionsetup{font=footnotesize}
\captionof{figure}{The range of $(q,r)$ when $n=3$. If $(q,r,\mu)\in\Lambda_w$, then $(q,r)$ is in
the triangle region ACO; While $(q,r,\mu)\in\Lambda_e$, then $(q,r)$ is in
the region ABC. }
\end{center}
\end{minipage}
\begin{minipage}{0.5\textwidth}
\begin{center}
 \begin{tikzpicture}[scale=2.4]
  \draw[->,thick]  (0,0) -- (0,2.4);     
 \draw[->,thick]  (0,0) -- (2.4,0);  
 
\draw[thick] (0,0) -- (0,2)
                     (0,0) -- (2,0)
                     (0,2) -- (2,1)
                     (2,0) -- (2,2);

\draw (2,0) node[below] {$1/2$};
\draw (0,2) node[left] {$1/2$};
\draw (0,0) node[below, left] {$0$};
\draw (2.4,0) node[above] {$1/q$};
\draw (0,2.4) node[right] {$1/r$};

\filldraw[fill=gray!50](0,0)--(0,2)--(2,0.5)--(2,0); 
\filldraw[fill=gray!10](2,2)--(0,2)--(2,0.5); 

\draw[red, dashed, thick] (0, 2) -- (2,2); 

\draw (2,2) circle (0.8pt) node[right] {$B$};
\draw [fill=black]  (2,0.5) circle (0.8pt) node[right] {$C$};
\draw [fill=black] (2,0) circle (0.8pt) node[above right=0.5mm of {(2,0)}] {$D$};
\draw (0,2) circle (0.8pt) node[above right=0.5mm of {(0,2)}]{$A$};
\draw [fill=black] (0,0) circle (0.8pt) node[below] {$O$};

\node [rotate=-35] [above right= 0.2mm of {(0.2,1.5)}] {${2/q=(n-1)(1/2-1/r)}$};
\end{tikzpicture}

\captionsetup{font=footnotesize}
\captionof{figure}{The range of $(q,r)$ when $n\geq4$. If $(q,r,\mu)\in\Lambda_w$, then $(q,r)$ is in
the region ACDO; While $(q,r,\mu)\in\Lambda_e$, then $(q,r)$ is in
the triangle region ABC. }
\end{center}
\end{minipage}
\vspace{0.2cm}

Our result about the homogeneous Strichartz estimate is the following.
\begin{theorem}[Homogeneous Strichartz estimate]\label{thm:hstri} Let  $(M^\circ,g)$ be any non-trapping asymptotically hyperbolic manifold of dimension
$n\geq2$ and let  $\Delta_g$ be the Laplace-Beltrami operator on $(M^\circ, g)$ and $\rho^2=(n-1)^2/4$. Assume $\Delta_g$ has no
pure point eigenvalue and has no resonance at the bottom of the continuous spectrum $\rho^2$. Suppose that $u$ is a
solution to the Cauchy problem
\begin{equation}\label{eq}
\begin{cases}
\partial_{t}^2u-\Delta_g u-\rho^2 u=0, \quad (t,z)\in I\times M^\circ; \\ u(0)=u_0(z),
~\partial_tu(0)=u_1(z),
\end{cases}
\end{equation}
for some initial data $u_0\in  H^{\mu,0}(M^\circ), u_1\in  H^{\mu-1,-\epsilon}(M^\circ)$ defined in \eqref{space}, and
the time interval $I\subseteq\R$, then
\begin{equation}\label{hstri}
\begin{split}
&\|u(t,z)\|_{L^q_t(I;L^r_z(M^\circ))}
\lesssim \|u_0\|_{H^{\mu,0}(M^\circ)}+\|u_1\|_{
H^{\mu-1,-\epsilon}(M^\circ)},
\end{split}
\end{equation}
where $(q,r,\mu)\in \Lambda_w\cup \Lambda_e $ defined in \eqref{w-admissible} and \eqref{e-admissible}, $0<\epsilon\ll1$. 
\end{theorem}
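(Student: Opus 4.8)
The plan is to establish \eqref{hstri} by reducing it to two microlocalized estimates via a frequency decomposition adapted to the operator $H=-\Delta_g-\rho^2$, and then exploiting the spectral measure estimates of \cite{CH2}. First I would write $u(t,z)$ through the spectral theorem as a superposition of $\cos(t\sqrt{H})u_0$ and $\frac{\sin(t\sqrt{H})}{\sqrt{H}}u_1$, and insert a dyadic partition of unity $1=\chi(\sqrt H)+\sum_{j\geq 0}\varphi(2^{-j}\sqrt H)$ in the spectral parameter. The low-frequency piece $\chi(\sqrt H)$ and the high-frequency pieces $\varphi(2^{-j}\sqrt H)$ are treated separately because the Sobolev spaces $H^{a,b}$ in \eqref{space} have been deliberately set up so that only low-frequency regularity $b$ is demanded below and only high-frequency regularity $a=\mu$ above; the presence of $H^{\mu-1,-\epsilon}$ for $u_1$ (a \emph{gain} of $\epsilon$ derivatives at low frequency) reflects the better low-frequency dispersion in the hyperbolic setting, where the spectrum of $-\Delta_g$ is bounded away from $0$.

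Next I would handle the low-frequency part. On the support of $\chi(\sqrt H)$ the operator $\sqrt H$ is, roughly, comparable to a bounded elliptic operator, and one should be able to use the exponential decay of the wave propagator at low frequency on asymptotically hyperbolic manifolds (as in \cite{AP, APV, MT1, MT2, T, Chen} for $\mathbb H^n$, adapted here via the spectral measure bounds of \cite{CH2}) together with a $TT^*$ argument and the Keel--Tao machinery \cite{KT} to obtain the full range of admissible $(q,r)$ with no loss; in fact at low frequency one expects estimates for $(q,r)$ outside the classical wave-admissible range, which is exactly why the admissible sets $\Lambda_w,\Lambda_e$ here are larger than \eqref{adm}. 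The key input is the pointwise/operator bound on $dE_{\sqrt H}(\lambda)$ for $\lambda$ in a bounded interval, which after integrating against the half-wave phase $e^{it\lambda}$ yields the dispersive decay needed to run the abstract Strichartz argument.

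For the high-frequency part, which is the main obstacle, I would follow the strategy of \cite{HZ, Zhang}: dyadically localize to frequency $\lambda\sim 2^j$, rescale, and split the rescaled propagator into a "near-diagonal" (small time) piece and "away-from-diagonal" pieces, using the microlocalized spectral measure estimates of \cite{CH2} which encode the structure near conjugate points. For the away-from-diagonal / large-time regime one loses the pointwise dispersive bound because of conjugate points, so instead one uses the $L^2$-based microlocal decomposition into pieces on which the Schwartz kernel of $\varphi(2^{-j}\sqrt H)e^{it\sqrt H}$ does satisfy a dispersive estimate, sums the pieces with the help of the frequency-localized $L^2$ bound, and pays for the summation with the $\epsilon$-loss in $\mu$ (i.e. $\mu>s_w$ resp. $\mu>s_e$ rather than equality) — this is precisely the "loss of regularity for high frequencies" flagged before the theorem. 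One then invokes Keel--Tao \cite{KT} with the resulting (possibly semiclassical) dispersive estimate to get the fixed-$j$ Strichartz bound with the sharp power of $2^j$, namely $2^{j s_w}$ on $\Lambda_w$ and $2^{j s_e}$ on $\Lambda_e$, and finally sums over $j$; the geometric series converges because $\mu$ is strictly larger than the critical exponent, giving \eqref{hstri}. The no-eigenvalue and no-resonance hypotheses are used to guarantee that the low-energy spectral measure is well-behaved (no singular contribution at $\lambda=0$, equivalently at the bottom $\rho^2$ of the continuous spectrum of $-\Delta_g$), so that the two regimes glue together cleanly.
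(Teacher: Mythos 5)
Your plan matches the paper's proof of Theorem~\ref{thm:hstri} in outline: split into low/high frequency, use the Chen--Hassell microlocalized spectral measure to derive dispersive bounds, run a frequency-localized $TT^*$ argument \`a la Keel--Tao for each dyadic piece, and sum over $j$ using the strict inequality $\mu>s$. However, two ingredients the paper relies on essentially are absent from your sketch, and without them the argument does not close.

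First, the low-frequency estimate on all of $\Lambda_w\cup\Lambda_e$ (in particular at $q=2$ with $r$ close to $2$) is \emph{not} obtained by Keel--Tao interpolation between an $L^1\to L^\infty$ dispersive bound and $L^2$ conservation; that procedure only reaches the Euclidean range \eqref{adm}. Instead, the low-energy kernel carries the spatial weight $e^{-(\rho-\delta)d(z,z')}$, and the paper feeds this into the Kunze--Stein-type Lemma~\ref{lem:qq'} to pass \emph{directly} to a uniform $L^{r'}\to L^r$ bound with time-integrable decay $(1+|t-\tau|)^{2\sigma-3\mp\epsilon}$, after which Young's inequality in time works even at $q=2$. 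This Kunze--Stein step is the hyperbolic-specific mechanism behind the enlarged admissible range at low frequency; the ``exponential decay of the wave propagator'' you invoke plays no role in the abstract Keel--Tao machinery you propose to apply, so your sketch would not produce the claimed range. Relatedly, for the high-frequency part the superpolynomial long-time decay $|t-\tau|^{-K}$ for $|t-\tau|\geq 2$ in Proposition~\ref{prop:m-disper} is what allows one to use Young's inequality instead of Hardy--Littlewood--Sobolev and hence reach $\Lambda_e$; this is different from the Euclidean argument and is not mentioned.

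Second, the role of $-\epsilon$ in $H^{\mu-1,-\epsilon}$ defined in \eqref{space} is not what you describe. The $u_1$ contribution at low frequency corresponds to $\sigma=1$ (the propagator is $e^{it\sqrt H}H^{-1/2}$), but the estimate \eqref{stri''} at $\sigma=1$ carries a factor $2^{-j\epsilon}$ that fails to sum over $j\leq 0$ when $q=2$. The paper circumvents this by running the low-frequency argument at $\sigma=\beta=1-\epsilon<1$, which falls into the summable case \eqref{stri'}, and absorbing the leftover $H^{-\epsilon/2}$ onto the data, producing exactly the low-frequency piece of the $H^{\mu-1,-\epsilon}$ norm of $u_1$. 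This is the actual reason the $\epsilon$-parameter appears; it is not a reflection of ``better low-frequency dispersion.''
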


\begin{remark} 
The Strichartz estimate is global in time but with an arbitrary small loss in regularity of high frequency  which is a bit weaker than the estimates in \cite{T,AP, APV, MT1} on the hyperbolic space $\mathbb{H}^n$. 
The loss comes from our techniques since we lack  the (standard) Littlewood-Paley square function estimate due to the non-doubling property of the manifold (or a good representation of the fundamental solution  as in \cite{T,AP, APV}). 
\end{remark}

\begin{remark} 
Compared with \cite{T,AP, APV, MT1}, the general setting considered here may have conjugate points which can lead the usual dispersive estimate to fail. It is known that the sharp regularity Strichartz estimate in Euclidean space fails for admissible pairs (e.g. $q=2$ and $r=\infty$ when $n=3$), but we obtain the inequalities for the admissible pairs including  $q=2$.
\end{remark}

\begin{remark} 
We exclude the case $r=2$. At the special point $A$,  that is, $q=\infty, r=2$, the usual Strichartz estimate holds. For example, in the Euclidean space, the Strichartz estimate holds 
at $A$ if $\|u_0\|_{L^2}+\|u_1\|_{\dot H^{-1}}<\infty$; however one also can recover the estimate \eqref{hstri} at $A$ but with  $\epsilon=1$ by using Proposition \ref{energy} below.  
In this sense, the result gains some regularity in low frequency. 
\end{remark}

\begin{theorem}[Inhomogeneous Strichartz estimate] \label{thm:ihstri} Let $\Delta_g$ be as in Theorem \ref{thm:hstri}  and suppose that $u$ is a
solution to the Cauchy problem
\begin{equation}\label{eq}
\begin{cases}
\partial_{t}^2u-\Delta_g u-\rho^2 u=F(t,z), \quad (t,z)\in I\times M^\circ; \\ u(0)=0,
~\partial_tu(0)=0,
\end{cases}
\end{equation}
 and the time interval $I\subseteq\R$, then
 \begin{equation}\label{ihstri}
\begin{split}
&\|u(t,z)\|_{L^q_t(I;L^r_z(M^\circ))}
\lesssim \|F\|_{L^{\tilde{q}'}_t(I;H^{\mu+\tilde{\mu}-1,0}_{\tilde{r}'}(M^\circ))},
\end{split}
\end{equation}
where $(q,r,\mu), (\tilde{q},\tilde{r},\tilde{\mu})\in \Lambda_w\cup \Lambda_e $.
\end{theorem}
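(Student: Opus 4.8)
The plan is to deduce \eqref{ihstri} from the homogeneous estimate of Theorem \ref{thm:hstri} by the standard combination of Duhamel's formula, a $TT^{*}$ argument, and the Christ--Kiselev lemma, carried out separately for the high- and low-frequency parts of the source term. Write $H=-\Delta_g-\rho^{2}\ge 0$, so that the Duhamel solution is $u(t)=\int_{0}^{t}\frac{\sin((t-s)\sqrt H)}{\sqrt H}F(s)\,ds$; using $\frac{\sin((t-s)\sqrt H)}{\sqrt H}=\frac{1}{2i}\big(e^{i(t-s)\sqrt H}-e^{-i(t-s)\sqrt H}\big)H^{-1/2}$, it suffices to bound $\int_{0}^{t}e^{\pm i(t-s)\sqrt H}H^{-1/2}F(s)\,ds$ in $L^{q}_t(I;L^{r}_z)$. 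First I would record the operator form of Theorem \ref{thm:hstri}: applying it with data $(u_{0},i\sqrt H\,u_{0})$, so that the solution is $e^{it\sqrt H}u_{0}$, and noting that the $H^{-\epsilon/2}$ in the low-frequency slot of the $u_{1}$-norm is defeated by the $\sqrt H$ factor, one obtains, for $(q,r,\mu)\in\Lambda_w\cup\Lambda_e$ and any interval $I$,
\begin{equation*}
\big\|e^{\pm it\sqrt H}(1-\chi)(\sqrt H)f\big\|_{L^{q}_t(I;L^{r}_z)}\lesssim \big\|H^{\mu/2}(1-\chi)(\sqrt H)f\big\|_{L^{2}},\qquad \big\|e^{\pm it\sqrt H}\chi(\sqrt H)f\big\|_{L^{q}_t(I;L^{r}_z)}\lesssim \|f\|_{L^{2}}.
\end{equation*}
Since the Duhamel operator commutes with $\chi(\sqrt H)$, I split $u=u_{\mathrm{hi}}+u_{\mathrm{lo}}$ according to a (slightly enlarged) frequency cutoff of $F$, treat the two pieces by the same scheme, and recombine at the end.

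For $u_{\mathrm{hi}}$ I would dualize the first estimate above in the dual triple $(\tilde q,\tilde r,\tilde\mu)$ and compose it with the same estimate for $(q,r,\mu)$ to get the untruncated bound
\begin{equation*}
\Big\|\int_{I}e^{\pm i(t-s)\sqrt H}\,H^{-(\mu+\tilde\mu)/2}(1-\chi)^{2}(\sqrt H)\,G(s)\,ds\Big\|_{L^{q}_t(I;L^{r}_z)}\lesssim \|G\|_{L^{\tilde q'}_t(I;L^{\tilde r'}_z)}.
\end{equation*}
Choosing $G$ so that $H^{-(\mu+\tilde\mu)/2}(1-\chi)^{2}(\sqrt H)G$ equals the high-frequency part of $H^{-1/2}F$ — which is legitimate after the routine adjustment of cutoffs, since at frequencies $\gtrsim 1$ the operator $H^{-1/2}$ is bounded and $(1-\chi)$ is bounded away from $0$, the needed spectral-multiplier bounds being available from the functional calculus of \cite{CH1,CH2} — turns the left side into $u_{\mathrm{hi}}$ with $\int_{0}^{t}$ replaced by $\int_{I}$, while the right side is comparable to $\|(1-\chi)(\sqrt H)H^{(\mu+\tilde\mu-1)/2}F\|_{L^{\tilde q'}_t L^{\tilde r'}_z}$, i.e.\ the high-frequency part of $\|F\|_{L^{\tilde q'}_t H^{\mu+\tilde\mu-1,0}_{\tilde r'}}$; the two regularity losses $\mu-s_w>0$ and $\tilde\mu-\tilde s_w>0$ simply add up. It then remains to pass from the untruncated integral to the retarded one, which is precisely the Christ--Kiselev lemma and is valid as long as $q>\tilde q'$, i.e.\ $\frac1q+\frac1{\tilde q}<1$. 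As $q,\tilde q\ge 2$, the only excluded case is the double endpoint $q=\tilde q=2$; there Christ--Kiselev fails, and I would instead invoke the bilinear inhomogeneous Strichartz estimate of Keel--Tao \cite{KT}, applied to the frequency-localized dispersive bounds that underlie Theorem \ref{thm:hstri}.

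For $u_{\mathrm{lo}}$ the same argument with $(1-\chi)(\sqrt H)$ replaced by $\chi(\sqrt H)$ formally produces the factor $H^{-1/2}\chi(\sqrt H)$, which — unlike its high-frequency analogue — is not a bounded operator, because the spectrum of $H$ reaches $0$. This is the one genuinely non-formal point, and the main obstacle of the proof: near the bottom of the spectrum one must use the refined low-frequency spectral measure estimates of \cite{CH2} (the counterpart of the sharp hyperbolic-space bounds of \cite{AP,APV,MT1}), which reflect that on a non-trapping asymptotically hyperbolic manifold the low-frequency shifted half-wave disperses strongly enough — by the exponential volume growth, equivalently the vanishing of $dE_{\sqrt H}(\lambda)$ as $\lambda\to 0$ — to absorb this factor. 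Since there are no conjugate points in this regime, the analysis parallels the low-frequency part of the proof of Theorem \ref{thm:hstri} (and the hyperbolic-space literature), and the inhomogeneous estimate, endpoints included, follows from the same dispersive input together with the Christ--Kiselev lemma and the Keel--Tao inhomogeneous estimate. Finally I would reassemble $u=u_{\mathrm{hi}}+u_{\mathrm{lo}}$: the two cutoffs overlap only on a compact frequency band where both estimates apply, and the sum of the contributions is dominated by the full $H^{\mu+\tilde\mu-1,0}_{\tilde r'}$ norm of $F$, giving \eqref{ihstri}.
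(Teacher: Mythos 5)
Your outline for the non-endpoint case ($q>\tilde q'$) is essentially the paper's: Duhamel, split into low/high frequency, compose the homogeneous estimate with its dual via $TT^*$, and pass to the retarded integral with Christ--Kiselev. That part is fine.

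The genuine gap is in your treatment of the double endpoint $q=\tilde q=2$, which you dismiss in one sentence by saying you would ``invoke the bilinear inhomogeneous Strichartz estimate of Keel--Tao applied to the frequency-localized dispersive bounds that underlie Theorem~\ref{thm:hstri}.'' This does not work, and identifying why is the whole point of the paper's Section 5.2. The dispersive bounds established for the homogeneous theorem (Proposition~\ref{prop:m-disper}, Lemma~\ref{lem:dispersive}) control only the \emph{diagonal} microlocalized pieces $Q_k\,dE_{\sqrt H}\,Q_k^*$. The full propagator $U^{\mathrm{high}}(t)\,U^{\mathrm{high}}(\tau)^*$ expands into a sum over all pairs $(k,k')$, and when you run the Keel--Tao bilinear argument you are forced to estimate the off-diagonal terms $Q_k\,dE_{\sqrt H}\,Q_{k'}^*$ with $k\neq k'$. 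For these the pointwise dispersive bound can fail because of conjugate points---that is exactly the obstruction the paper is designed to circumvent. What is actually needed is the microlocal classification of Lemma~\ref{poi} (pairs are ``near'', ``not-incoming-related'', or ``not-outgoing-related''), the one-sided dispersive estimates of Proposition~\ref{prop:Dispersive} valid only for $\tau<t$ (resp.\ $\tau>t$) in the non-incoming (resp.\ non-outgoing) case, and the ``either/or'' Lemma~\ref{either} which, combined with the untruncated $TT^*$ bound, recovers the retarded estimate even when Christ--Kiselev is unavailable. None of this appears in your proposal; without it the double-endpoint case has no proof.

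A secondary, smaller issue: in the low-frequency part you correctly flag that $H^{-1/2}\chi(\sqrt H)$ is unbounded near $\lambda=0$, but your suggested remedy is vague. The mechanism the paper actually uses is (i) the factor $e^{-(\rho-\delta)d(z,z')}$ coming from the small-$\lambda$ spectral measure bound \eqref{lbeans'}, together with (ii) the Kunze--Stein-type Lemma~\ref{lem:qq'}, which upgrades the $L^1\to L^\infty$ dispersive bound to $L^{\tilde r'}\to L^r$ without any Sobolev embedding; and the decay $(1+|t-\tau|)^{2\sigma-3+\epsilon}$ with $\sigma=1/2$ is integrable, so the bilinear form closes directly at $q=\tilde q=2$. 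You should make that concrete rather than appeal abstractly to ``refined low-frequency estimates.''
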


The proof of the inhomogeneous Strichartz estimate will be divided into two cases. The first case $q>\tilde{q}'$ is proved using the $TT^*$-method and the Christ-Kiselev lemma \cite{CK}. The second case when $q=\tilde{q}=2$ is more complicated to treat due to the failure of the Christ-Kiselev lemma and the usual dispersive estimate fails due to
the conjugate points. We overcome these difficulties  following the idea of Hassell and the last author \cite{HZ}. In this argument, we classify the microlocalized pseudo-differential operator via the wavefront set propagated along the bi-characteristic flow and parametrize the wavefront set off the diagonal case by a phase function with an unchanged sign. Finally we can show
some dispersive estimate in some special cases; see Proposition \ref{prop:Dispersive}  for details. Combining this with the $TT^*$-method again, we show the inhomogeneous Strichartz estimate when $q=\tilde{q}=2$.

\subsection{The small data global existence and Strauss conjecture}
We now apply the previous estimates to the nonlinear wave equation with small data. We introduce the class of nonlinearities: let $F_p\in C^1$ behaving like $\pm |u|^p$ or $\pm |u|^{p-1} u$, hence such that 
\begin{equation*}\label{eq-Fp}
|F_p(u)|+|u||F'_p(u)|\le C |u|^p,
\end{equation*}
for some constant $C>0$. Consider the family of nonlinear  equations 
\begin{equation}\label{NKG}
\begin{cases}
\partial_{t}^2u(t,z)-\Delta_g u(t,z)+m u(t,z)=F_p(u), \\ u(0)=u_0(z), ~\partial_tu(0)=u_1(z),
\end{cases}
\end{equation}
where the constant satisfies
\begin{equation}
m\geq-\rho^2:=-(n-1)^2/4.
\end{equation}

The problem under consideration belongs to the realm of the dichotomy between global existence vs blow-up for the nonlinear equation \eqref{NKG} with $m=0$ as investigated for the first time by F. John in \cite{John79} on the Euclidean space. 
John determined the critical power to be $p_S=1+\sqrt{2}$ for the problem when $n=3$ by
proving  global existence results for $p>1+\sqrt{2}$ and
blow-up results for $p<1+\sqrt{2}$. Later, Strauss \cite{Strauss81} 
conjectured that the critical power $p_c(n)=p_S(n)$ (above which global existence for small data holds) for other dimensions $n\ge 2$ should be the positive root of the quadratic equation $$(n - 1)p^2
- (n + 1)p - 2 = 0.$$
See \cite{Wang18} and the references therein for a complete account on the state of the art. On the real hyperbolic space $\mathbb{H}^n$,  
Metcalfe-Taylor \cite{MT1} gave a proof of small data global existence for \eqref{NKG} with $m=0$ and $p\geq5/3$ for dimension $n=3$ and then Anker-Pierfelice \cite{AP} proved global existence for the problem \eqref{NKG} with $m>-\rho^2$ and $p \in (1,1+\frac{4}{n-1}]$
where $n\geq2$.  Metcalfe-Taylor \cite{MT2} gave an alternative proof for $n=3$. Notice that  the spectrum of the Laplacian on $\mathbb{H}^n$
is contained in $[\rho^2,\infty)$; these results  are more like a nonlinear Klein-Gordon equation instead of a nonlinear wave equation.
  For the limit case $m=-\rho^2$, i.e., the shifted wave equation, Fontaine \cite{fontaine} was the first one to provide small data global existence for $n=2,3$ and $p \geq 2$. Anker-Pierfelice-Vallarino \cite{APV}
proved wider couples of Strichartz estimates and a stronger local well-posedness result for the nonlinear shifted wave equation.
The Strichartz estimate established in \cite{APV} could be applied to show small data global existence for any $p\in (1, 1+4/(n-1)]$, even though such results 
have not been proved explicitly in \cite{APV}; hence it illustrates the critical power of global existence holds for the shifted wave equation with small data $p_c(n)=1$.
This result on $\mathbb{H}^n$ is explicitly stated and proved by the first three authors \cite{SSW}. Tataru \cite{T} actually proved dispersive estimates, which are strong enough to ensure global results, as pointed out in \cite{SSW}. On Damek-Ricci spaces (which contain Riemannian symmetric spaces of rank one), Anker-Pierfelice-Vallarino \cite{APV2} prove also global results.
In \cite{SSW}, the authors also showed 
small data global existence result for \eqref{NKG} on a manifold with variable curvature under the assumption that $\mathrm{Spec}(-\Delta_g+m)\subset (c,+\infty)$ with $c>0$. The final remark of \cite{SSW} raised a question about the small data global existence for \eqref{NKG} with $p>1$ and $m=-\kappa\rho^2$ on a manifold with variable negative curvature 
with sectional curvatures $K\in [-\tilde{\kappa}, -\kappa]$ for some $\tilde{\kappa}\geq \kappa>0$. Our second result partially answers this problem. More precisely, we prove 

\begin{theorem}\label{GWP}
Let  $(M^\circ,g)$ be a non-trapping asymptotically hyperbolic manifold of dimension
$n\geq2$ and let  $\Delta_g$ be the Laplace-Beltrami operator on $(M^\circ, g)$ as in Theorem \ref{thm:hstri}. Let $\rho^2=(n-1)^2/4$ and $p \in (1,1+\frac{4}{n-1})$.   Then there exists a constant $\nu_1>0$ such that  the Cauchy problem
\begin{equation}\label{eqNLW}
\begin{cases}
\partial_{t}^2u-\Delta_g u-\rho^2 u=F_p(u), \quad (t,z)\in I\times M^\circ; \\ u(0)=\nu u_0(z),
~\partial_tu(0)=\nu u_1(z),
\end{cases}
\end{equation}
 has  global solution where $\nu \in (0, \nu_1]$ and $u_0\in  H^{\mu,0}(M^\circ), u_1\in  H^{\mu-1,-\epsilon}(M^\circ)$ defined in \eqref{space} for $\epsilon$ very small and $\mu>\frac{n+1}2(\frac12-\frac1{p+1})$.
\end{theorem}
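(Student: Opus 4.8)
The argument is a Banach fixed‑point scheme based on Theorems~\ref{thm:hstri} and~\ref{thm:ihstri}. Under the spectral hypotheses the shifted operator $H=-\Delta_g-\rho^2$ is nonnegative with no eigenvalue and no resonance at $0$, so $\cos(t\sqrt H)$ and $\frac{\sin(t\sqrt H)}{\sqrt H}$ are well defined, and $u$ solves \eqref{eqNLW} on $I=\R$ if and only if it is a fixed point of the map $\Phi$ given by
\[
\Phi(u)(t)=\nu\cos(t\sqrt H)u_0+\nu\,\frac{\sin(t\sqrt H)}{\sqrt H}u_1+\int_0^t\frac{\sin((t-s)\sqrt H)}{\sqrt H}F_p(u(s))\,ds .
\]
I will construct a complete metric space $X=X(\R\times M^\circ)$, cut out by finitely many global‑in‑time Strichartz norms, such that $\Phi$ sends the ball $\{\|u\|_X\le C\nu\}$ into itself and is a contraction there once $0<\nu\le\nu_1$, with $\nu_1$ small.

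\textbf{Choice of the norms.} Take the space exponent $r=p+1$, for which the threshold in the statement is exactly $s_e=\frac{n+1}2(\frac12-\frac1r)$. The hypothesis $p<1+\frac4{n-1}$ is equivalent to $s_e<\frac12$ and implies $(n-1)(\frac12-\frac1{p+1})<1$; hence $(q,p+1,\mu)\in\Lambda_e$ for a nonempty range of $q$, so $L^q_t(\R;L^{p+1}_z)$ is an admissible Strichartz norm for data of the given regularity. Since $H^{\mu,0}\hookrightarrow H^{\mu',0}$ and $H^{\mu-1,-\epsilon}\hookrightarrow H^{\mu'-1,-\epsilon}$ for every $\mu'\le\mu$ (on the high‑frequency sector $H\gtrsim1$), we may assume $\mu$ is as close to $s_e$ as we wish. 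The space $X$ will be the intersection of $L^q_t(\R;L^{p+1}_z)$ with a short list of further spaces $L^{q_i}_t(\R;L^{r_i}_z)$, $(q_i,r_i,\mu_i)\in\Lambda_w\cup\Lambda_e$ with $\mu_i\le\mu$, and possibly one auxiliary space carrying a few powers of $H$; which ones are needed is dictated by the nonlinear estimate below.

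\textbf{Linear estimates.} Applying Theorem~\ref{thm:hstri} to each component of $X$ (and the energy‑type bound of Proposition~\ref{energy} if the auxiliary derivative norm is used) gives
\[
\big\|\nu\cos(t\sqrt H)u_0+\nu\tfrac{\sin(t\sqrt H)}{\sqrt H}u_1\big\|_X\lesssim\nu\big(\|u_0\|_{H^{\mu,0}(M^\circ)}+\|u_1\|_{H^{\mu-1,-\epsilon}(M^\circ)}\big) .
\]
These estimates are global in $t$: at low frequencies the spectral gap of $H$ furnishes the integrability needed to take $L^q_t(\R)$ with arbitrary $q\ge2$ (the reason the critical power descends to $p_c=1$), while the small loss $\mu>s_e$ is localized to high frequencies. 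Applying Theorem~\ref{thm:ihstri} to each component, the Duhamel term obeys
\[
\Big\|\int_0^t\tfrac{\sin((t-s)\sqrt H)}{\sqrt H}F(s)\,ds\Big\|_X\lesssim\|F\|_{L^{\tilde q'}_t(\R;H^{\mu+\tilde\mu-1,0}_{\tilde r'}(M^\circ))}
\]
for a triple $(\tilde q,\tilde r,\tilde\mu)\in\Lambda_w\cup\Lambda_e$ still to be fixed; since $\mu$ may be taken near $s_e<\frac12$ and $\tilde\mu$ near its own (small) threshold, the high‑frequency order $\mu+\tilde\mu-1$ in the target can be kept very small, in fact negative.

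\textbf{Nonlinear estimate, contraction, and the main difficulty.} It remains to prove
\[
\|F_p(u)\|_{L^{\tilde q'}_t H^{\mu+\tilde\mu-1,0}_{\tilde r'}}\lesssim\|u\|_X^{p},\qquad \|F_p(u)-F_p(v)\|_{L^{\tilde q'}_t H^{\mu+\tilde\mu-1,0}_{\tilde r'}}\lesssim\big(\|u\|_X^{p-1}+\|v\|_X^{p-1}\big)\|u-v\|_X ,
\]
the second following from $|F_p(a)-F_p(b)|\lesssim(|a|+|b|)^{p-1}|a-b|$ (valid since $p\ge1$). Using $|F_p(u)|\lesssim|u|^p$, one treats the low‑frequency part of the target by boundedness (indeed smoothing) of $\chi(\sqrt H)$ on Lebesgue spaces, and the high‑frequency part by a Sobolev–Bernstein inequality for $H$ turning the negative power $H^{(\mu+\tilde\mu-1)/2}$ into a gain of Lebesgue integrability (or, if one prefers to keep $\mu+\tilde\mu-1>0$, by the fractional Leibniz rule for $H^{\sigma/2}$ applied to $F_p\in C^1$, absorbing the derivative in the auxiliary component of $X$); in either case one is reduced, after H\"older in $t$ and $z$, to a product of Strichartz norms $\prod_i\|u\|_{L^{q_i}_tL^{r_i}_z}^{\theta_i}$ with $\sum_i\theta_i=p$, each of which is then added to the definition of $X$. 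The crucial and delicate point is that the resulting list of constraints — every triple in $\Lambda_w\cup\Lambda_e$ with regularity index $\le\mu$, every Lebesgue exponent strictly above $2$, and $\mu+\tilde\mu-1$ small — is simultaneously solvable precisely in the range $p\in(1,1+\frac4{n-1})$, the endpoint being forced by the coefficient $\frac{n+1}2$ in $s_e$. This exponent bookkeeping, rather than any single inequality, is the heart of the proof. Granting it, $\Phi$ is a contraction of the ball $\{\|u\|_X\le C\nu\}$ for $\nu\le\nu_1$, and its unique fixed point there is the desired global solution of \eqref{eqNLW}; the hypothesis $\mu>\frac{n+1}2(\frac12-\frac1{p+1})$ is exactly what places the linear part of $\Phi(u)$ in $X$.
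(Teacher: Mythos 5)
The overall strategy (contraction mapping driven by the global Strichartz estimates) is the right one, and you correctly identify $r=p+1$ as the space exponent to use, but your proposal stops short at exactly the step that matters and leans on machinery the paper does not have. You leave $X$ as an unspecified intersection of Strichartz norms plus an "auxiliary space carrying a few powers of $H$," and to close the nonlinear estimate you invoke "a Sobolev--Bernstein inequality for $H$" or "the fractional Leibniz rule for $H^{\sigma/2}$." Neither of these is established in the paper, and the introduction explicitly notes that the usual Littlewood--Paley square function machinery is unavailable on this non-doubling manifold; that is precisely why the authors structure the whole argument around spectral-measure bounds rather than Besov/Bernstein calculus. You then acknowledge that "the resulting list of constraints \dots is simultaneously solvable \dots this exponent bookkeeping, rather than any single inequality, is the heart of the proof. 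Granting it \dots" --- in other words the crucial step is asserted, not proved.

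The paper's proof avoids all of this by a single clean choice: take $q=r=\tilde q=\tilde r=p+1$ and $\mu=\tilde\mu=\tfrac12$. Then $(p+1,p+1,\tfrac12)\in\Lambda_e$ exactly when $p<1+\tfrac4{n-1}$ (both the admissibility inequality $2/q\ge(n-1)(1/2-1/r)$ and $s_e<1/2$ reduce to this), and --- the key point --- the derivative weight $H^{(\mu+\tilde\mu-1)/2}$ in the inhomogeneous estimate is $H^0=\mathrm{Id}$, so Theorem~\ref{thm:ihstri} specializes to the derivative-free bound
\begin{equation*}
\Big\|\int_{\tau<t}\tfrac{\sin((t-\tau)\sqrt H)}{\sqrt H}F(\tau)\,d\tau\Big\|_{L^{p+1}_{t,z}}\lesssim\|F\|_{L^{(p+1)/p}_{t,z}},
\end{equation*}
which is \eqref{(p+1)-inh}. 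The nonlinear step then collapses to the purely algebraic identity $\||u|^p\|_{L^{(p+1)/p}_{t,z}}=\|u\|_{L^{p+1}_{t,z}}^p$, and the fixed point lives in the single space $L^{p+1}(\R^+\times M^\circ)$; no auxiliary norms, no Leibniz rule, no Bernstein. Your remark that $\mu+\tilde\mu-1$ can be made "in fact negative" heads in the wrong direction: negative order still requires a Sobolev embedding for $H$, whereas exactly zero requires nothing. So the gap is concrete: you do not establish the nonlinear estimate, and the route you sketch for it depends on unproven (and likely unavailable) harmonic-analysis input, whereas the intended argument makes the nonlinear estimate tautological by the choice $\mu=\tilde\mu=1/2$.
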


\begin{remark}
The assumption on the regularity of the initial data is not sharp. The usual investigations for small data global existence requires more care, see Wang \cite{Wang18}.

\end{remark}

Notice here that we do not reach the endpoint $p=1+\frac{4}{n-1}$. The reason is that there is a loss of derivatives in the inhomogeneous Strichartz estimates and so it is impossible to close the iteration in this latter case. In this case, one has to use another method, based on the strategy described in \cite{BSS}. We postpone this issue to a later work since the techniques are very different.\vspace{0.2cm}

\emph{ Notation.} We use $A\lesssim B$ to denote
$A\leq CB$ for some large constant C which may vary from line to
line and depend on various parameters, and similarly we use $A\ll B$
to denote $A\leq C^{-1} B$. We employ $A\sim B$ when $A\lesssim
B\lesssim A$. If the constant $C$ depends on a special parameter
other than the above, we shall denote it explicitly by subscripts.
For instance, $C_\epsilon$ should be understood as a positive
constant not only depending on $p, q, n$, and $M$, but also on
$\epsilon$. Throughout this paper, pairs of conjugate indices are
written as $p, p'$, where $\frac{1}p+\frac1{p'}=1$ with $1\leq
p\leq\infty$. \vspace{0.2cm}

\emph{ Organization of this paper. } Our paper is organized as follows. We recall the properties of the microlocalized spectral measure in Section 2. In Section 3, we define the microlocalized propagator and prove the energy estimate and the microlocalized dispersive estimate. We conclude this section by showing the microlocalized Strichartz estimate. We prove Theorem \ref{thm:hstri} in Section 4 and Theorem \ref{thm:ihstri} in Section 5. Finally, we prove the global existence of Theorem \ref{GWP} in the section 6.

 \vspace{0.2cm}

{\bf Acknowledgments:}\quad  We thank Jean-Marc Bouclet and Andrew Hassell for helpful discussions. Y. S. was partially supported by the Simons foundation. 
C. D. S. was supported by the NSF and the Simons foundation.
C. Wang was supported in part by NSFC 11971428 and National Support Program for Young Top-Notch Talents.
 J. Zhang was supported by  NSFC Grants (11771041, 11831004) and H2020-MSCA-IF-2017(790623). \vspace{0.2cm}

\section{The spectral measure }

In this section, we briefly review the key elements of the
microlocalized spectral measure, which was constructed and proved by Chen-Hassell
\cite [Theorem 1.3]{CH1,CH2}. This is an analogue of a result of  Hassell and the last author \cite[Proposition 1.5]{HZ} for the non-trapping asymptotically conic manifold. The property not only gives the decay of spectral measure 
in frequency but also captures the oscillatory behaviour of the 
spectral measure.

\begin{proposition}
\label{prop:localized spectral measure} Let $(M^\circ,g)$ and
$H=-\Delta_g-(n-1)^2/4$ be as in Theorem \ref{thm:hstri}. 
Then for low energy, i.e. $\lambda\leq 2$, the Schwartz kernel of the spectral measure $dE_{\sqrt{H}}(\lambda; z,z')$ satisfies 
\begin{equation}\label{lbeans}
dE_{\sqrt{H}}(\lambda; z,z')=\lambda\left((\rho_L\rho_R)^{\frac {n-1}2+i\lambda}a(\lambda;z,z')-(\rho_L\rho_R)^{\frac {n-1}2-i\lambda}a(-\lambda;z,z')\right),
\end{equation}
where $a\in \mathcal{C}^\infty([-2,2]_{\lambda}\times M_0^2)$
and $\rho_L$ and $\rho_R$ are respectively the boundary defining functions for the left and right boundary in the double space $M_0^2$. Furthermore, there holds 
\begin{equation}\label{lbeans'}
|dE_{\sqrt{H}}(\lambda; z,z')|\leq C \lambda^2(1+d(z,z')) e^{-(n-1)d(z,z')/2}.
\end{equation}

For the high energy, i.e. $\lambda\geq1/2$, there exists a finite pseudo-differential operator partition of the identity operator 
\begin{equation}\label{partition}
\begin{split}
\mathrm{Id}=\sum_{k=0}^{N}Q_k(\lambda),
\end{split}
\end{equation}
where the $Q_k$ are
uniformly bounded as operators on $L^2$ and $N$ is independent of $\lambda$, such that
\begin{equation}\label{bean}
(Q_k(\lambda)dE_{\sqrt{H}}(\lambda)Q_k^*(\lambda))(z,z')=\lambda^{n-1}\sum_{\pm}e^{\pm
i\lambda d(z,z')}a_{\pm}(\lambda;z,z')+b(\lambda;z,z'),
\end{equation}
where $d(\cdot, \cdot)$ is the Riemannian distance on $M^\circ$, and for any $\alpha$, there exists a constant $C_\alpha$ such that
the $a_\pm$ satisfies
\begin{equation}\label{beans}
|\partial_\lambda^\alpha a_\pm(\lambda;z,z')|\leq \begin{cases}
C_\alpha
\lambda^{-\alpha}(1+\lambda d(z,z'))^{-\frac{n-1}2},\quad d(z,z')\leq 1,\\
C_{\alpha}\lambda^{-\frac {n-1}2-\alpha}e^{-(n-1)d(z,z')/2},\quad d(z,z')\geq1,
\end{cases}
\end{equation}
and $b$ satisfies 
\begin{equation}\label{beans'}
| \partial_\lambda^\alpha b(\lambda;z,z')|\leq C_\alpha \lambda^{-K-\alpha} e^{-(n-1)d(z,z')/2}, \quad \forall \alpha,\, K>0.
\end{equation}
Moreover, if $(M^\circ,g)$ is in addition simply connected with nonpositive sectional curvatures, then the estimates above are true for spectral measure without
microlocalization, that is,  in this case we can take $\{Q_k(\lambda)\}$ to be the trivial partition of unity.
\end{proposition}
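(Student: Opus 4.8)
The plan is to deduce this proposition from the resolvent and spectral measure constructions of Chen-Hassell \cite{CH1,CH2}, which for the asymptotically hyperbolic setting refine the strategy of Hassell and the last author \cite{HZ}; we only indicate how the two displayed regimes arise and where the non-trapping hypothesis enters. The starting point is Stone's formula, which exhibits $dE_{\sqrt{H}}(\lambda)$ as $\frac{\lambda}{\pi i}$ times the jump of the resolvent $R(\lambda\pm i0)=(H-(\lambda\pm i0)^2)^{-1}$ across the continuous spectrum, so that everything reduces to a description of the Schwartz kernel of $R(\lambda\pm i0)$; one then treats the bounded range $\lambda\le 2$ and the high-energy range $\lambda\ge 1/2$ by different microlocal models.

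For the low-energy part I would work on the $0$-blown-up double space $M_0^2$ of Mazzeo-Melrose \cite{MM,M} and use the uniform low-energy resolvent construction of \cite{CH1}. On $M_0^2$ the kernel of $R(\lambda)$ is polyhomogeneous conormal; its leading boundary exponents at the left and right faces are forced to be $\frac{n-1}{2}\pm i\lambda$, the indicial roots of $H-\lambda^2$ at $\partial M$, and the accompanying amplitude extends smoothly to $\lambda=0$. The hypotheses that $-\Delta_g$ has no point spectrum and no resonance at the bottom $\rho^2$ of the continuous spectrum are precisely what rules out a pole of this family at $\lambda=0$, so that after forming the jump one obtains the representation \eqref{lbeans} with $a\in\mathcal{C}^\infty([-2,2]_{\lambda}\times M_0^2)$. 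The bound \eqref{lbeans'} then follows from elementary estimates: on $M_0^2$ one has $-\log(\rho_L\rho_R)\sim d(z,z')$ up to a bounded error and $(\rho_L\rho_R)^{(n-1)/2}\sim e^{-(n-1)d(z,z')/2}$, while subtracting the two terms in \eqref{lbeans} gains a factor $\lambda\bigl(1+|\log(\rho_L\rho_R)|\bigr)$ from $(\rho_L\rho_R)^{i\lambda}-(\rho_L\rho_R)^{-i\lambda}$ together with the smoothness of $a$ in $\lambda$; combined with the prefactor $\lambda$ this yields $\lambda^2(1+d(z,z'))e^{-(n-1)d(z,z')/2}$.

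For the high-energy part I would rescale $h=\lambda^{-1}\in(0,2]$ and study the semiclassical operator $h^2H$ with spectral parameter near $1$, constructing the outgoing and incoming semiclassical resolvents. Using that $(M^\circ,g)$ is non-trapping and staying away from the conjugate locus, the kernel of $R(\lambda\pm i0)$ is a nondegenerate semiclassical oscillatory integral with phase $\pm\lambda d(z,z')$ and amplitude of stationary phase type, of size $\lambda^{n-1}(1+\lambda d(z,z'))^{-(n-1)/2}$ when $d(z,z')\le 1$ and picking up the exponential gain $e^{-(n-1)d(z,z')/2}$, together with an extra $\lambda^{-(n-1)/2}$, when $d(z,z')\ge 1$, the exponential decay reflecting the hyperbolic-type volume growth; the smoothing error of the parametrix, being $O(h^\infty)=O(\lambda^{-\infty})$ and rapidly decaying in $d(z,z')$, is collected in $b$ and satisfies \eqref{beans'}, and $\partial_\lambda$ costs a factor $\lambda^{-1}$ since the amplitude is symbolic in $h=\lambda^{-1}$. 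The difficulty is that on a general such manifold there may be conjugate points, where the geodesic flow focuses and the kernel acquires a higher order Legendrian singularity not of the form \eqref{bean}; following \cite{HZ} I would remove this by inserting a finite pseudodifferential partition of unity $\mathrm{Id}=\sum_{k=0}^N Q_k(\lambda)$, uniformly bounded on $L^2$ with $N$ independent of $\lambda$, chosen fine enough on the relevant compact part of phase space that the microlocal supports of $Q_k$ and $Q_k^*$ are never conjugate to one another under the flow, so that each sandwiched measure $Q_k(\lambda)\,dE_{\sqrt{H}}(\lambda)\,Q_k^*(\lambda)$ takes exactly the form \eqref{bean} with symbols $a_\pm$ satisfying \eqref{beans} and remainder $b$ satisfying \eqref{beans'}. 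If in addition $(M^\circ,g)$ is simply connected with nonpositive sectional curvatures, the Cartan-Hadamard theorem gives no conjugate points anywhere and $d(z,z')$ is smooth off the diagonal, so the clean form holds globally and the $Q_k$ may be taken trivial.

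I expect the high-energy microlocal step to be the main obstacle: constructing the semiclassical resolvent on a manifold that may carry conjugate points, and isolating the conjugate point contributions through the pseudodifferential partition so that each microlocalized piece is a genuine nondegenerate semiclassical FIO carrying the single phase $\pm\lambda d(z,z')$. This is where the non-trapping hypothesis is essential, in order to propagate estimates to all times and keep the remainders $O(\lambda^{-\infty})$, and it is precisely the content of \cite{CH2}; by contrast the low-energy analysis and the nonpositive curvature simplification are comparatively soft.
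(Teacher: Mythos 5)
The paper does not actually prove this proposition; it is quoted as a review of Chen--Hassell \cite[Theorem 1.3]{CH1,CH2} (itself modeled on \cite[Proposition 1.5]{HZ} in the asymptotically conic case), so the ``official'' proof is simply a citation. Your outline is a faithful reconstruction of the Chen--Hassell construction that underlies that citation: Stone's formula reducing to the outgoing/incoming resolvents, the uniform low-energy analysis on $M_0^2$ with indicial roots $\tfrac{n-1}{2}\pm i\lambda$ and the no-eigenvalue/no-resonance hypothesis guaranteeing regularity at $\lambda=0$, the factor-of-$\lambda$ gain and the $\log(\rho_L\rho_R)\sim -d(z,z')$ bookkeeping giving \eqref{lbeans'}, the semiclassical parametrix in $h=\lambda^{-1}$ at high energy with the pseudodifferential partition inserted to separate conjugate points (non-trapping being needed to propagate and to make the error $O(\lambda^{-\infty})$), and the Cartan--Hadamard simplification. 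This matches what the cited references do, so the proposal is correct and takes essentially the same route the paper implicitly relies on.
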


\begin{remark}
For example, a Cartan-Hadamard manifold is a simply connected manifold with nonpositive sectional curvatures, hence we have the estimates above 
without microlocalization. The nonpositive sectional curvatures imply that the manifold is non-trapping and has no conjugate points.
\end{remark}

Next we show an inequality for an integral operator which is similar to a result of Anker-Pierfelice-Vallarino \cite{APV} on $\mathbb{H}^n$.  
This is close to a non-Euclidean feature of hyperbolic space related to the Kunze-Stein phenomenon
\cite{KS}. 

\begin{lemma}\label{lem:qq'}
Let $M^\circ$ be the manifold as in Theorem \ref{thm:hstri} and let the kernel $K$ satisfy the pointwise bound
\begin{equation}\label{ker}
|K(z,z')|\leq e^{-\rho_\delta d(z,z')}, \quad \rho_\delta=\rho-\delta=(n-1)/2-\delta,
\end{equation}
then for any $q\in (2,\infty]$, there exists a constant $C$ and $0<\delta_0(q):=(n-1)(\frac12-\frac1q)$ such that
\begin{equation}\label{est:qq'}
\big\|\int_{M^\circ} K(z,z') f(z') dg(z')\big\|_{L^q(M^\circ)}\leq C \|f\|_{L^{q'}(M^\circ)}
\end{equation}
holds for all $0<\delta< \delta_0$.

\end{lemma}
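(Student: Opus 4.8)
The plan is to estimate the integral operator $T f(z) = \int_{M^\circ} K(z,z') f(z')\, dg(z')$ by reducing to a convolution-type bound and exploiting the exponential volume growth of $M^\circ$ together with the gap $\delta_0(q) - \delta > 0$. Since $(M^\circ,g)$ is asymptotically hyperbolic, the volume of a geodesic ball $B(z,R)$ grows like $e^{(n-1)R}$ for large $R$, uniformly in $z$ (this follows from the collar form \eqref{metric} away from a compact set, and on the compact part the bound is trivial). First I would record the pointwise kernel bound $|K(z,z')| \le e^{-\rho_\delta d(z,z')}$ with $\rho_\delta = \rho - \delta$ and observe that for the borderline exponent $q = \infty$ the claim is immediate: $\|Tf\|_{L^\infty} \le \sup_z \int e^{-\rho_\delta d(z,z')}\,dg(z') \cdot \|f\|_{L^1}$, and the integral is finite because $\rho_\delta > 0$ so the exponential decay of the kernel beats the $e^{(n-1)d}$ volume growth once $\delta < \rho$; more precisely the relevant integral converges as long as $\rho_\delta > \rho$ fails — wait, we need to be careful here, so I would instead argue by Schur-type interpolation rather than the crude $L^1 \to L^\infty$ bound.

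The cleaner route is the following. For $q \in (2,\infty)$ with conjugate $q'$, I want $\|Tf\|_{L^q} \lesssim \|f\|_{L^{q'}}$. By duality and symmetry of the kernel bound it suffices to handle $q \ge 2$, i.e. $q' \le 2 \le q$. I would split $K = K\mathbf{1}_{d(z,z')\le 1} + K\mathbf{1}_{d(z,z')> 1} =: K_0 + K_\infty$. The near-diagonal piece $K_0$ has a bounded kernel supported where $d(z,z')\le 1$, so by Young's inequality on the (locally doubling) manifold it maps $L^{q'} \to L^q$ for any $q' \le q$, with no use of the decay. For the far piece $K_\infty$, the key is the Kunze–Stein-type phenomenon: on a space with exponential volume growth, convolution with a radial kernel $k(d(z,z'))$ maps $L^{q'} \to L^q$ provided $\int_0^\infty k(r)\, e^{-\rho r}\, (\text{something}) \, dr$ — here I would follow the Anker–Pierfelice–Vallarino argument \cite{APV} on $\mathbb{H}^n$ and its adaptation: one dominates $Tf$ by the convolution $|f| * \Phi_\delta$ where $\Phi_\delta(z,z') = e^{-\rho_\delta d(z,z')}$, applies the Herz-type / Kunze–Stein criterion which asserts $\|\,|f|*\Phi_\delta\,\|_{L^q} \lesssim \|\Phi_\delta\|_{*} \|f\|_{L^{q'}}$ with the relevant norm of $\Phi_\delta$ being finite exactly when $\int_0^\infty e^{-\rho_\delta r} \varphi_0(r)^{2/q} e^{(n-1)r}\,dr < \infty$, where $\varphi_0$ is the spherical function satisfying $\varphi_0(r) \sim (1+r)e^{-\rho r}$. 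Plugging in, the integrand behaves like $(1+r)^{2/q} e^{(\delta - \delta_0(q)) r}$, which is integrable precisely because $\delta < \delta_0(q) = (n-1)(\tfrac12 - \tfrac1q)$. This is the computation that pins down the threshold $\delta_0(q)$.

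The main obstacle is that the genuine Kunze–Stein inequality is a property of semisimple Lie groups / symmetric spaces, not of general asymptotically hyperbolic manifolds, so I cannot invoke it as a black box. What I would actually do is prove the weaker, purely metric-measure statement that suffices here: since the kernel is \emph{pointwise} dominated by the radial function $e^{-\rho_\delta d}$, it is enough to bound the operator with that radial kernel, and for a radial dominating kernel one only needs a Schur test with carefully chosen weights. Concretely I would seek weights $w(z) = e^{-\beta \langle z\rangle}$ (with $\langle z \rangle$ a regularized distance to a fixed basepoint) and verify $\int e^{-\rho_\delta d(z,z')} w(z')^{-r'/r}\,dg(z') \lesssim w(z)^{-1}$ and the dual inequality, where $r = q$; the exponential volume growth $dg \lesssim e^{(n-1)\langle z'\rangle}\,d\langle z'\rangle\, d\omega$ turns both conditions into one-dimensional integrals of the form $\int_0^\infty e^{-\rho_\delta r} e^{(n-1)r} e^{-\gamma r}\,dr < \infty$, solvable for a suitable $\gamma$ exactly under $\delta < (n-1)(\tfrac12 - \tfrac1q)$. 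Handling the compact/bounded-geometry part of $M^\circ$ separately (where everything is standard) and gluing with a partition of unity completes the argument; the only real work is the bookkeeping of the spherical-function power $\varphi_0^{2/q}$ versus the crude volume bound — using $\varphi_0(r)^{2/q}$ rather than $e^{-2\rho r/q}$ is what yields the sharp $\delta_0(q)$, and I would need the asymptotically hyperbolic analogue of the bound $\varphi_0(r) \lesssim (1+r) e^{-\rho r}$, which should follow from the structure of the spectral measure at low energy recorded in \eqref{lbeans'}.
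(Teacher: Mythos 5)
Your instinct that the sharp threshold $\delta_0(q)=(n-1)(\tfrac12-\tfrac1q)$ is a Kunze--Stein phenomenon is correct, and so is your worry that Kunze--Stein is a feature of symmetric spaces and not of an arbitrary asymptotically hyperbolic manifold. The gap is that the Schur-test-with-weights workaround you then propose cannot recover this threshold. A Schur/Riesz--Thorin argument, interpolating between $L^1\to L^{q/2}$ and $L^{(q/2)'}\to L^\infty$ boundedness, reduces to checking $\sup_z\int e^{-\frac q2\rho_\delta d(z,z')}\,dg(z')<\infty$, which against the volume growth $e^{(n-1)r}$ forces $\tfrac q2\rho_\delta>n-1$, i.e.\ $\delta<(n-1)(\tfrac12-\tfrac2q)$. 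That is strictly weaker than $\delta_0(q)$ and is in fact vacuous for $q\le4$. No pointwise test of Schur type can see the extra gain encoded in the spherical-function weight $(1+r)e^{-\rho r}$ (as opposed to the crude volume factor $e^{(n-1)r}$), because that gain is an averaging effect, not a pointwise decay statement. As a side note, the Kunze--Stein integral you wrote down has the exponents transposed: in the criterion of \cite{APV} the kernel enters to the power $q/2$ and the spherical-function weight $(1+r)e^{-\rho r}$ to the first power; with your placement of the exponents the integral actually diverges, and your claimed integrand $(1+r)^{2/q}e^{(\delta-\delta_0)r}$ results from a compensating sign error.

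The paper circumvents the obstacle not by globalizing Kunze--Stein but by localizing on the blown-up double space $M_0^2$. It splits $K=K\chi_O+K\chi_{M_0^2\setminus O}$, where $O$ is a neighborhood of the front face. Away from the front face, $d(z,z')\asymp-\log(xx')$ and \eqref{est:qq'} reduces to an explicit one-variable integral in the boundary defining functions. Near the front face, $O$ is covered by finitely many sets $O_i$, each of which is carried by a chart diffeomorphism $\Phi_i$ into $(\mathbb H^n)_0^2$; the pointwise bound $|K|\le e^{-\rho_\delta d}$ is preserved under $\Phi_i$ up to constants, and the transferred operator is then bounded $L^{q'}(\mathbb H^n)\to L^{q}(\mathbb H^n)$ by the Anker--Pierfelice--Vallarino lemma with exactly the threshold $\delta_0(q)$. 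This localization-and-transfer to the model space $\mathbb H^n$ is the idea missing from your proposal: it lets one invoke the sharp hyperbolic-space estimate where it is available rather than attempting to manufacture a substitute on the general manifold.
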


\begin{proof} The proof is a variant of the argument in \cite[Section 4.2]{CH2} where the estimates of the spectral measure
are established. We show that there exists a constant $C$ such that
\begin{equation}\label{vest:qq'}
\begin{split}
\Big|\int_{M^\circ\times M^\circ} K(z,z') f(z')  h(z)dg(z')dg(z)\Big|\leq C \|f\|_{L^{q'}}\|h\|_{L^{q'}}.
\end{split}
\end{equation}
We split the left hand side into several pieces, restricting the kernel to different regions. 
Recall that $M$ is the compactification of $M^\circ$ and $M^2_0$ is the blow up space. 
Let $O$ be a neighbourhood of the front face FF in $M^2_0$.  We write 
$$K(z,z')=K(z,z') \chi_{O} +K(z,z') \chi_{M^2_0\setminus O},$$
where $\chi$ is the usual bump function. We first consider \eqref{vest:qq'} with the kernel $K(z,z') \chi_{M^2_0\setminus O}$.
Since the other cases are similar, we only prove  \eqref{vest:qq'} when both $z,z'$ are near the boundary $\{x=0\}$ of $M$ where $x$ is the boundary defining function. 
Away from the front face, the distance $d(z,z')$ is comparable to $-\log(xx')$.
Since $q>2$ and  $0<\delta\ll 1$, we obtain
\begin{equation}
\begin{split}
&\Big|\int_{\{x, x' \leq \eta \}\cap M^2_0\setminus O} K(z,z') f(z')  h(z)dg(z')dg(z)\Big|
\\& \lesssim \int_{x, x' \leq \eta} (xx')^{q\rho_\delta}\frac{dx'}{x'^{n}}\frac{dx}{x^{n}} \|f\|_{L^{q'}}\|h\|_{L^{q'}}\leq C_{\eta} \|f\|_{L^{q'}}\|h\|_{L^{q'}}.
\end{split}
\end{equation}

Now consider the kernel $K(z,z') \chi_{O}$ near the front face. Further decompose the set $O$ into subsets $O_i\subset M^2_0$, 
$$O_i=\{(x,x',y,y'): x, x'\leq \eta; \quad d_h(y,y_i), d_h(y',y_i)\leq \eta\}$$
for some $y_i\in \partial M$ where the distance $d_h$ is measured by the metric $h(0)$ on $\partial M$. Use the local coordinates  $(x,y)$
on $M$ which is near $(0,y_i)\in \partial M$ to define a map $\phi_i$ such that
\begin{equation}
\phi_i:  U_i\mapsto U_i'\subset \mathbb{H}^{n},
\end{equation}
where $U_i=\{(x,y)\in M :x\leq \eta, d_h(y,y_i)\leq \eta\}$ and $U'_i$ is a neighbourhood of the origin $(0,0)$ (using the upper half-space model) in the real hyperbolic space $\mathbb{H}^{n}$.
The map $\phi_i$ induces a diffeomorphism $\Phi_i$
\begin{equation}
\Phi_i:  O_i\mapsto O_i',
\end{equation}
where $O'_i$ is a subset of  $(\mathbb{B}^{n})_0^2$, the double space for $\mathbb{H}^n$. Let $r$ be the geodesic distance on $\mathbb{H}^{n}$,
then the kernel satisfies
\begin{equation}\label{ker'}
|\phi_i \circ K(z,z')\chi_{O_i} \circ \phi_i^{-1}|\leq C e^{-\rho_\delta r}.
\end{equation}

We need the following lemma proved in \cite[Lemma 5.1]{APV}:
\begin{lemma} Let $q\geq 2$,
\begin{equation}
\|f\ast \kappa\|_{L^q(\mathbb{H}^{n})}\leq C_q \|f\|_{L^{q'}(\mathbb{H}^n)}\left(\int_0^\infty (\sinh r)^{n-1}(1+r)e^{-(n-1)r/2}|\kappa(r)|^{q/2}dr\right)^{2/q}.
\end{equation}
\end{lemma}
Using this lemma with $\kappa(r)=e^{-\rho_\delta r}$ and the fact 
\begin{equation}
\begin{split}
&\int_0^\infty (\sinh r)^{n-1}(1+r)e^{-(n-1)r/2}|\kappa(r)|^{q/2}dr
\\&\leq \int_0^\infty (1+r) e^{-\frac{n-1}2(\frac q2-1)r}e^{\frac {q\delta}2r}dr<\infty,\quad 0<\delta<\delta_0,
\end{split}
\end{equation} we obtain that the integral operator with kernel \eqref{ker'} is bounded from $L^{q'}(\mathbb{H}^n)$ into $L^{q}(\mathbb{H}^n)$.
Therefore it shows the integral operator with the kernel $K(z,z') \chi_{O}$ is bounded from $L^{q'}(U_i)$ to $L^{q}(U_i)$ since $\phi_i$ are bounded and invertible maps from 
$L^{q'}(U_i)$ to $L^{q'}(U'_i)$.

\end{proof}

\section{Dispersive estimate and microlocalized Strichartz estimate}

In this section, we define the microlocalized wave propagator and prove the  microlocalized $L^2$-estimates and the dispersive estimates.
As a final conclusion of this section, we prove  microlocalized Strichartz estimates.

\subsection{Microlocalized wave propagator and $L^2$-estimates}
We first define the microlocalized wave propagator. Denote $U(t)=e^{it\sqrt{H}}$. For any $\sigma\in\R$, we define 
\begin{equation}
\leftidx{^{\sigma}}U(t)=e^{it\sqrt{H}}H^{-\frac\sigma 2}.
\end{equation}
In the following application, in particular, we are interested in the cases $\sigma=0$, $\sigma=1/2$ and $\sigma=1$.
Choose $\chi \in C_c^\infty(\R)$,
such that $\chi(\lambda) = 1$ for $\lambda \leq 1$ and vanishes when $\lambda\geq2$. Then we define
\begin{equation}\label{Ulh}
\leftidx{^{\sigma}}U^{\mathrm{low}}(t) = \int_0^\infty e^{it\lambda} \chi(\lambda)\lambda^{-\sigma}
dE_{\sqrt{H}}(\lambda) , \quad \leftidx{^{\sigma}}U^{\mathrm{high}}(t) =
\int_0^\infty e^{it\lambda} (1 - \chi)(\lambda)
\lambda^{-\sigma}dE_{\sqrt{H}}(\lambda).
\end{equation}
Let $\varphi\in C_c^\infty([1/2, 2])$ and take values in
$[0,1]$ such that $1=\sum_{j\in\Z}\varphi(2^{-j}\lambda)$ for any $\lambda \neq 0$.
We define
\begin{equation}\label{Ul-mic}
\begin{split}
\leftidx{^{\sigma}}U^{\mathrm{low}}_j(t) &= \int_0^\infty e^{it\lambda}
\varphi(2^{-j}\lambda)\chi(\lambda) \lambda^{-\sigma}dE_{\sqrt{H}}(\lambda),\\
\leftidx{^{\sigma}}U^{\mathrm{high}}_j(t) &= \int_0^\infty e^{it\lambda}
\varphi(2^{-j}\lambda)(1 - \chi)(\lambda)
\lambda^{-\sigma}dE_{\sqrt{H}}(\lambda).
\end{split}
\end{equation}
For the high-energy operator partition of
identity operator $Q_k(\lambda)$ in Proposition \ref{prop:localized spectral measure}, we further define
\begin{equation}\label{Uh-mic}
\begin{gathered}
\leftidx{^{\sigma}}U^{\mathrm{high}}_{j,k}(t) = \int_0^\infty e^{it\lambda} \varphi(2^{-j}\lambda)(1 -
\chi)(\lambda)\lambda^{-\sigma}Q_{k}(\lambda)
dE_{\sqrt{H}}(\lambda),~ 0\leq k\leq N.
\end{gathered}
\end{equation}
The above definition of the operator is well-defined. Indeed, we have
\begin{proposition}[$L^2$-estimates]\label{energy} Let $\leftidx{^{\sigma}}U^{\mathrm{low}}_j(t) $ and $\leftidx{^{\sigma}}U^{\mathrm{high}}_{j,k}(t) $ be defined in \eqref{Ul-mic} and  \eqref{Uh-mic}.
Then there exists a constant $C$ independent of $t, j, k$ such that
\begin{equation}\label{L2-est}
\begin{split}
\|\leftidx{^{\sigma}}U^{\mathrm{low}}_j(t)\|_{L^2\rightarrow L^2}\leq C 2^{-\sigma j},\quad
\|\leftidx{^{\sigma}}U^{\mathrm{high}}_{j,k}(t)\|_{L^2\rightarrow L^2}\leq C2^{-\sigma j}
\end{split}
\end{equation}
 for all $k\geq 0,
j\in\Z$.
\end{proposition}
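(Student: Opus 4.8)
The plan is to reduce the $L^2 \to L^2$ bound to a statement about the operator norm of the spectral measure localized to a dyadic frequency band, and then invoke the almost-orthogonality of the partition $\{Q_k(\lambda)\}$ together with the $L^2$-boundedness of the spectral projections. First I would recall that $E_{\sqrt H}(\cdot)$ is a projection-valued measure, so for any bounded Borel function $m$ supported in an interval $[2^{j-1},2^{j+1}]$ one has, by the spectral theorem, $\|\int_0^\infty m(\lambda)\,dE_{\sqrt H}(\lambda)\|_{L^2 \to L^2} = \|m\|_{L^\infty}$. Applying this with $m(\lambda) = e^{it\lambda}\varphi(2^{-j}\lambda)\chi(\lambda)\lambda^{-\sigma}$ immediately gives the first inequality in \eqref{L2-est}: since $|e^{it\lambda}| = 1$, $0 \le \varphi, \chi \le 1$, and on the support of $\varphi(2^{-j}\cdot)$ we have $\lambda \sim 2^j$, so $\lambda^{-\sigma} \lesssim 2^{-\sigma j}$, whence $\|m\|_{L^\infty} \le C 2^{-\sigma j}$ with $C$ independent of $t$ and $j$. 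This also confirms that the operators in \eqref{Ul-mic}, \eqref{Ulh} are well-defined (the spectral integrals converge in the strong operator topology because the integrands are bounded Borel functions).

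For $\leftidx{^{\sigma}}U^{\mathrm{high}}_{j,k}(t)$ the presence of $Q_k(\lambda)$ breaks the direct spectral-calculus argument, since $Q_k(\lambda)$ does not commute with $dE_{\sqrt H}(\lambda)$ and the integrand is no longer a scalar function of $\sqrt H$. Here I would write $\leftidx{^{\sigma}}U^{\mathrm{high}}_{j,k}(t) = Q_k \big(\text{something}\big)$ heuristically but more carefully use the following: for $\lambda$ in the support of $\varphi(2^{-j}\cdot)(1-\chi)$, split $dE_{\sqrt H}(\lambda) = dE_{\sqrt H}(\lambda) \, \mathbf 1_{[2^{j-1},2^{j+1}]}(\sqrt H)$, and note $Q_k(\lambda)$ is uniformly bounded on $L^2$ by Proposition \ref{prop:localized spectral measure}. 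The cleanest route is a $TT^*$-type estimate: for $f \in L^2$,
\[
\big\| \leftidx{^{\sigma}}U^{\mathrm{high}}_{j,k}(t) f \big\|_{L^2}^2 = \Big\langle \int e^{it\lambda}\varphi(2^{-j}\lambda)(1-\chi)(\lambda)\lambda^{-\sigma} Q_k(\lambda)\, dE_{\sqrt H}(\lambda) f, \ \leftidx{^{\sigma}}U^{\mathrm{high}}_{j,k}(t) f \Big\rangle,
\]
and then use that $dE_{\sqrt H}$ is a positive operator-valued measure together with $\|Q_k(\lambda)\|_{L^2\to L^2} \le C$ and Cauchy–Schwarz in the measure $\langle dE_{\sqrt H}(\lambda) f, f\rangle$ to bound this by $C^2 \sup_{\lambda \in \operatorname{supp}\varphi(2^{-j}\cdot)} \lambda^{-2\sigma} \, \|\mathbf 1_{[2^{j-1},2^{j+1}]}(\sqrt H) f\|_{L^2}^2 \le C^2 2^{-2\sigma j} \|f\|_{L^2}^2$. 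Alternatively, if one prefers to avoid delicacy with the non-commuting $Q_k$, one can absorb $Q_k(\lambda)$ into the source/target by writing $\leftidx{^{\sigma}}U^{\mathrm{high}}_{j,k}(t) = Q_k \circ \leftidx{^{\sigma}}U^{\mathrm{high}}_{j}(t) \circ (\text{uniform correction})$ only when $Q_k$ is $\lambda$-independent; in the genuinely $\lambda$-dependent case the positive-measure Cauchy–Schwarz above is the robust argument.

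The main obstacle, then, is the handling of the $\lambda$-dependent microlocalizers $Q_k(\lambda)$: one must exploit that they are uniformly bounded operators (with bound independent of $\lambda$ and $k$, per Proposition \ref{prop:localized spectral measure}) without being able to pull them out of the spectral integral as a fixed operator. I expect the positive-operator-valued Cauchy–Schwarz inequality
\[
\Big\| \int g(\lambda)\, T(\lambda)\, dE(\lambda) f \Big\|_{L^2} \le \Big(\sup_\lambda \|T(\lambda)\|_{L^2\to L^2}\Big)\, \|g\|_{L^\infty}\, \|f\|_{L^2}
\]
(valid when $dE$ is a projection-valued measure and the integral is over the support of $g$, proved by discretizing $\lambda$, using orthogonality of the increments $E(\Delta_i)$, and passing to the limit) to be the key technical point; once it is in place, plugging in $g(\lambda) = e^{it\lambda}\varphi(2^{-j}\lambda)(1-\chi)(\lambda)\lambda^{-\sigma}$ and $T(\lambda) = Q_k(\lambda)$ gives exactly $C 2^{-\sigma j}$ with $C$ independent of $t, j, k$, and the proof concludes.
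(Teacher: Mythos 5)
Your low-frequency argument is fine and in fact slightly cleaner than the paper's, since with no $Q_k$ present the spectral theorem gives $\|\int m(\lambda)\,dE_{\sqrt H}(\lambda)\|_{L^2\to L^2}=\|m\|_{L^\infty(\mathrm{spec})}\lesssim 2^{-\sigma j}$ directly. The high-frequency argument, however, has a genuine gap: the ``positive-operator-valued Cauchy--Schwarz'' inequality
\[
\Big\| \int g(\lambda)\,T(\lambda)\,dE(\lambda) f \Big\|_{L^2} \le \Big(\sup_\lambda \|T(\lambda)\|\Big)\,\|g\|_{L^\infty}\,\|f\|_{L^2}
\]
that you identify as the key technical point is \emph{false} when $T(\lambda)$ genuinely depends on $\lambda$. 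Discretizing does not help: the increments $E(\Delta_i)f$ are mutually orthogonal, but after applying the different operators $T(\lambda_i)$ the vectors $T(\lambda_i)E(\Delta_i)f$ need not be orthogonal, and the naive triangle inequality only gives a bound of order $\sqrt{N}\,\|f\|$ where $N$ is the number of cells, which diverges as the partition refines. (A two-dimensional example where $T(\lambda_1)e_1=T(\lambda_2)e_2=e$ already loses a factor $\sqrt 2$.) Your first ``$TT^*$-type'' sketch runs into the same problem: once you move $Q_k(\lambda)^*$ to the other side of the inner product the second argument acquires a $\lambda$-dependence, so Cauchy--Schwarz in the scalar measure $\langle dE_{\sqrt H}(\lambda)f,f\rangle$ no longer applies.

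What the paper does instead --- and what you need --- is two ingredients you do not have. First, the $TT^*$ identity from \cite[Lemma~5.3]{HZ}, which says that
\[
\Big(\int m(\lambda)Q_k(\lambda)\,dE_{\sqrt H}(\lambda)\Big)\Big(\int m(\lambda)Q_k(\lambda)\,dE_{\sqrt H}(\lambda)\Big)^* = \int |m(\lambda)|^2\,Q_k(\lambda)\,dE_{\sqrt H}(\lambda)\,Q_k(\lambda)^*.
\]
This is the rigorous way to exploit the orthogonality of the spectral increments when microlocalizers are present: it collapses the double $\lambda$-integral to a single one \emph{after} squaring, with the $Q_k$'s sitting on the outside. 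Second, an integration by parts in $\lambda$ to trade the measure $dE_{\sqrt H}(\lambda)$ for the cumulative spectral projection $E_{\sqrt H}(\lambda)$, which has operator norm at most $1$. The resulting integrand has operator norm $O(\lambda^{-1-2\sigma})$ uniformly (using that $Q_k(\lambda)$ and $\lambda\partial_\lambda Q_k(\lambda)$ are uniformly bounded on $L^2$, which the paper draws from \cite[Corollary~3.3]{HZ} and the construction in \cite[Section~6.1]{CH2}), and integrating over the dyadic interval of length $\sim 2^j$ gives $\|U^{\mathrm{high}}_{j,k}U^{\mathrm{high},*}_{j,k}\|_{L^2\to L^2}\lesssim 2^{-2\sigma j}$, hence $\|U^{\mathrm{high}}_{j,k}\|_{L^2\to L^2}\lesssim 2^{-\sigma j}$. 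Without the $TT^*$ collapse and the integration by parts, there is no way to make the uniform bound on $\|Q_k(\lambda)\|$ do the work you want it to.
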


\begin{remark}
The estimate of $\leftidx{^{\sigma}}U^{\mathrm{low}}_j(t)$ will not be used in the following proofs.
In the following argument, we only need estimates of $\leftidx{^{\sigma}}U^{\mathrm{high}}_{j,k}(t)$ for the interpolation argument.
\end{remark}

\begin{proof} The proof essentially follow the argument in \cite{HZ, Zhang} in which Hassell and the last author considered the cases of asymptotically conic manifolds. One also can find a modified version in \cite{Chen} on the asymptotically hyperbolic setting. We here outline the proof for the convenience of the reader.  \vspace{0.2cm}

We first show that the above definition of the operator is well-defined.  To this end, it suffices
to show the above integrals in the definitions are well-defined over any compact
dyadic interval in $(0, \infty)$. Let $A(\lambda) = e^{it\lambda}
\chi(\lambda) \varphi(2^{-j}\lambda)\lambda^{-\sigma}$ or $
A(\lambda) =e^{it\lambda}\varphi(2^{-j}\lambda)(1-\chi)(\lambda)\lambda^{-\sigma}
Q_{k}(\lambda)$. Then $A(\lambda)$ is compactly supported in $[a,b]$ with $a=2^{j-1}$ and $b=2^{j+1}$
and $\mathcal{C}^1$ in $\lambda\in (0,\infty)$. After integrating by parts, we see that the
integral 
$$
\int_a^b A(\lambda) dE_{\sqrt{H}}(\lambda)
$$
is given by
\begin{equation}\label{mean}
E_{\sqrt{H}}(b) A(b) - E_{\sqrt{H}}(a)
A(a) - \int_a^b \frac{d}{d\lambda} A(\lambda)
E_{\sqrt{H}}(\lambda) \, d\lambda.
\end{equation}
From the construction of the pseudo-differential operator $Q_k(\lambda)$ in \cite[Section 6.1]{CH2}, 
similarly as \cite[Corollary 3.3]{HZ}, we can show $Q_k(\lambda)$ and each operator $\lambda
\partial_\lambda Q_k(\lambda)$ is bounded on $L^2(M^\circ)$
uniformly in $\lambda$. Then this means that the integrals are well-defined over any dyadic compact
interval in $(0, \infty)$, hence the operators $U^{\mathrm{low}}_j(t)$ and $U^{\mathrm{high}}_{j,k}(t)$ are
well-defined. 

Next we show these operators are bounded on $L^2$. We
only consider $\leftidx{^{\sigma}}U^{\mathrm{high}}_{j,k}(t)$ since the other is handled in the same way. We have by  \cite[Lemma 5.3]{HZ},
\begin{equation}\begin{gathered}
\leftidx{^{\sigma}}U^{\mathrm{high}}_{j,k}(t) \leftidx{^{\sigma}}U^{\mathrm{high}}_{j,k}(t)^* = \int   (1-\chi)^2(\lambda) \varphi\big(
\frac{\lambda}{2^j} \big) \varphi\big( \frac{\lambda}{2^j} \big)\lambda^{-2\sigma}
Q_k(\lambda) dE_{\sqrt{H}}(\lambda) Q_k(\lambda)^* \\
= -\int \frac{d}{d\lambda} \Big( (1-\chi)^2(\lambda) \varphi\big(
\frac{\lambda}{2^j} \big) \varphi\big( \frac{\lambda}{2^j} \big)
Q_k(\lambda) \lambda^{-2\sigma}\Big) E_{\sqrt{H}}(\lambda) Q_k(\lambda)^*  \\
- \int  (1-\chi)^2(\lambda) \varphi\big( \frac{\lambda}{2^j} \big)
\varphi\big( \frac{\lambda}{2^j} \big)\lambda^{-2\sigma} Q_k(\lambda)
E_{\sqrt{H}}(\lambda) \frac{d}{d\lambda}
Q_k(\lambda)^*.
\end{gathered}\label{Uijk}\end{equation}
On one hand, we note that this is independent of $t$ and also recall that $Q_k(\lambda)$ and each operator $\lambda
\partial_\lambda Q_k(\lambda)$ is bounded on $L^2(M^\circ)$
uniformly in $\lambda$. On the other hand,  the
integrand is a bounded operator on $L^2$, with an operator bound of
the form $C\lambda^{-1-2\sigma}$ where $C$ is uniform and by the support property of $\varphi$, then the
$L^2$-operator norm of the integral
is therefore uniformly bounded by $2^{-2j\sigma}$, as we are integrating over a dyadic
interval in $\lambda$ and the proposition is proved. 

\end{proof}

\subsection{Dispersive estimates} In this subsection, we prove the microlocalized dispersive estimates which are the
key estimates to derive the Strichartz estimates.

\begin{proposition}\label{prop:m-disper}
Let $\leftidx{^{\sigma}}U^{\mathrm{low}}_j(t) $ and $\leftidx{^{\sigma}}U^{\mathrm{high}}_{j,k}(t) $ be defined in \eqref{Ul-mic} and  \eqref{Uh-mic}.
Let $\rho=(n-1)/2$.  Then there exists constants $C$ independent of $t, j, k$ for all
$ j\in\Z$ such that

$\bullet$ For $j\geq 0$, $\sigma\geq 0$ and $|t-\tau|\leq 2$
\begin{equation}\label{Dispersiveh}
\begin{split}
\|\leftidx{^{\sigma}}U^{\mathrm{high}}_{j,k}(t)&(\leftidx{^{\sigma}}U^{\mathrm{high}}_{j,k}(\tau))^*\|_{L^1\to L^\infty}
\\&\leq C
2^{j[(n+1)/2-2\sigma]}(2^{-j}+|t-\tau|)^{-(n-1)/2};
\end{split}
\end{equation}

$\bullet$  For $j\geq 0$, $\sigma\geq0$ and $|t-\tau|\geq 2$
\begin{equation}\label{Dispersiveh'}
\begin{split}
\|\leftidx{^{\sigma}}U^{\mathrm{high}}_{j,k}(t)&(\leftidx{^{\sigma}}U^{\mathrm{high}}_{j,k}(\tau))^*\|_{L^1\to L^\infty}
\\&\leq C
2^{j[(n+1)/2-2\sigma]}|t-\tau|^{-K},\quad \forall K\geq0;
\end{split}
\end{equation}

$\bullet$  For $j\leq 0$, $0\leq\sigma<3/2$ and $0\leq \epsilon\ll \min\{1, 3-2\sigma\}$
\begin{equation}\label{Dispersivel}
\|\leftidx{^{\sigma}}U^{\mathrm{low}}_{j}(t)(\leftidx{^{\sigma}}U^{\mathrm{low}}_{j}(\tau))^*\|_{L^1\to L^\infty}\leq C
2^{\mp\epsilon j}(1+|t-\tau|)^{2\sigma-3\mp\epsilon}.
\end{equation}

\end{proposition}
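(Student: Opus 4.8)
The strategy is to reduce each dispersive bound to an oscillatory integral estimate using the microlocalized spectral measure from Proposition \ref{prop:localized spectral measure}. For the high-energy operators, write the Schwartz kernel of $\leftidx{^{\sigma}}U^{\mathrm{high}}_{j,k}(t)(\leftidx{^{\sigma}}U^{\mathrm{high}}_{j,k}(\tau))^*$ as an integral of $e^{i(t-\tau)\lambda}\varphi(2^{-j}\lambda)^2(1-\chi)^2(\lambda)\lambda^{-2\sigma}$ against $(Q_k(\lambda)dE_{\sqrt H}(\lambda)Q_k^*(\lambda))(z,z')$; plugging in \eqref{bean}, the kernel becomes, modulo the rapidly-decaying remainder $b$ controlled by \eqref{beans'}, a sum over $\pm$ of
\begin{equation}\label{osc}
\int_0^\infty e^{i(t-\tau\pm d(z,z'))\lambda}\varphi(2^{-j}\lambda)^2(1-\chi)^2(\lambda)\lambda^{n-1-2\sigma}a_\pm(\lambda;z,z')\,d\lambda.
\end{equation}
After rescaling $\lambda=2^j s$ the measure contributes $2^{j(n-2\sigma)}$, and the phase is $2^j(t-\tau\pm d)s$; a non-stationary phase / repeated integration-by-parts argument, using the derivative bounds \eqref{beans} on $a_\pm$ (which give $\lambda^{-\alpha}$-type gains matching the rescaling and, for $d(z,z')\le1$, the extra decay factor $(1+\lambda d)^{-(n-1)/2}$), yields the bound $2^{j(n-2\sigma)}(1+2^j|t-\tau\pm d|)^{-N}$ for the $d\le 1$ piece after absorbing the $(1+\lambda d)^{-(n-1)/2}$ factor, and one then checks that taking $N$ large and summing in the obvious way produces exactly $2^{j[(n+1)/2-2\sigma]}(2^{-j}+|t-\tau|)^{-(n-1)/2}$ when $|t-\tau|\le 2$, since on the support the relevant region has $|d|\lesssim 1$ too. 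For $|t-\tau|\ge 2$ the phase $t-\tau\pm d$ is bounded below when $d\le 1$ (giving the $|t-\tau|^{-K}$ decay directly), while for $d\ge 1$ one uses instead the second line of \eqref{beans}, which provides the factor $\lambda^{-(n-1)/2}e^{-(n-1)d/2}$; the exponential decay in $d$ then dominates and again gives arbitrarily fast decay in $|t-\tau|$. The remainder term $b$ is harmless in both regimes by \eqref{beans'}.

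For the low-energy operator $\leftidx{^{\sigma}}U^{\mathrm{low}}_j(t)(\leftidx{^{\sigma}}U^{\mathrm{low}}_j(\tau))^*$ with $j\le 0$, I would use the low-energy form \eqref{lbeans} of the spectral measure. Its kernel is then an integral of $e^{i(t-\tau)\lambda}\chi(\lambda)^2\varphi(2^{-j}\lambda)^2\lambda^{-2\sigma}\cdot\lambda\big((\rho_L\rho_R)^{\frac{n-1}2+i\lambda}a(\lambda)-(\rho_L\rho_R)^{\frac{n-1}2-i\lambda}a(-\lambda)\big)$ over $\lambda\sim 2^j\le 1$. Writing $(\rho_L\rho_R)^{\pm i\lambda}=e^{\pm i\lambda\log(\rho_L\rho_R)}$ and noting $-\log(\rho_L\rho_R)$ is comparable to $d(z,z')$ away from the front face, one again gets an oscillatory integral in $\lambda$ with phase $\sim\lambda(t-\tau\pm d)$ and smooth amplitude $a(\pm\lambda;z,z')$ of size $O(1)$, carrying the prefactor $(\rho_L\rho_R)^{(n-1)/2}\sim e^{-(n-1)d/2}$. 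Rescaling $\lambda=2^j s$ gives a factor $2^{j(2-2\sigma)}$ from $\lambda^{1-2\sigma}d\lambda$; integrating by parts in $s$ (legitimate since $0\le\sigma<3/2$ keeps $\lambda^{1-2\sigma}$ integrable near $0$ after the cutoff, though one should be slightly careful and may instead not rescale and estimate directly, splitting $|t-\tau|\lesssim 1$ versus $|t-\tau|\gtrsim1$), together with the pointwise bound \eqref{lbeans'} for the trivial $|t-\tau|\lesssim1$ case, should produce $(1+|t-\tau|)^{2\sigma-3}$ decay; the small loss $2^{\mp\epsilon j}(1+|t-\tau|)^{\mp\epsilon}$ comes from interpolating the $\lambda^{1-2\sigma}\,d\lambda$ estimate against a crude one, i.e.\ trading $\epsilon$ powers of $\lambda\sim 2^j$ for $\epsilon$ powers of $|t-\tau|$ via the oscillation, which is exactly where the constraint $\epsilon\ll\min\{1,3-2\sigma\}$ enters to keep everything integrable.

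The main obstacle, I expect, is the low-energy estimate \eqref{Dispersivel}: the interplay between the near-front-face region (where $d(z,z')$ is small and \eqref{lbeans} is smooth but the identification of $-\log(\rho_L\rho_R)$ with $d$ breaks down) and the far region requires care, and obtaining the precise exponents $2\sigma-3\mp\epsilon$ — rather than, say, $2\sigma-3$ with no loss or a full integer loss — forces the delicate $\epsilon$-interpolation between two natural estimates; this is the step where the factor $(1+d(z,z'))e^{-(n-1)d/2}$ in \eqref{lbeans'} and the analyticity/oscillation in $\lambda$ of $(\rho_L\rho_R)^{i\lambda}$ must be balanced against each other. For the high-energy bounds, the conjugate-point issue is already absorbed into the microlocalization $Q_k(\lambda)$ by Proposition \ref{prop:localized spectral measure}, so those estimates follow a by-now-standard stationary phase scheme as in \cite{HZ, Zhang, Chen}, and the only real bookkeeping is tracking the two regimes $d\lessgtr 1$ in \eqref{beans} and checking the exponents match \eqref{Dispersiveh}–\eqref{Dispersiveh'}.
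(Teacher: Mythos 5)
Your high-energy argument (rescale $\lambda=2^js$, non-stationary phase in the two regimes $d(z,z')\lessgtr 1$, dump the remainder $b$ via \eqref{beans'}) is essentially the paper's proof of \eqref{Dispersiveh}--\eqref{Dispersiveh'} via Lemma \ref{lem:dispersive}, so that part is fine.

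The low-energy plan, however, has a genuine gap in the power-counting. You split the kernel into two $\pm$ pieces with phases $\lambda(t-\tau\pm d)$ and treat the amplitude $a(\pm\lambda;z,z')$ as $O(1)$, so you record the measure as $\lambda^{1-2\sigma}\,d\lambda$ and a trivial bound $\sim 2^{j(2-2\sigma)}$. But the crucial point in \eqref{lbeans} is the \emph{cancellation}: the bracket $(\rho_L\rho_R)^{i\lambda}a(\lambda)-(\rho_L\rho_R)^{-i\lambda}a(-\lambda)$ vanishes at $\lambda=0$, so by smoothness of $a$ it is $O(\lambda)$, and together with the prefactor $\lambda$ one gets $|E'_{\sqrt H}(\lambda)|\lesssim\lambda^2$ as in \eqref{lbeans'}. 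Splitting into $\pm$ pieces destroys this cancellation and costs a full power of $\lambda\sim 2^j$. Concretely, with $j\le 0$, your trivial bound $2^{j(2-2\sigma)}$ already fails to be $\lesssim 2^{\mp\epsilon j}$ when $|t-\tau|\lesssim 1$: for $\sigma$ close to $3/2$ it behaves like $2^{-j}\to\infty$ as $j\to-\infty$, whereas the target $2^{\mp\epsilon j}$ grows only like $2^{\epsilon|j|}$ (or decays). The correct trivial bound, using the $O(\lambda^2)$ cancellation, is $2^{j(3-2\sigma)}$, which stays $\le 1$ for $\sigma<3/2$ and $j\le 0$. Moreover, for $|t-\tau|\gg 1$ the paper does \emph{not} rescale by $2^j$ and integrate by parts against $e^{i(t-\tau)\lambda}$ as you propose; it substitutes $\bar\lambda=\lambda/t$ to pull out $t^{2\sigma-1}$, dyadically decomposes in the \emph{new} variable, and then uses the cancellation twice: as an $O(\bar\lambda^2)$ pointwise bound on the piece $\lambda\lesssim 1$, and, on the piece $\lambda\gtrsim 1$, via four integrations by parts where any derivative falling on the bracketed term produces a $t^{-1}$ gain (while retaining an $O(\bar\lambda)$ factor otherwise), using $(\rho_L\rho_R)^{(n-1)/2}(\ln\rho_L\rho_R)^4\lesssim(1+d)^4e^{-(n-1)d/2}$ to absorb the logarithms. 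Your plan of rescaling by $2^j$ gives decay only in $2^j|t-\tau|$, which is insufficient when $1\ll|t-\tau|\lesssim 2^{-j}$; the rescaling by $t$ is what produces the $(1+|t-\tau|)^{2\sigma-3\mp\epsilon}$ factor. So the low-frequency estimate \eqref{Dispersivel} needs both the cancellation and the $t$-rescaling, neither of which appears in your outline.
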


\begin{proof} As before, we have by  \cite[Lemma 5.3]{HZ}, 
\begin{equation}\begin{gathered}\label{h-express}
\leftidx{^{\sigma}}U^{\mathrm{high}}_{j,k}(t) (\leftidx{^{\sigma}}U^{\mathrm{high}}_{j,k}(\tau))^* = \int_0^\infty e^{i(t-\tau)\lambda}(1-\chi)^2(\lambda) \varphi^2\big(
\frac{\lambda}{2^j} \big) \lambda^{-2\sigma}
Q_k(\lambda) dE_{\sqrt{H}}(\lambda) Q_k(\lambda)^* 
\end{gathered}\end{equation}
and 
\begin{equation}\begin{gathered}\label{l-express}
\leftidx{^{\sigma}}U^{\mathrm{low}}_{j}(t) (\leftidx{^{\sigma}}U^{\mathrm{low}}_{j}(\tau))^* = \int_0^\infty e^{i(t-\tau)\lambda}\chi^2(\lambda) \varphi^2\big(
\frac{\lambda}{2^j} \big) \lambda^{-2\sigma} dE_{\sqrt{H}}(\lambda).
\end{gathered}\end{equation}
Let $\phi(\lambda)=\varphi^2(\lambda)$. Then the proposition is a consequence of the following lemma about the microlocalized dispersive estimates.

\begin{lemma}[Microlocalized dispersive estimates]\label{lem:dispersive}
Let $Q(\lambda)$ be the operator
$Q_k$ constructed as in Proposition \ref{prop:localized spectral measure}
and suppose $\phi\in C_c^\infty([1/2, 2])$ and takes value in
$[0,1]$. Let $\rho=(n-1)/2$ and $0<\delta\ll1$. Then, for $j\geq 0$ and any $\sigma\geq0$, there exist positive constant $C$ independent of $j$ and points
$z,z'\in M^\circ$ such that

$\bullet$ when $|t|\leq 2$

\begin{equation}\label{dispersiveh}
\begin{split}
\Big|\int_0^\infty e^{it\lambda}\phi(2^{-j}\lambda) (1-\chi)^2(\lambda)&\lambda^{-2\sigma}\big(Q(\lambda)
E'_{\sqrt{H}}(\lambda)Q^*(\lambda)\big)(z,z')
d\lambda\Big|\\&\leq C 2^{j[(n+1)/2-2\sigma]}(2^{-j}+|t|)^{-(n-1)/2}e^{-(\rho-\delta)d(z,z')},
\end{split}
\end{equation}

$\bullet$ when $|t|\geq 2$
\begin{equation}\label{dispersiveh'}
\begin{split}
\Big|\int_0^\infty e^{it\lambda}\phi(2^{-j}\lambda) (1-\chi)^2(\lambda) &\lambda^{-2\sigma}\big(Q(\lambda)
E'_{\sqrt{H}}(\lambda)Q^*(\lambda)\big)(z,z')
d\lambda\Big|\\&\leq C 2^{j[(n+1)/2-2\sigma]}|t|^{-K}e^{-(\rho-\delta)d(z,z')},\quad \forall K\geq 0;
\end{split}
\end{equation}
and for $j\leq 0$, there exists constant $C$ independent of $j$ and points
$z,z'\in M^\circ$ such that
\begin{equation}\label{dispersivel'}
\begin{split}
\Big|\int_0^\infty & e^{it\lambda}\phi(2^{-j}\lambda)\chi^2(\lambda)\lambda^{-2\sigma} 
E'_{\sqrt{H}}(\lambda; z,z')
d\lambda\Big|\\&\leq C 2^{\mp\epsilon j}(1+|t|)^{2\sigma-3\mp\epsilon}e^{-(\rho-\delta)d(z,z')},\, 0\leq \sigma<3/2, \, 0\leq \epsilon\ll \min\{1, 3-2\sigma\}.
\end{split}
\end{equation}
\end{lemma}

Note that $dE_{\sqrt{H}}(\lambda)=E'_{\sqrt{H}}(\lambda) d\lambda$, thus we have proved the result in Proposition \ref{prop:m-disper} once we prove the lemma.

\end{proof}

\begin{remark} In the proof of Proposition \ref{prop:m-disper}, the factor $e^{-(\rho-\delta)d(z,z')}$ is used as a bounded constant.
This is enough to obtain the high frequency estimate \eqref{stri} in Proposition \ref{m-strichartz} below. 
However, the factor $e^{-(\rho-\delta)d(z,z')}$ is needed to obtain the low frequency estimates  \eqref{stri'} and \eqref{stri''}.
\end{remark}

\begin{proof}[The proof of Lemma \ref{lem:dispersive}] We shall rely on Proposition \ref{prop:localized spectral measure}. 
We first prove \eqref{dispersiveh} and \eqref{dispersiveh'} which are for the high frequencies. Using Proposition \ref{prop:localized spectral measure}, it suffices to estimate 
\begin{equation}\label{a}
\int_0^\infty e^{it\lambda}\phi(2^{-j}\lambda)\lambda^{n-1-2\sigma}e^{\pm i\lambda d(z,z')}\tilde{a}_{\pm}(\lambda; z,z')
d\lambda
\end{equation}
and
\begin{equation}\label{b}
\int_0^\infty e^{it\lambda}\phi(2^{-j}\lambda)\lambda^{-2\sigma}\tilde{b}(\lambda; z,z')
d\lambda,
\end{equation}
where $\tilde{a}_\pm=(1-\chi)^2(\lambda)a_{\pm}(\lambda; z,z')$ and $\tilde{b}=(1-\chi)^2(\lambda)b(\lambda; z,z')$ with $a_\pm$ and $b$ satisfying \eqref{beans} and \eqref{beans'}.
It is easy to verify that $\tilde{a}_\pm$ and $\tilde{b}$ have the same property as $a_\pm$ and $b$ respectively, that is, $\tilde{a}_\pm$ satisfies \eqref{beans}  and $\tilde{b}$ satisfies \eqref{beans'}. Hence we briefly relabel $\tilde{a}_\pm$ to $a_\pm$ and $\tilde{b}$ to $b$ without confusion from now on.\vspace{0.2cm}

For any $K>0$, we have by  \eqref{beans'} in Proposition \ref{prop:localized spectral measure}
\begin{equation*}
\begin{split}
\Big|\int_0^\infty e^{it\lambda}\phi(2^{-j}\lambda)\lambda^{-2\sigma}b(\lambda; z,z')
d\lambda\Big| &\leq \int_0^\infty \phi(2^{-j}\lambda) \lambda^{-K-2\sigma} d\lambda \,e^{-(n-1)d(z,z')/2}
\\& \leq 2^{j(1-K-2\sigma)}e^{-(n-1)d(z,z')/2}.
\end{split}
\end{equation*}
 We use \eqref{beans'} and  $N$ integrations by parts to obtain
\begin{equation*}
\begin{split}
&\Big|\int_0^\infty e^{it\lambda}\phi(2^{-j}\lambda)\lambda^{-2\sigma}b(\lambda; z,z')
d\lambda\Big|\\&\leq \Big|\int_0^\infty \big(\frac1{
it}\frac\partial{\partial\lambda}\big)^{N}\big(e^{it\lambda}\big)
\phi(2^{-j}\lambda)\lambda^{-2\sigma}b(\lambda; z,z')
d\lambda\Big|\\& \leq
C_N|t|^{-N}\int_{2^{j-1}}^{2^{j+1}}\lambda^{-K-N-2\sigma}d\lambda \, e^{-(n-1)d(z,z')/2}\leq
C_N|t|^{-N}2^{j(1-K-N-2\sigma)}e^{-(n-1)d(z,z')/2}.
\end{split}
\end{equation*}
Note that $j\geq 0$, therefore we obtain
\begin{equation}\label{dispersive1}
\begin{split}
&\Big|\int_0^\infty e^{it\lambda} \phi(2^{-j}\lambda)\lambda^{-2\sigma}b(\lambda;z,z')
d\lambda\Big|\leq C_N(1+|t|)^{-N}2^{j(1-K-2\sigma)}e^{-(n-1)d(z,z')/2}
\end{split}
\end{equation}
which implies that \eqref{b} is bounded by the right hand side of \eqref{dispersiveh} and \eqref{dispersiveh'}.\vspace{0.2cm}

Next we estimate \eqref{a}. Due the property of $a_\pm$, we divide it into two cases.

$\bullet$ Case 1: $d(z,z')\leq 1$. By using \eqref{beans}, we obtain 
\begin{equation*}
\begin{split}
&\Big|\int_0^\infty e^{it\lambda}\phi(2^{-j}\lambda) \lambda^{n-1-2\sigma}e^{\pm i\lambda d(z,z')}a_{\pm}(\lambda; z,z')
d\lambda\Big|\\&= \Big|\int_0^\infty \left(\frac1{
i(t-d(z,z'))}\frac\partial{\partial\lambda}\right)^{N}\big(e^{i(t-d(z,z'))\lambda}\big)
\phi(2^{-j}\lambda)\lambda^{n-1-2\sigma}a_\pm(\lambda;z,z') d\lambda\Big|\\&
\leq
C_N|t-d(z,z')|^{-N}\int_{2^{j-1}}^{2^{j+1}}\lambda^{n-1-2\sigma-N}(1+\lambda
d(z,z'))^{-\frac{n-1}2}d\lambda\\&\leq
C_N2^{j(n-2\sigma-N)}|t-d(z,z')|^{-N}(1+2^jd(z,z'))^{-(n-1)/2}.
\end{split}
\end{equation*}

$\bullet$ Case 2: $d(z,z')\geq 1$. By using \eqref{beans} again, we obtain 
\begin{equation*}
\begin{split}
&\Big|\int_0^\infty e^{it\lambda}\phi(2^{-j}\lambda) \lambda^{n-1-2\sigma}e^{\pm i\lambda d(z,z')}a_{\pm}(\lambda; z,z')
d\lambda\Big|\\&= \Big|\int_0^\infty \left(\frac1{
i(t-d(z,z'))}\frac\partial{\partial\lambda}\right)^{N}\big(e^{i(t-d(z,z'))\lambda}\big)
\phi(2^{-j}\lambda)\lambda^{n-1-2\sigma}a_\pm(\lambda;z,z') d\lambda\Big|\\&
\leq
C_N|t-d(z,z')|^{-N} e^{-(n-1)d(z,z')/2}\int_{2^{j-1}}^{2^{j+1}}\lambda^{n-1-2\sigma-N}\lambda^{-\frac{n-1}2} 
 d\lambda\\&\leq
C_N2^{j(n-2\sigma-N)}|t-d(z,z')|^{-N}2^{-j(n-1)/2}e^{-(n-1)d(z,z')/2}.
\end{split}
\end{equation*}
It follows that, for $d(z,z')\leq 1$
\begin{equation}\label{dispersive2}
\begin{split}
&\Big|\int_0^\infty e^{it\lambda} \phi(2^{-j}\lambda)(1-\chi)^2(\lambda)\lambda^{-2\sigma}\big(Q(\lambda)
E'_{\sqrt{H}}(\lambda)Q^*(\lambda)\big)(z,z')
d\lambda\Big|\\&\leq
C_N2^{j(n-2\sigma)}\big(1+2^j|t-d(z,z')|\big)^{-N}(1+2^jd(z,z'))^{-(n-1)/2},
\end{split}
\end{equation}
and for  $d(z,z')\geq 1$
\begin{equation}\label{dispersive2'}
\begin{split}
&\Big|\int_0^\infty e^{it\lambda} \phi(2^{-j}\lambda)(1-\chi)^2(\lambda)\lambda^{-2\sigma}\big(Q(\lambda)
E'_{\sqrt{H}}(\lambda)Q^*(\lambda)\big)(z,z')
d\lambda\Big|\\&\leq
C_N2^{j(n-2\sigma)}\big(1+2^j|t-d(z,z')|\big)^{-N}2^{-j(n-1)/2}e^{-(n-1)d(z,z')/2}.
\end{split}
\end{equation}
Consider the case $|t|\leq 2$. We first consider the case $d(z,z')\leq 1$. If $|t|\sim d(z,z')$, it is clear to see \eqref{dispersiveh}.
Otherwise, we have $|t-d(z,z')|\geq c|t|$ for some small constant
$c$, then choose $N=(n-1)/2$ to prove \eqref{dispersiveh}. For the second case $d(z,z')\geq 1$, by using $j\geq 0$, it follows from the fact $2^{-j}+|t|\lesssim 1$.
Therefore we have proved \eqref{dispersiveh}.\vspace{0.2cm}

Next we consider the case $|t|\geq 2$. We first consider the case $d(z,z')\leq 1$. Since $|t|\geq2$, we have $|t-d(z,z')|\geq \frac14|t|$, then by \eqref{dispersive2} for any $N$
\begin{equation*}
\begin{split}
\Big|\int_0^\infty e^{it\lambda} \phi(2^{-j}\lambda)(1-\chi)^2(\lambda)\lambda^{-2\sigma}&\big(Q(\lambda)
E'_{\sqrt{H}}(\lambda)Q^*(\lambda)\big)(z,z')
d\lambda\Big|\\&
\leq C_N2^{j(n-2\sigma)}2^{-Nj}|t|^{-N}.
\end{split}
\end{equation*}
 For the second case $d(z,z')\geq 1$, if $|t|\sim d(z,z')$, it is clear to see  for $0<\delta\ll1$
 \begin{equation*}
\begin{split}
&\Big|\int_0^\infty e^{it\lambda} \phi(2^{-j}\lambda)(1-\chi)^2(\lambda)\lambda^{-2\sigma}\big(Q(\lambda)
E'_{\sqrt{H}}(\lambda)Q^*(\lambda)\big)(z,z')
d\lambda\Big|\\&\leq
C_N2^{j[(n+1)/2-2\sigma]}e^{-(n-1)d(z,z')/2}\leq C_{N,\delta}2^{j[(n+1)/2-2\sigma]}|t|^{-N}e^{-(\rho-\delta)d(z,z')}.
\end{split}
\end{equation*}
Otherwise, we have $|t-d(z,z')|\geq c|t|$ for some small constant
$c$, then by \eqref{dispersive2'} for any $N$
\begin{equation*}
\begin{split}
\Big|\int_0^\infty e^{it\lambda} \phi(2^{-j}\lambda)(1-\chi)^2(\lambda)\lambda^{-2\sigma}&\big(Q(\lambda)
E'_{\sqrt{H}}(\lambda)Q^*(\lambda)\big)(z,z')
d\lambda\Big|
\\&\leq C_N2^{j[(n+1)/2-2\sigma]}2^{-Nj}|t|^{-N}e^{-(n-1)d(z,z')/2}.
\end{split}
\end{equation*}
By using the fact $j\geq 0$, therefore we have proved \eqref{dispersiveh'}.
\vspace{0.2cm}

We next prove \eqref{dispersivel'} which is for the low frequency i.e $j\leq 0$ and for any $0\leq\sigma<3/2$. 

$\bullet$ Case 1: $|t|\lesssim 1$. In this case,  we know from \eqref{lbeans'} that
\begin{equation}
\begin{split}
\Big|\int_0^\infty e^{it\lambda}\phi(2^{-j}\lambda)\lambda^{-2\sigma}&\chi^2(\lambda)
E'_{\sqrt{H}}(\lambda; z,z')
d\lambda\Big|\\&\leq C \int_{2^{j-1}}^{2^{j+1}} \phi(2^{-j}\lambda)\lambda^{2-2\sigma}
(1+d(z,z'))e^{-(n-1)d(z,z')/2}d\lambda\\&\leq C 2^{j(3-2\sigma)} (1+d(z,z'))e^{-(n-1)d(z,z')/2}
\end{split}
\end{equation}
which implies  \eqref{dispersivel'} when $|t|\lesssim 1$.\vspace{0.2cm}

$\bullet$ Case 2: $|t|\gg 1$. In this case, we further consider two subcases.

$\bullet$ Subcase 1: $ |t|\leq 2 d(z,z')$. In this subcase, arguing as above, we obtain
\begin{equation}
\begin{split}
\Big|\int_0^\infty e^{it\lambda}\phi(2^{-j}\lambda)\lambda^{-2\sigma}&\chi^2(\lambda)
E'_{\sqrt{H}}(\lambda; z,z')
d\lambda\Big|\\&\leq C \int_{2^{j-1}}^{2^{j+1}} \phi(2^{-j}\lambda)\lambda^{2-2\sigma}
(1+d(z,z'))e^{-(n-1)d(z,z')/2}d\lambda\\&\leq C 2^{j(3-2\sigma)}(1+d(z,z'))e^{-(n-1)d(z,z')/2}\\&\leq C2^{j(3-2\sigma)}|t|^{-N} e^{-(\rho-\delta)d(z,z')}.
\end{split}
\end{equation}
for any arbitrary large $N>0$ and $0<\delta\ll1$.

$\bullet$ Subcase 2: $ |t|\geq 2 d(z,z'), |t|\gg1$.  To show \eqref{dispersivel'}, it suffices to show, for $0<\delta\ll1$
\begin{equation}\label{dispersive''}
\begin{split}
\left|\int_0^\infty e^{it\lambda}\phi(2^{-j}\lambda)\lambda^{-2\sigma}\chi^2(\lambda)
E'_{\sqrt{H}}(\lambda; z,z')
d\lambda\right|\lesssim 2^{\mp\epsilon j}|t|^{2\sigma-3\mp\epsilon}e^{-(\rho-\delta)d(z,z')}.
\end{split}
\end{equation}
To this end, let $\bar{\lambda}=\lambda/t$ and recall $\sum_k\varphi(2^{-k}\lambda)=1$, we write
\begin{equation}
\begin{split}
&\int_0^\infty e^{it\lambda}\phi(2^{-j}\lambda)\lambda^{-2\sigma}\chi^2(\lambda)
E'_{\sqrt{H}}(\lambda; z,z')
d\lambda\\&=t^{2\sigma-1} \int_0^\infty e^{i\lambda} \lambda^{-2\sigma} \phi(2^{-j}\bar{\lambda}) \chi^2(\bar{\lambda})
E'_{\sqrt{H}}(\bar{\lambda};z,z')d\lambda \\&=t^{2\sigma-1} \sum_{k\in\Z}\int_0^\infty e^{i\lambda} \lambda^{-2\sigma}\varphi(2^{-k}\lambda) \phi(2^{-j}\bar{\lambda}) \chi^2(\bar{\lambda})
E'_{\sqrt{H}}(\bar{\lambda};z,z')d\lambda.
\end{split}
\end{equation}
Define
\begin{equation}
\begin{split}
&I=t^{2\sigma-1} \sum_{k\leq 0}\int_0^\infty e^{i\lambda} \lambda^{-2\sigma}\varphi(2^{-k}\lambda) \phi(2^{-j}\bar{\lambda}) \chi^2(\bar{\lambda})
E'_{\sqrt{H}}(\bar{\lambda};z,z')d\lambda; \\
&II=t^{2\sigma-1} \sum_{k\geq 1}\int_0^\infty e^{i\lambda} \lambda^{-2\sigma}\varphi(2^{-k}\lambda) \phi(2^{-j}\bar{\lambda}) \chi^2(\bar{\lambda})
E'_{\sqrt{H}}(\bar{\lambda};z,z')d\lambda.\\
\end{split}
\end{equation}
Recall $\bar{\lambda}=\lambda/t$, by \eqref{lbeans'} and $\lambda/t\sim 2^{j}$, then we have 
\begin{equation}
\begin{split}
&|I|=|t|^{2\sigma-1} \left|\sum_{k\leq 0}\int_0^\infty e^{i\lambda} \lambda^{-2\sigma}\varphi(2^{-k}\lambda) \phi(2^{-j}\bar{\lambda}) \chi^2(\bar{\lambda})
E'_{\sqrt{H}}(\bar{\lambda};z,z')d\lambda\right| \\
&\leq t^{2\sigma-1}2^{\mp j\epsilon}\sum_{k\leq 0}\int_{2^k}^{2^{k+1}} \lambda^{-2\sigma}(t^{-1}\lambda)^{2\pm\epsilon}(1+d(z,z'))e^{-(n-1)d(z,z')/2} d\lambda \\
&\lesssim 2^{\mp j\epsilon} t^{2\sigma-3\mp\epsilon} (1+d(z,z')) e^{-(n-1)d(z,z')/2},\quad 0\leq \sigma<3/2,  0\leq\epsilon\ll\min\{1,3-2\sigma\} \end{split}
\end{equation}
which gives \eqref{dispersive''}.
By \eqref{lbeans} in Proposition \ref{prop:localized spectral measure}, we have 
\begin{equation}
\begin{split}
II&=t^{2\sigma-1} \sum_{k\geq 1}\int_0^\infty e^{i\lambda} \lambda^{-2\sigma}\varphi(2^{-k}\lambda) \phi(2^{-j}\bar{\lambda}) \chi^2(\bar{\lambda})
E'_{\sqrt{H}}(\bar{\lambda};z,z')d\lambda\\
&= 2^{\mp \epsilon j}t^{2\sigma-1}(\rho_L\rho_R)^{\frac {n-1}2}\sum_{k\geq 1}\int_0^\infty e^{i\lambda} (t^{-1}\lambda)^{1\pm\epsilon}\lambda^{-2\sigma}\varphi(2^{-k}\lambda) \phi(2^{-j}\bar{\lambda})\chi^2(\bar{\lambda}) \\
&\qquad \times\left((\rho_L\rho_R)^{i\bar{\lambda}}a(\bar{\lambda};z,z')-(\rho_L\rho_R)^{-i\bar{\lambda}}a(-\bar{\lambda};z,z')\right) d\lambda.
\end{split}
\end{equation}
By integration by parts, we estimate
\begin{equation}
\begin{split}
II &\lesssim  t^{2\sigma-2\mp\epsilon}2^{\mp j\epsilon}(\rho_L\rho_R)^{\frac {n-1}2}\sum_{k\geq 1}\int_0^\infty \big(\frac{d}{ d\lambda}\big)^4 \Big(\lambda^{1\pm\epsilon-2\sigma}\varphi(2^{-k}\lambda) \phi(2^{-j}\bar{\lambda})\chi^2(\bar{\lambda}) \\
&\qquad \times\left((\rho_L\rho_R)^{i\bar{\lambda}}a(\bar{\lambda};z,z')-(\rho_L\rho_R)^{-i\bar{\lambda}}a(-\bar{\lambda};z,z')\right)\Big) d\lambda.
\end{split}
\end{equation}
If none of the derivative hits the term $\left((\rho_L\rho_R)^{i\bar{\lambda}}a(\bar{\lambda};z,z')-(\rho_L\rho_R)^{-i\bar{\lambda}}a(-\bar{\lambda};z,z')\right)$,
since $|\bar{\lambda}|=|\lambda/t|\leq 1$, then we use the smoothness of $a$ at $0$ to obtain 
$$\left((\rho_L\rho_R)^{i\bar{\lambda}}a(\bar{\lambda};z,z')-(\rho_L\rho_R)^{-i\bar{\lambda}}a(-\bar{\lambda};z,z')\right)\lesssim\bar{\lambda}\lesssim \lambda t^{-1}.$$
If the derivatives hit the other terms we gain $\lambda^{-4}$. In this case, note that $0 \leq \sigma<3/2$ and $0\leq \epsilon\ll1$ and we show 
\begin{equation}
\begin{split}
|II_1|&\lesssim  t^{2\sigma-3\mp\epsilon}2^{\mp j\epsilon}(\rho_L\rho_R)^{\frac {n-1}2}\sum_{k\geq 1}\int_{2^{k}}^{2^{k+1}} \lambda^{-2\mp\epsilon}d\lambda\lesssim  t^{2\sigma-3\mp\epsilon}2^{\mp j\epsilon}(\rho_L\rho_R)^{\frac {n-1}2}.
\end{split}
\end{equation}
If at least one derivative hits the term $\left((\rho_L\rho_R)^{i\bar{\lambda}}a(\bar{\lambda};z,z')-(\rho_L\rho_R)^{-i\bar{\lambda}}a(-\bar{\lambda};z,z')\right)$, since $a\in \mathcal{C}^\infty$, we gain $t^{-1}$ at least.
Note that $\lambda/t\lesssim 1$, we gain in total  $\lambda^{-3}t^{-1}$, then 
\begin{equation}
\begin{split}
|II_2|&\lesssim  t^{2\sigma-3\mp\epsilon}2^{\mp j\epsilon}(\rho_L\rho_R)^{\frac {n-1}2}(\ln(\rho_L\rho_R))^4\sum_{k\geq 1}\int_{2^{k}}^{2^{k+1}} \lambda^{-2\pm\epsilon}d\lambda
\\&\lesssim  t^{2\sigma-3\mp\epsilon}2^{\mp j\epsilon}(\rho_L\rho_R)^{\frac {n-1}2}(\ln(\rho_L\rho_R))^4.
\end{split}
\end{equation}
From \cite[Proposition 3.4]{CH2}, we have
$$(\rho_L\rho_R)^{\frac {n-1}2}(\ln(\rho_L\rho_R))^4\leq (1+d(z,z'))^4 e^{-(n-1)d(z,z')/2}.$$
Therefore we prove \eqref{dispersive''}, hence we have \eqref{dispersivel'}. The proof of Lemma \ref{lem:dispersive} is then complete.

\end{proof}

\subsection{Microlocalized Strichartz estimate} In this subsection, we use the $L^2$-estimate and dispersive estimate for the microlocalized wave propagator to obtain 
the microlocalized Strichartz estimate.

\begin{proposition}\label{m-strichartz}
Let  $\leftidx{^{\sigma}}U^{\mathrm{low}}_j(t) $ and $\leftidx{^{\sigma}}U^{\mathrm{high}}_{j,k}(t) $ be defined in \eqref{Ul-mic} and  \eqref{Uh-mic} and $n\geq3$.
Then for every pair $(q,r)\in[2,\infty]\times(2,\infty]$, there exists a constant $C$ only depending on $n$,
$q$ and $r$ such that:\vspace{0.1cm}

$\bullet$ For $j\geq0$ and $\sigma\geq0$
\begin{equation}\label{stri}
\Big(\int_{\R}\|\leftidx{^{\sigma}}U^{\mathrm{high}}_{j,k}(t) f\|_{L^r}^q dt\Big)^{\frac1q}\leq C (1+j)
2^{j(s-\sigma)}\|f\|_{L^2},
\end{equation}where $s=s_e$ as in \eqref{se} when $2/q\geq (n-1)(1/2-1/r)$ and $s=s_w$ defined in \eqref{sw} when $2/q\leq (n-1)(1/2-1/r)$;\vspace{0.1cm}

$\bullet$ For $j\leq0$, if $0\leq \sigma<1$ 
\begin{equation}\label{stri'}
\Big(\int_{\R}\|\leftidx{^{\sigma}}U^{\mathrm{low}}_j(t)  f\|_{L^r}^q dt\Big)^{\frac1q}\leq C
2^{\epsilon j}\|f\|_{L^2}; \quad \forall 0\leq\epsilon\ll 1,\quad \epsilon<1-\sigma;
\end{equation}

$\bullet$ For $j\leq0$, $\sigma=1$ and $q\geq 2$
\begin{equation}\label{stri''}
\Big(\int_{\R}\|\leftidx{^{\sigma}}U^{\mathrm{low}}_j(t)  f\|_{L^r}^q dt\Big)^{\frac1q}\leq C
2^{-j\epsilon}\|f\|_{L^2},\quad \forall 0<\epsilon\ll 1.
\end{equation}
In addition if $q\neq 2$, one can choose $\epsilon=0$.

\end{proposition}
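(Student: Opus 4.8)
The plan is to run the abstract $TT^*$ argument, feeding in the $L^2$--estimates of Proposition \ref{energy} and the dispersive estimates of Proposition \ref{prop:m-disper} as the only inputs, and to treat the high- and low-frequency pieces by two different interpolations. By the $TT^*$ lemma, each bound $\|V(t)f\|_{L^q_t L^r_x}\lesssim A\,\|f\|_{L^2}$ is equivalent to $\big\|\int V(t)V(\tau)^*F(\tau)\,d\tau\big\|_{L^q_t L^r_x}\lesssim A^2\|F\|_{L^{q'}_t L^{r'}_x}$, and since (by \cite[Lemma 5.3]{HZ}) the composition $V(t)V(\tau)^*$ is again a spectrally localised operator of exactly the form treated in Proposition \ref{prop:m-disper}, the dispersive bounds apply to it verbatim. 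I would then split the $\tau$--integral into the near-diagonal part $|t-\tau|\le 2$ and the far part $|t-\tau|>2$ and estimate each with Young's inequality in $t$ rather than Hardy--Littlewood--Sobolev; this avoids the Keel--Tao endpoint subtlety, at the price of logarithmic factors in $j$ that the strict inequalities $\mu>s_w$, $\mu>s_e$ in \eqref{w-admissible}--\eqref{e-admissible} will later absorb.

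For $j\ge 0$ take $V=\leftidx{^{\sigma}}U^{\mathrm{high}}_{j,k}$. Interpolating \eqref{Dispersiveh} against $\|V(t)\|_{L^2\to L^2}\lesssim 2^{-\sigma j}$ with weight $2/r$ on the $L^2$ endpoint gives, on the near part, $\|V(t)V(\tau)^*\|_{L^{r'}\to L^r}\lesssim 2^{2j(s_e-\sigma)}(2^{-j}+|t-\tau|)^{-\beta}$ with $\beta=(n-1)(\tfrac12-\tfrac1r)$, and interpolating \eqref{Dispersiveh'} the same way gives $2^{2j(s_e-\sigma)}|t-\tau|^{-K}$ for every $K$ on the far part. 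Young's inequality in $t$ with exponent $q/2$ closes the far part for all $q\ge2$ once $K$ is large, and for the near part one uses the elementary fact $\|(2^{-j}+|s|)^{-\beta}\mathbf{1}_{|s|\le 2}\|_{L^{q/2}_t}\lesssim 2^{j(\beta-2/q)_+}$, with an additional factor $(1+j)$ precisely when $\beta=\tfrac2q$. Combining with the identity $s_e-s_w=\tfrac1q-\tfrac\beta2$, the total power of $2^j$ becomes $2(s_w-\sigma)$ when $\tfrac2q\le\beta$ (the region $\Lambda_w$, where also $s_e\le s_w$, so the far part is dominated) and $2(s_e-\sigma)$ when $\tfrac2q\ge\beta$ (the region $\Lambda_e$), the logarithmic factor surviving only on the dividing line $\tfrac2q=(n-1)(\tfrac12-\tfrac1r)$, in particular at the Keel--Tao endpoint; taking square roots yields \eqref{stri}.

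For $j\le 0$ take $V=\leftidx{^{\sigma}}U^{\mathrm{low}}_{j}$. Here the $L^2$--bound $2^{-\sigma j}$ blows up as $j\to-\infty$, so interpolating against $L^2$ is useless; instead I would keep the exponential factor $e^{-(\rho-\delta)d(z,z')}$ that Proposition \ref{prop:m-disper} carries in \eqref{Dispersivel} (this is exactly why the remark after it insists on retaining it at low frequency) and estimate the spatial convolution directly by the Kunze--Stein-type Lemma \ref{lem:qq'}, whose hypothesis $\delta<(n-1)(\tfrac12-\tfrac1r)$ is available because $r>2$ on the admissible range. This produces $\|V(t)V(\tau)^*\|_{L^{r'}\to L^r}\lesssim 2^{\mp\epsilon j}(1+|t-\tau|)^{2\sigma-3\mp\epsilon}$, and since $\sigma<3/2$ the time weight lies in $L^{q/2}_t$ for every $q\ge 2$ relevant; Young's inequality in $t$ then finishes, the sign in \eqref{Dispersivel} being chosen as ``$+$'' to produce the gain $2^{\epsilon j}$ when $0\le\sigma<1$ (giving \eqref{stri'}) and as ``$-$'' to produce the admissible loss $2^{-\epsilon j}$ when $\sigma=1$, where $\epsilon=0$ is permitted once $q>2$ because then $q/2>1$ and $(1+|s|)^{-1}\in L^{q/2}$ (giving \eqref{stri''}).

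I expect the main obstacle to be the high-frequency near-diagonal bookkeeping: correctly matching the three cases $\beta>\tfrac2q$, $\beta=\tfrac2q$, $\beta<\tfrac2q$ to the two Strichartz exponents $s_w$ and $s_e$ and pinning down exactly where the logarithmic loss $(1+j)$ is forced. The other point needing care is the realisation that at low frequency the spatial estimate must be routed through Lemma \ref{lem:qq'} rather than through $L^2$--interpolation, which is the structural reason the exponential weight $e^{-(\rho-\delta)d(z,z')}$ has to be carried all the way through the dispersive estimates.
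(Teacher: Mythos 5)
Your proposal follows exactly the route the paper takes: after the $TT^*$ reduction, you interpolate \eqref{Dispersiveh}–\eqref{Dispersiveh'} against the $L^2$ bound of Proposition \ref{energy} for $j\geq0$ and close with Young's inequality in time (the paper's Cases 1 and 2, including the identity $2(s_e-\sigma)+\big((n-1)(\tfrac12-\tfrac1r)-\tfrac2q\big)_+=2(s-\sigma)$ and the logarithmic factor on the dividing line), and for $j\leq0$ you route the spatial estimate through the exponential weight in \eqref{dispersivel'} and the Kunze–Stein Lemma \ref{lem:qq'} with the sign in $2^{\mp\epsilon j}$ chosen according to whether $\sigma<1$ or $\sigma=1$ (the paper's Cases 3 and 4). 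The argument and the bookkeeping are the same as the paper's proof.
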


\begin{remark} The log regularity $j$ in \eqref{stri} appears on the line $\frac2q= (n-1)(\frac12-\frac1r)$.
This loss can be removed using Keel-Tao's argument
\cite[Sections 3-7]{KT}, but we do not pursue here sharp regularity. 
\end{remark}

\begin{proof} We closely follow Keel-Tao's argument
\cite[Sections 3-7]{KT}. By the $TT^*$ argument, it
suffices to show
\begin{equation}\label{bi-est}
\begin{split}
\Big|\iint\langle (\leftidx{^{\sigma}}U^{\mathrm{high}}_{j,k}(\tau))^*F(\tau), (\leftidx{^{\sigma}}U^{\mathrm{high}}_{j,k}(t))^*G(t) \rangle d\tau dt\Big|\lesssim
2^{2j(s-\sigma)}(1+j)^2\|F\|_{L^{q'}_tL^{r'}}\|G\|_{L^{q'}_tL^{r'}},
\end{split}
\end{equation}
and
\begin{equation}\label{bi-est'}
\begin{split}
\Big|\iint\langle (\leftidx{^{\sigma}}U^{\mathrm{low}}_{j}(\tau))^*F(\tau), (\leftidx{^{\sigma}}U^{\mathrm{low}}_{j}(t))^*G(t) \rangle d\tau dt\Big|\lesssim
C\Lambda(j)^2\|F\|_{L^{q'}_tL^{r'}}\|G\|_{L^{q'}_tL^{r'}},
\end{split}
\end{equation}
where $\Lambda(j)= 2^{\epsilon j}$ when $0\leq\sigma<1$ with $0\leq \epsilon\ll1$, and $\Lambda(j)= 2^{-\epsilon j}$ when $\sigma=1$ with $0<\epsilon\ll1$. 
In particular, $\sigma=1$ and $q\neq 2$ one can choose $\epsilon=0$.\vspace{0.2cm}

To this end, we consider four cases.

{\bf Case 1:} $j\geq0$ and $|t-\tau|\leq 2$. By the interpolation of the bilinear form of \eqref{Dispersiveh} and the energy estimate in Proposition \ref{energy}, we have
\begin{equation*}
\begin{split}
&\langle (\leftidx{^{\sigma}}U^{\mathrm{high}}_{j,k}(\tau))^*F(\tau), (\leftidx{^{\sigma}}U^{\mathrm{high}}_{j,k}(t))^*G(t) \rangle\\&\leq
C2^{j[(n+1)(\frac12-\frac1r)-2\sigma]}(2^{-j}+|t-\tau|)^{-(n-1)(\frac12-\frac1r)}\|F\|_{L^{r'}}\|G\|_{L^{r'}}.
\end{split}
\end{equation*}
Therefore we obtain by H\"older's and Young's inequalities 
\begin{equation*}
\begin{split}
&\Big|\iint\langle (\leftidx{^{\sigma}}U^{\mathrm{high}}_{j,k}(\tau))^*F(\tau), (\leftidx{^{\sigma}}U^{\mathrm{high}}_{j,k}(t))^*G(t) \rangle
dsdt\Big|\\&\lesssim
2^{j[(n+1)(\frac12-\frac1r)-2\sigma]}\iint_{|t-\tau|\leq 2}(2^{-j}+|t-\tau|)^{-(n-1)(\frac12-\frac1r)}\|F(\tau)\|_{L^{r'}}\|G(t)\|_{L^{r'}}dtd\tau\\&
\lesssim
2^{2j(s_e-\sigma)}\max\{2^{j[(n-1)(\frac12-\frac1r)-\frac2q]},1\}\|F\|_{L^{q'}_tL^{r'}}\|G\|_{L^{q'}_tL^{r'}}
\end{split}
\end{equation*}
when $\frac2q\neq (n-1)(\frac12-\frac1r)$. If $\frac2q= (n-1)(\frac12-\frac1r)$, we similarly have 
\begin{equation*}
\begin{split}
&\Big|\iint\langle (\leftidx{^{\sigma}}U^{\mathrm{high}}_{j,k}(\tau))^*F(\tau), (\leftidx{^{\sigma}}U^{\mathrm{high}}_{j,k}(t))^*G(t) \rangle
dsdt\Big|\\&\lesssim
(1+j)2^{2j(s_e-\sigma)}\|F\|_{L^{q'}_tL^{r'}}\|G\|_{L^{q'}_tL^{r'}}.
\end{split}
\end{equation*}

{\bf Case 2:} $j\geq0$ and $|t-\tau|\geq 2$. Similarly, by the interpolation of the bilinear form of \eqref{Dispersiveh'} and the energy estimate in Proposition \ref{energy}, we have
\begin{equation*}
\begin{split}
\langle (\leftidx{^{\sigma}}U^{\mathrm{high}}_{j,k}(\tau))^*F(\tau), (\leftidx{^{\sigma}}U^{\mathrm{high}}_{j,k}(t))^*G(t) \rangle&\leq
C2^{j[(n+1)(\frac12-\frac1r)-2\sigma]}|t-\tau|^{-2N(\frac12-\frac1r)}\|F\|_{L^{r'}}\|G\|_{L^{r'}}.
\end{split}
\end{equation*}
Therefore,  by using H\"older's and Young's inequalities and choosing $N$ enough, we obtain
\begin{equation*}
\begin{split}
&\Big|\iint\langle (\leftidx{^{\sigma}}U^{\mathrm{high}}_{j,k}(\tau))^*F(\tau), (\leftidx{^{\sigma}}U^{\mathrm{high}}_{j,k}(t))^*G(t) \rangle
dsdt\Big|\\&\lesssim
2^{j[(n+1)(\frac12-\frac1r)-2\sigma]}\iint_{|t-\tau|\geq 2}|t-\tau|^{-2N(\frac12-\frac1r)}\|F(\tau)\|_{L^{r'}}\|G(t)\|_{L^{r'}}dtd\tau\\&
\lesssim
2^{2j(s_e-\sigma)}\|F\|_{L^{q'}_tL^{r'}}\|G\|_{L^{q'}_tL^{r'}}.
\end{split}
\end{equation*}
By the definition of $s$, we collect the two cases to prove \eqref{stri}.\vspace{0.1cm}

{\bf Case 3:} $j\leq0$ and $0\leq \sigma<1$.  By using \eqref{dispersivel'} with positive sign and small $\delta$ satisfying $0<\delta<\delta_0(r)$ as in Lemma \ref{lem:qq'}, 
we use 
 \eqref{est:qq'} to obtain
\begin{equation*}
\begin{split}
\langle (\leftidx{^{\sigma}}U^{\mathrm{low}}_{j}(\tau))^*F(\tau), (\leftidx{^{\sigma}}U^{\mathrm{low}}_{j}(t))^*G(t) \rangle&\leq
C\|\leftidx{^{\sigma}}U^{\mathrm{low}}_{j}(t)(\leftidx{^{\sigma}}U^{\mathrm{low}}_{j}(\tau))^*F\|_{L^{r}}\|G(t)\|_{L^{r'}}\\&\leq C 2^{2\epsilon j}(1+|t-\tau|)^{2\sigma-3+2\epsilon}
\big\|\int e^{-(\rho-\delta)d(z,z')}F dg(z')\big\|_{L^{r}}\|G(t)\|_{L^{r'}}
\\&\leq C2^{2\epsilon j}(1+|t-\tau|)^{2\sigma-3+2\epsilon} \|F(\tau)\|_{L^{r'}}\|G(t)\|_{L^{r'}}.
\end{split}
\end{equation*}
Note that if  $0\leq \sigma<1$,  for $q\geq2$,  it gives $2/q<3-2\sigma-2\epsilon$ when $0\leq \epsilon\ll1-\sigma$ . Therefore,  by using H\"older's and Young's inequalities, we obtain for $q\geq2$
\begin{equation*}
\begin{split}
&\Big|\iint\langle (\leftidx{^{\sigma}}U^{\mathrm{low}}_{j}(\tau))^*F(\tau), (\leftidx{^{\sigma}}U^{\mathrm{low}}_{j}(t))^*G(t) \rangle
d\tau dt\Big|\\&\lesssim
2^{2\epsilon j}\iint(1+|t-\tau|)^{2\sigma-3+2\epsilon}\|F(t)\|_{L^{r'}}\|G(\tau)\|_{L^{r'}}dtd\tau\\&
\lesssim 2^{2\epsilon j} \|F\|_{L^{q'}_tL^{r'}}\|G\|_{L^{q'}_tL^{r'}}.
\end{split}
\end{equation*}
This proves \eqref{bi-est'}. 

{\bf Case 4:} $j\leq0$, $\sigma=1$ and $q\geq2$. By using \eqref{dispersivel'} with negative sign and similar argument as above, we have
\begin{equation*}
\begin{split}
\langle (\leftidx{^{\sigma}}U^{\mathrm{low}}_{j}(\tau))^*F(\tau), (\leftidx{^{\sigma}}U^{\mathrm{low}}_{j}(t))^*G(t) \rangle&\leq
C\|\leftidx{^{\sigma}}U^{\mathrm{low}}_{j}(t)(\leftidx{^{\sigma}}U^{\mathrm{low}}_{j}(\tau))^*F\|_{L^{r}}\|G(t)\|_{L^{r'}}\\&\leq C2^{-2j\epsilon}(1+|t-\tau|)^{-1-2\epsilon}\big\|\int e^{-(\rho-\delta)d(z,z')}F dg(z')\big\|_{L^{r}}\|G(t)\|_{L^{r'}}
\\&\leq C2^{-2j\epsilon}(1+|t-\tau|)^{-1-2\epsilon}\|F(\tau)\|_{L^{r'}}\|G(t)\|_{L^{r'}}.
\end{split}
\end{equation*}
So similar above argument proves \eqref{bi-est'}.  In particular $q>2$, it is clear that one can choose $\epsilon=0$.

\end{proof}

\section{Homogeneous Strichartz estimates}
In this section, we prove Theorem \ref{thm:hstri} by using the mircolocalized Strichartz estimate in Proposition \ref{m-strichartz}.
Recall $H=-\Delta_g-\rho^2$ and let $u$ be the solution of 
\begin{equation}\label{leq}
\partial_{t}^2u+H u=0, \quad u(0)=u_0(z),
~\partial_tu(0)=u_1(z).
\end{equation}
Then we have
\begin{equation*}
\begin{split}
u(t,z)
=\frac{U(t)+U(-t)}2 u_0+\frac{U(t)-U(-t)}{2i\sqrt{H}}u_1,
\end{split}
\end{equation*}
where $U(t)=e^{it\sqrt{H}}$.
By recalling $\leftidx{^{\sigma}}U(t)=e^{it\sqrt{H}}H^{-\frac{\sigma}2}$ and  using \eqref{Ulh}, we aim to estimate
\begin{equation}
\begin{split}
&\|u\|_{L^q(\R;L^r(M^\circ))}\\&\lesssim
\sum_{\pm}\sum_{\sigma\in\{0,1\}}\left(\|\leftidx{^{\sigma}}U^{\mathrm{low}}(\pm t)u_\sigma\|_{L^q(\R;L^r(M^\circ))}+\|\leftidx{^{\sigma}}U^{\mathrm{high}}(\pm t)u_\sigma\|_{L^q(\R;L^r(M^\circ))}\right).
\end{split}
\end{equation}
 To prove \eqref{hstri} in Theorem \ref{thm:hstri},  it is enough to prove
 \begin{equation}\label{hstri'}
\|\leftidx{^{\alpha}}U^{\mathrm{high}}(t)f\|_{L^q_t(\R:L^r(M^\circ))}\lesssim
 \|f\|_{L^2(M^\circ)}, 
\end{equation}
with $\alpha=\mu$
and
\begin{equation}\label{lstri'}
\|\leftidx{^{\beta}}U^{\mathrm{low}}(t)f\|_{L^q_t(\R:L^r(M^\circ))}\lesssim \|f\|_{L^2(M^\circ)},
\end{equation}
 where $\beta$ equals $0$ or $1-\epsilon$ with $0<\epsilon\ll1$.
  Recall $\leftidx{^{\sigma}}U^{\mathrm{low}}_j(t) $ and $\leftidx{^{\sigma}}U^{\mathrm{high}}_{j,k}(t) $ defined in \eqref{Ul-mic} and  \eqref{Uh-mic}, then we have
\begin{equation*}
\begin{split}
\leftidx{^{\sigma}}U^{\mathrm{high}}(t)f=\sum_{j\geq0}\sum_{k=0}^{N}\leftidx{^{\sigma}}U^{\mathrm{high}}_{j,k}(t)f
\end{split}
\end{equation*}
and
\begin{equation*}
\begin{split}
\leftidx{^{\sigma}}U^{\mathrm{low}}(t)f=\sum_{j\leq0}\leftidx{^{\sigma}}U_j^{\mathrm{low}}f.
\end{split}
\end{equation*}
By using Proposition \ref{m-strichartz} with $\sigma=\alpha=\mu$, hence we obtain for $j\geq0$, $0\leq k\leq N$
\begin{equation*}
\|\leftidx{^{\alpha}}U^{\mathrm{high}}_{j,k}(t)f\|_{L^q_t(\R:L^r(M^\circ))}\lesssim
2^{j(s-\mu)} \|f\|_{L^2(M^\circ)},\quad s=s_e, ~s_w.
\end{equation*}
Note that $\mu>s$ and take summation in $j\geq0$ and finite $k$, we prove \eqref{hstri'}.

If $\beta=0$, by using Proposition \ref{m-strichartz} with $\sigma=0$, we obtain for $j\leq0$, 
\begin{equation*}
\|\leftidx{^{\beta}}U_j^{\mathrm{low}}(t)f\|_{L^q_t(\R:L^r(M^\circ))}\lesssim 2^{\epsilon j}\|f\|_{L^2(M^\circ)},
\end{equation*}
 and if $\beta=1-\epsilon$ with $0< \epsilon\ll1$, choose $0<\tilde{\epsilon}<\epsilon=1-\beta$, we use \eqref{stri'} in Proposition \ref{m-strichartz} with $\sigma=\beta=1-\epsilon$ to obtain 
\begin{equation*}
\|\leftidx{^{\beta}}U_j^{\mathrm{low}}(t)f\|_{L^q_t(\R:L^r(M^\circ))}\lesssim
 2^{\tilde{\epsilon} j}\|f\|_{L^2(M^\circ)}.
\end{equation*}
By summing in $j\leq0$, we obtain \eqref{lstri'} with $\beta=0$ and $1-\epsilon$. Hence we have proved \eqref{hstri} in Theorem \ref{thm:hstri}.

\section{Inhomogeneous Strichartz estimates} 

In this section, we prove the inhomogeneous Strichartz estimate in Theorem \ref{thm:ihstri}. To this purpose, we divide into two cases. The first case that $q>\tilde{q}'$ is much 
easier to prove due to the Christ-Kiselev lemma \cite{CK}. The second case when $q=\tilde{q}=2$ is more complicated since the usual dispersive estimate fails due to
the conjugate points. We call the inhomogeneous Strichartz estimate as double endpoint estimate when $q=\tilde{q}=2$, otherwise we call them non-double endpoint inhomogeneous Strichartz estimate. 

\subsection{Inhomogeneous Strichartz estimates for non-double endpoint}
In this subsection, we prove 
\begin{proposition}\label{prop:inh} Let $(q,r,\mu), (\tilde{q},\tilde{r},\tilde{\mu})\in \Lambda_w\cup \Lambda_e $ and at least one of $q,\tilde{q}$ does not equal to $2$,  
the following inequalities hold:\vspace{0.2cm}

$\bullet$ Low frequency estimate
\begin{equation}\label{l-inh}
\begin{split}
\Big\|\int_{\tau<t}\frac{\sin{(t-\tau)\sqrt{H}}}
{\sqrt{H}}\chi(\sqrt{H})F(\tau)d\tau\Big\|_{L^q_tL^{r}_z}\lesssim \|F\|_{L^{\tilde{q}'}_t{L}^{\tilde{r}'}_z},
\end{split}
\end{equation}

$\bullet$ High frequency estimate
\begin{equation}\label{h-inh}
\begin{split}
\Big\|\int_{\tau<t}\frac{\sin{(t-\tau)\sqrt{H}}}
{\sqrt{H}}(1-\chi)(\sqrt{H})F(\tau)d\tau\Big\|_{L^q_tL^{r}_z}\lesssim \|H^{\frac{\mu+\tilde{\mu}-1}2}F\|_{L^{\tilde{q}'}_t{L}^{\tilde{r}'}_z},
\end{split}
\end{equation}
where $\chi\in\mathcal{C}_c^\infty([0,\infty)$ such that $\chi(\lambda) = 1$ for $\lambda \leq 1$ and vanishes when $\lambda\geq2$.  

\end{proposition}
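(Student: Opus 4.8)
The plan is to reduce both \eqref{l-inh} and \eqref{h-inh} to the homogeneous Strichartz estimates already obtained in Section~4, combining the $TT^{*}$ factorization with the Christ--Kiselev lemma \cite{CK}. The decisive elementary observation is that, since $q,\tilde q\in[2,\infty]$ and at least one of them exceeds $2$, we always have $q>\tilde q'$; this strict inequality is exactly the hypothesis needed to apply \cite{CK}, so it suffices to prove the \emph{untruncated} versions of \eqref{l-inh} and \eqref{h-inh}, that is, with $\int_{\R}$ in place of $\int_{\tau<t}$.

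To set up the factorization, write
\begin{equation*}
\frac{\sin(t-\tau)\sqrt{H}}{\sqrt{H}}=\frac{1}{2i}\bigl(e^{i(t-\tau)\sqrt{H}}-e^{-i(t-\tau)\sqrt{H}}\bigr)H^{-1/2},
\end{equation*}
and treat the two signs separately; since they are exchanged by the reflection $t\mapsto-t,\ \tau\mapsto-\tau$, which preserves every norm appearing in \eqref{l-inh}--\eqref{h-inh}, it is enough to handle $e^{i(t-\tau)\sqrt{H}}H^{-1/2}$. Distributing $H^{-1/2}$ symmetrically between two half-wave propagators in the low-frequency case, and using $\tfrac1{\sqrt H}H^{-(\mu+\tilde\mu-1)/2}=H^{-\mu/2}H^{-\tilde\mu/2}$ in the high-frequency case, one obtains (after inserting an auxiliary smooth cutoff equal to $1$ on $\operatorname{supp}\chi$, which turns the $\chi^{2}$ produced by the composition back into $\chi$)
\begin{equation*}
e^{i(t-\tau)\sqrt{H}}H^{-1/2}\chi(\sqrt{H})=\leftidx{^{1/2}}U^{\mathrm{low}}(t)\bigl(\leftidx{^{1/2}}U^{\mathrm{low}}(\tau)\bigr)^{*},
\end{equation*}
\begin{equation*}
e^{i(t-\tau)\sqrt{H}}\,\tfrac1{\sqrt{H}}H^{-\frac{\mu+\tilde\mu-1}{2}}(1-\chi)(\sqrt{H})=\leftidx{^{\mu}}U^{\mathrm{high}}(t)\bigl(\leftidx{^{\tilde\mu}}U^{\mathrm{high}}(\tau)\bigr)^{*}.
\end{equation*}

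With these factorizations the untruncated operator applied to $F$ becomes $\leftidx{^{1/2}}U^{\mathrm{low}}(t)\bigl[\int_{\R}(\leftidx{^{1/2}}U^{\mathrm{low}}(\tau))^{*}F(\tau)\,d\tau\bigr]$ in the low-frequency case, and the analogue with $\leftidx{^{\mu}}U^{\mathrm{high}},\ \leftidx{^{\tilde\mu}}U^{\mathrm{high}}$ in the high-frequency case. I would bound the outer propagator by the homogeneous estimate: $\|\leftidx{^{1/2}}U^{\mathrm{low}}(t)g\|_{L^{q}_{t}L^{r}}\lesssim\|g\|_{L^{2}}$, which follows from \eqref{stri'} with $\sigma=\tfrac12<1$ by summing in $j\le0$ exactly as in the derivation of \eqref{lstri'}, and $\|\leftidx{^{\mu}}U^{\mathrm{high}}(t)g\|_{L^{q}_{t}L^{r}}\lesssim\|g\|_{L^{2}}$, which is \eqref{hstri'} (valid since $(q,r,\mu)\in\Lambda_{w}\cup\Lambda_{e}$); and I would bound the inner integral by dualizing the homogeneous estimate for the data pair, so that $\|\int_{\R}(\leftidx{^{1/2}}U^{\mathrm{low}}(\tau))^{*}F(\tau)\,d\tau\|_{L^{2}}\lesssim\|F\|_{L^{\tilde q'}_{t}L^{\tilde r'}}$ is the adjoint of $\|\leftidx{^{1/2}}U^{\mathrm{low}}(t)g\|_{L^{\tilde q}_{t}L^{\tilde r}}\lesssim\|g\|_{L^{2}}$, and likewise $\|\int_{\R}(\leftidx{^{\tilde\mu}}U^{\mathrm{high}}(\tau))^{*}F(\tau)\,d\tau\|_{L^{2}}\lesssim\|F\|_{L^{\tilde q'}_{t}L^{\tilde r'}}$ is the adjoint of the homogeneous high-frequency estimate for $(\tilde q,\tilde r,\tilde\mu)\in\Lambda_{w}\cup\Lambda_{e}$. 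Composing the two bounds yields the untruncated forms of \eqref{l-inh} and \eqref{h-inh}, and \cite{CK} then upgrades them to the stated retarded estimates.

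Since the homogeneous Strichartz estimates---hence the microlocalized dispersive bounds of Proposition~\ref{prop:m-disper}---are already available, no genuine difficulty remains in this case; the only points requiring care are bookkeeping ones: checking that $q>\tilde q'$ throughout the non-double-endpoint range so that \cite{CK} is applicable, and reconciling the frequency cutoffs ($\chi$ vs.\ $\chi^{2}$, and $1-\chi$ vs.\ $(1-\chi)^{2}$) when splitting $H^{-1/2}$ between the half-wave propagators, which is harmless because the resulting discrepancy is a bounded operator with frequency support in a fixed compact set, controlled by the $j=O(1)$ instances of \eqref{stri}. The real obstacle is postponed to the double-endpoint case $q=\tilde q=2$, where $q=\tilde q'$ makes Christ--Kiselev unavailable and one must instead argue directly with the dispersive estimate, as carried out in the following subsection.
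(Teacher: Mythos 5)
Your proposal follows essentially the same route as the paper: factor $\sin((t-\tau)\sqrt H)/\sqrt H$ into half-wave propagators via $TT^{*}$, bound the outer factor by the homogeneous (microlocalized, summed-in-$j$) Strichartz estimate and the inner integral by its dual, and then pass from the untruncated to the retarded operator with the Christ--Kiselev lemma, using that $q,\tilde q\in[2,\infty]$ with at least one $>2$ forces $q>\tilde q'$. The only cosmetic difference is the cutoff bookkeeping: the paper simply redefines $\leftidx{^{\sigma}}U^{\mathrm{low}}$ and $\leftidx{^{\sigma}}U^{\mathrm{high}}$ with $\chi^{1/2}$ and $(1-\chi)^{1/2}$ so that the $TT^{*}$ composition reproduces $\chi$ and $1-\chi$ exactly, whereas you fix the $\chi$-versus-$\chi^{2}$ mismatch by an auxiliary cutoff; both are harmless and equivalent.
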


\begin{remark} We can obtain a special inhomogeneous Strichartz estimate that we shall require in the next section. For $p\in(1, 1+4/(n-1))$, we have 
\begin{equation}\label{(p+1)-inh}
\begin{split}
\Big\|\int_{\tau<t}\frac{\sin{(t-\tau)\sqrt{H}}}
{\sqrt{H}}F(\tau)d\tau\Big\|_{L^{p+1}_tL^{p+1}_z}\lesssim \|F\|_{L^{\frac{p+1}p}_t{L}^{\frac{p+1}p}_z}.
\end{split}
\end{equation}
Indeed, the low frequency part follows from \eqref{l-inh}. Choose $\mu=\tilde{\mu}=1/2$, then we can check 
$$(p+1,p+1,1/2)\in \Lambda_e, \quad p+1>2$$
when $p\in(1, 1+4/(n-1))$. Hence the high frequency part follows from \eqref{h-inh}.

\end{remark}

\begin{proof}[Proof of Proposition \ref{prop:inh}] We first prove \eqref{l-inh}.
Recall $U(t)=e^{it\sqrt{H}}$, then
\begin{equation*}
\begin{split}
\frac{\sin{(t-\tau)\sqrt{H}}}
{\sqrt{H}}\chi(\sqrt{H})&=H^{-\frac12}\chi(\sqrt{H})(U(t)U(\tau)^*-U(-t)U(-\tau)^*)/2i
\\&=\frac{1}{2i}\big(\leftidx{^{\sigma}}U^{\mathrm{low}}(t)(\leftidx{^{\sigma}}U^{\mathrm{low}}(\tau))^*-\leftidx{^{\sigma}}U^{\mathrm{low}}(-t)(\leftidx{^{\sigma}}U^{\mathrm{low}}(-\tau))^*\big),\  \sigma=1/2,
\end{split}
\end{equation*}
where 
\begin{equation}\label{Ul}
\leftidx{^{\sigma}}U^{\mathrm{low}}(t) = \int_0^\infty e^{it\lambda} \chi^{1/2}(\lambda)\lambda^{-\sigma}
dE_{\sqrt{H}}(\lambda).
\end{equation}
This is just the analog of \eqref{Ulh} with 
$\chi(\lambda)$ there replaced  by $\chi^{1/2}(\lambda)$, which causes no problems. 
Since the other term can be treated similarly,  it suffices to show that
\begin{equation}\label{l-inh'}
\begin{split}
\int_{\tau<t}\leftidx{^{\sigma}}U^{\mathrm{low}}(t)(\leftidx{^{\sigma}}U^{\mathrm{low}}(\tau))^*F(\tau)d\tau,\quad \sigma=1/2
\end{split}
\end{equation}
satisfies the bounds in \eqref{l-inh}.
As before, by using Proposition \ref{m-strichartz} with $\sigma=1/2$, we obtain for $j\leq0$, 
\begin{equation*}
\|\leftidx{^{\sigma}}U_j^{\mathrm{low}}(t)f\|_{L^q_t(\R:L^r(M^\circ))}\lesssim 2^{\epsilon j}\|f\|_{L^2(M^\circ)}, \quad 0\leq \epsilon\ll1,
\end{equation*}
and hence we further have 
\begin{equation*}
\|\leftidx{^{\sigma}}U^{\mathrm{low}}(t)f\|_{L^q_t(\R:L^r(M^\circ))}\lesssim \sum_{j\leq 0}\|\leftidx{^{\sigma}}U_j^{\mathrm{low}}(t)f\|_{L^q_t(\R:L^r(M^\circ))}\lesssim \|f\|_{L^2(M^\circ)}.
\end{equation*}
By the duality, we have the following
\begin{equation*}
\Big\|\int_\R \leftidx{^{\sigma}}U^{\mathrm{low}}(t)(\leftidx{^{\sigma}}U^{\mathrm{low}}(\tau))^*F(\tau) d\tau\Big\|_{L^q_t(\R:L^r(M^\circ))}\lesssim \|F\|_{L^{\tilde{q}'}_t(\R:L^{\tilde{r}'}(M^\circ))}.
\end{equation*}
Under the assumption that at least one of $q,\tilde{q}$ is not $2$, we have $q>\tilde{q}'$. Hence by using Christ-Kiselev lemma \cite{CK}, we obtain 
\begin{equation*}
\Big\|\int_{\tau<t} \leftidx{^{\sigma}}U^{\mathrm{low}}(t)(\leftidx{^{\sigma}}U^{\mathrm{low}}(\tau))^*F(\tau) d\tau\Big\|_{L^q_t(\R:L^r(M^\circ))}\lesssim \|F\|_{L^{\tilde{q}'}_t(\R:L^{\tilde{r}'}(M^\circ))}.
\end{equation*}
Therefore we have shown that \eqref{l-inh'} satisfies the bounds in \eqref{l-inh}, as desired.

Next we prove \eqref{h-inh}. Similarly as above, we write 
\begin{equation*}
\begin{split}
&\frac{\sin{(t-\tau)\sqrt{H}}}
{\sqrt{H}}H^{-\frac{\mu+\tilde{\mu}-1}2}(1-\chi)(\sqrt{H})\\&=H^{-\frac{\mu+\tilde{\mu}}2}(1-\chi)(\sqrt{H})(U(t)U(\tau)^*-U(-t)U(-\tau)^*)/2i
\\&=\frac{1}{2i}\big(\leftidx{^{\mu}}U^{\mathrm{high}}(t)(\leftidx{^{\tilde{\mu}}}U^{\mathrm{high}}(\tau))^*-\leftidx{^{\mu}}U^{\mathrm{high}}(-t)(\leftidx{^{\tilde{\mu}}}U^{\mathrm{high}}(-\tau))^*\big),
\end{split}
\end{equation*}
where 
\begin{equation}\label{Uh}
\leftidx{^{\sigma}}U^{\mathrm{high}}(t) = \int_0^\infty e^{it\lambda} (1-\chi)^{1/2}(\lambda)\lambda^{-\sigma}
dE_{\sqrt{H}}(\lambda).
\end{equation}
Here we have replaced $(1-\chi)(\lambda)$ in \eqref{Ulh} by $(1-\chi)^{1/2}(\lambda)$ which is inconsequential. 
To prove \eqref{h-inh}, it suffices to show
\begin{equation}\label{h-inh'}
\begin{split}
\Big\|\int_{\tau<t}\leftidx{^{\mu}}U^{\mathrm{high}}(t)(\leftidx{^{\tilde{\mu}}}U^{\mathrm{high}}(\tau))^*F(\tau)d\tau\Big\|_{L^q_tL^{r}_z}\lesssim \|F\|_{L^{\tilde{q}'}_t{L}^{\tilde{r}'}_z}.
\end{split}
\end{equation}
Applying Proposition \ref{m-strichartz} with $\sigma=\mu$ and its dual version with $\sigma=\tilde{\mu}$,
we have
for  all $j\geq0$ and $k=0,\cdots, N$
\begin{equation*}
\big\|\leftidx{^{\mu}}U^{\mathrm{high}}_{j,k}(t) f\big\|_{L^q_t(\R:L^r(M^\circ))}\lesssim
2^{j(s-\mu)} \|f\|_{L^2(M^\circ)},\quad s=s_e, s_w,
\end{equation*} 
and 
\begin{equation*}
\Big\|\int_{\R}(\leftidx{^{\tilde{\mu}}}U^{\mathrm{high}}_{j,k}(\tau))^* F(\tau) d\tau\Big\|_{L^2(M^\circ)}\lesssim
2^{j(s-\tilde{\mu})} \|F\|_{L^{\tilde{q}'}_t{L}^{\tilde{r}'}_z},\quad s=s_e, s_w.
\end{equation*} 
Therefore we obtain, for all $k,k'\in\{0,\cdots, N\}$ and $j,j'\geq0$
\begin{equation*}
\Big\|\int_\R \leftidx{^{\mu}}U^{\mathrm{high}}_{j,k}(t)(\leftidx{^{\tilde{\mu}}}U^{\mathrm{high}}_{j',k'}(\tau))^* F(\tau) d\tau\Big\|_{L^q_t(\R:L^r(M^\circ))}\lesssim
2^{j(s-\mu)} 2^{j'(s-\tilde{\mu})}\|F\|_{L^{\tilde{q}'}_t(\R:L^{\tilde{r}'}(M^\circ))}.
\end{equation*}
Let  
\begin{equation}\label{U>}
\leftidx{^{\sigma}}U^{\mathrm{high}}_{\geq,k}(t)=\sum_{j\geq0}\leftidx{^{\sigma}}U^{\mathrm{high}}_{j,k}(t),
\end{equation}
since $\mu,\tilde{\mu}>s$, then we sum over $j$ and $j'$ to show
\begin{equation}\label{est:U>}
\Big\|\int_\R \leftidx{^{\mu}}U^{\mathrm{high}}_{\geq,k}(t)(\leftidx{^{\tilde{\mu}}}U^{\mathrm{high}}_{\geq,k'}(\tau))^* F(\tau) d\tau\Big\|_{L^q_t(\R:L^r(M^\circ))}\lesssim
\|F\|_{L^{\tilde{q}'}_t(\R:L^{\tilde{r}'}(M^\circ))}.
\end{equation}
Further by taking the summation in $k,k'$ which range over a finite set and using Christ-Kiselev lemma with $q>\tilde{q}'$, we thus prove \eqref{h-inh'}.

\end{proof}

\subsection{Inhomogeneous Strichartz estimates on the double endpoint} We prove the following result on the double endpoint  inhomogeneous Strichartz estimate.
\begin{proposition}\label{prop:dinh} Let $(q,r,\mu), (\tilde{q},\tilde{r},\tilde{\mu})\in \Lambda_w\cup \Lambda_e $ and $q=\tilde{q}=2$,  the following inequalities hold:

$\bullet$ Low frequency estimate
\begin{equation}\label{l-dinh}
\begin{split}
\Big\|\int_{\tau<t}\frac{\sin{(t-\tau)\sqrt{H}}}
{\sqrt{H}}\chi(\sqrt{H})F(\tau)d\tau\Big\|_{L^2_tL^{r}_z}\lesssim \|F\|_{L^{2}_t{L}^{\tilde{r}'}_z},
\end{split}
\end{equation}

$\bullet$ High frequency estimate
\begin{equation}\label{h-dinh}
\begin{split}
\Big\|\int_{\tau<t}\frac{\sin{(t-\tau)\sqrt{H}}}
{\sqrt{H}}(1-\chi)(\sqrt{H})F(\tau)d\tau\Big\|_{L^2_tL^{r}_z}\lesssim \|H^{\frac{\mu+\tilde{\mu}-1}2}F\|_{L^{2}_t{L}^{\tilde{r}'}_z},
\end{split}
\end{equation}
where $\chi\in\mathcal{C}_c^\infty([0,\infty)$ such that $\chi(\lambda) = 1$ for $\lambda \leq 1$ and vanishes when $\lambda\geq2$.  

\end{proposition}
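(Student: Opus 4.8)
The plan is to follow the low/high-frequency splitting of Proposition \ref{prop:inh}, the only genuinely new obstruction being that the Christ--Kiselev lemma \cite{CK} is unavailable when $q=\tilde q=2$; for the low frequencies it is replaced by a trivial convolution bound, and for the high frequencies by Keel--Tao's bilinear argument for the \emph{retarded} endpoint inhomogeneous estimate together with a microlocal analysis of the propagator. For \eqref{l-dinh} I would argue exactly as for \eqref{l-inh}: write $\frac{\sin((t-\tau)\sqrt{H})}{\sqrt{H}}\chi(\sqrt{H})$ in terms of $\leftidx{^{1/2}}U^{\mathrm{low}}(t)\big(\leftidx{^{1/2}}U^{\mathrm{low}}(\tau)\big)^*$ and its time reflection, and decompose dyadically. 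By \eqref{dispersivel'} with $\sigma=\frac12$ and the sign chosen so the power of $2^j$ is $2^{2\epsilon j}$, together with Lemma \ref{lem:qq'} to absorb the weight $e^{-(\rho-\delta)d(z,z')}$, the bilinear pairing of the frequency-$2^j$ block is dominated by $C\,2^{2\epsilon j}(1+|t-\tau|)^{-2+2\epsilon}\|F(\tau)\|_{L^{\tilde r'}}\|G(t)\|_{L^{r'}}$ for $0<\epsilon\ll1$. The time kernel $(1+|t-\tau|)^{-2+2\epsilon}$ is integrable, so Young's inequality in $t$ gives the bound with $L^2_t$ on both sides \emph{directly}; passing to the retarded region $\tau<t$ only decreases the integral, and summing the geometric series over $j\le0$ proves \eqref{l-dinh}.

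The high-frequency estimate \eqref{h-dinh} is the main point, and here the obstruction is precisely the retardation $\tau<t$: the non-retarded version of \eqref{h-dinh} (with $\int_\R$ in place of $\int_{\tau<t}$) already follows from Proposition \ref{m-strichartz} by composing \eqref{stri} with its dual and summing the dyadic and microlocal indices — the series converge since $\mu,\tilde\mu>s$ — but Christ--Kiselev cannot upgrade this to $\tau<t$ when $q=\tilde q=2$. To reinstate the retardation I would split the propagator kernel, using \eqref{bean}, into the contribution of $b$, which is negligible by \eqref{beans'}; a near-diagonal piece, microsupported where $d(z,z')$ is below a fixed constant smaller than the distance to the conjugate locus; and an off-diagonal piece, where $d(z,z')$ is bounded below. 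On the near-diagonal piece there are no conjugate points, so the spectral measure admits the Euclidean-type parametrix and the kernel carries the dispersive decay $(2^{-j}+|t-\tau|)^{-(n-1)/2}$ of \eqref{Dispersiveh}--\eqref{Dispersiveh'} (with $2\sigma$ replaced by $\mu+\tilde\mu$); together with the $L^2$ bounds of Proposition \ref{energy} this places us exactly in the hypotheses of Keel--Tao's bilinear argument \cite{KT}, which produces the \emph{retarded} ($\tau<t$) endpoint inhomogeneous estimate. Applied at frequency $2^j$, after rescaling time by $2^{-j}$ (which makes the decay scale-invariant and renders the region $|t-\tau|\ge2$ harmless), it yields a block bound $\lesssim 2^{j(s-\mu)}2^{j(s-\tilde\mu)}\|F\|_{L^2_tL^{\tilde r'}}\|G\|_{L^2_tL^{r'}}$ with $s=s_e$ or $s_w$, and the sum over $j\ge0$ converges precisely because $\mu,\tilde\mu>s$.

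For the off-diagonal piece I would invoke the microlocal analysis announced in the introduction. After the Chen--Hassell partition $\mathrm{Id}=\sum_kQ_k(\sqrt{H})$, classifying the $Q_k(\lambda)$ by the direction in which the bicharacteristic flow transports their wavefront set lets one parametrize $Q_k\,dE_{\sqrt{H}}(\lambda)\,Q_k^*(\lambda)$ away from the diagonal by a phase of \emph{unchanged} sign, so that on the retarded region $t>\tau$ the full phase $\lambda\big((t-\tau)+d(z,z')\big)$ is $\ge\lambda(t-\tau)>0$ and never stationary; the problem then reduces, after discarding the lower-order cross interactions $Q_k\,dE_{\sqrt{H}}\,Q_{k'}^*$ for $k\neq k'$ (which are supported where $d(z,z')$ is bounded below and thus absorbed by the exponential weight), to bounding for each of the finitely many $k$
\[
\Big\|\sum_{j\ge0}\int_{\tau<t}\leftidx{^{\mu}}U^{\mathrm{high}}_{j,k}(t)\big(\leftidx{^{\tilde\mu}}U^{\mathrm{high}}_{j,k}(\tau)\big)^*F(\tau)\,d\tau\Big\|_{L^2_tL^r}\lesssim\|F\|_{L^2_tL^{\tilde r'}}.
\]
Integrating by parts $N$ times in $\lambda$ using \eqref{beans}, and using the hyperbolic exponential factor $e^{-(n-1)d(z,z')/2}$ that dominates the amplitude when $d(z,z')\gtrsim1$ and disposes of the range $t-\tau\sim d(z,z')$, yields a kernel bounded by $C_N\,2^{j[(n+1)/2-\mu-\tilde\mu]}(1+|t-\tau|)^{-N}e^{-(\rho-\delta)d(z,z')}$ for every $N$ — this is the refined dispersive estimate stated as Proposition \ref{prop:Dispersive} below. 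The time kernel is then integrable uniformly in $j$, the weight $e^{-(\rho-\delta)d(z,z')}$ is absorbed by Lemma \ref{lem:qq'}, and interpolating against the $L^2$ bounds of Proposition \ref{energy} makes the power of $2^j$ summable over $j\ge0$; thus $TT^*$ and Young close the off-diagonal piece with $L^2_t$ on both sides, again without any use of Christ--Kiselev. I expect the hard part to be exactly this last step: extracting the ``unchanged-sign'' normal form off the diagonal from the Chen--Hassell parametrix — equivalently, ruling out, after the wavefront-set classification, any stationary point of the phase $\lambda(t-\tau)\pm\lambda d(z,z')$ in the retarded regime — and matching this piece cleanly with the near-diagonal Keel--Tao argument across a transition region whose scale does not depend on $j$.
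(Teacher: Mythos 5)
Your low-frequency argument is essentially the paper's: the $\sigma=\tfrac12$ dispersive bound \eqref{dispersivel'} gives a time kernel $(1+|t-\tau|)^{-2+\epsilon}$ that is integrable, the exponential weight is absorbed by Lemma~\ref{lem:qq'}, and Young in time closes the $L^2_t$-to-$L^2_t$ bound without any need for Christ--Kiselev; passing to $\tau<t$ only decreases the integral. That part is fine.

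The high-frequency argument has a genuine gap, and it is exactly at the point you yourself flag as ``the hard part.'' You propose to \emph{discard} the cross terms $Q_k\,dE_{\sqrt{H}}\,Q_{k'}^*$ for $k\neq k'$ on the grounds that their kernels are microsupported where $d(z,z')$ is bounded below ``and thus absorbed by the exponential weight.'' This does not work: on that set the weight $e^{-(\rho-\delta)d(z,z')}$ is merely a constant. It gives no decay in $|t-\tau|$ and no gain in $j$, so it cannot compensate for the loss of the pointwise dispersive estimate caused by conjugate points. More precisely, by Lemma~\ref{poi} the index pairs $(k,k')$ split into $J_{near}\cup J_{non\textrm{-}inc}\cup J_{non\textrm{-}out}$. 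For $(k,k')\in J_{non\textrm{-}out}$ the Chen--Hassell parametrix has a phase $\Phi(z,z',v)\le -c<0$, so in the retarded region $t>\tau$ the total phase $\lambda\big((t-\tau)+\Phi\big)$ \emph{can} vanish; since one cannot generically extract decay in the auxiliary variable $v$ (this is precisely the conjugate-point degeneracy $m>0$), no integration by parts in $\lambda$ saves you, and the retarded pointwise dispersive bound genuinely fails for those pairs. You also misattribute the ``unchanged-sign phase'' property: for the diagonal pieces $Q_k\,dE\,Q_k^*$, formula \eqref{bean} carries \emph{both} signs $e^{\pm i\lambda d(z,z')}$, so their off-diagonal kernel has no sign-definite phase; the unchanged sign is a feature of the cross terms with $(k,k')\in J_{non\textrm{-}inc}$ (phase $\ge c$) or $J_{non\textrm{-}out}$ (phase $\le -c$), not of the diagonal ones.

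The missing idea — which replaces your attempt to discard the cross terms — is the ``either-or plus subtraction'' mechanism of Lemma~\ref{either}. For each pair $(k,k')$ one can prove \emph{either} the retarded bilinear estimate \eqref{bilinear:s<t} (if $(k,k')\in J_{near}$, via the Keel--Tao argument with the dispersive estimate \eqref{kk'near}; if $(k,k')\in J_{non\textrm{-}inc}$, by integration by parts in $\lambda$ since $(t-\tau)+\Phi\ge c$), \emph{or} the advanced estimate \eqref{bilinear:s>t} (if $(k,k')\in J_{non\textrm{-}out}$, by the same argument with $\tau>t$). Crucially, the \emph{unrestricted} bilinear estimate \eqref{bilinear:U>} holds for every $(k,k')$ — it follows from the microlocalized Strichartz estimates of Proposition~\ref{m-strichartz} and needs no dispersion — so the retarded estimate for the $J_{non\textrm{-}out}$ pairs is recovered by subtracting the advanced one from the unrestricted one. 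Only after this, and after summing the finitely many $(k,k')$, does the proof close. Without this step, the pairs in $J_{non\textrm{-}out}$ are simply left unbounded in your decomposition.
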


\begin{proof} The above argument breaks down here due to the failure of the Christ-Kiselev lemma.  We follow the argument in Keel-Tao \cite{KT} to overcome this obstacle,
but we need the usual dispersive estimates which are known to  be false when there exist conjugate points on the manifold. However we can recover this by following the argument in \cite{HZ}.

We first prove \eqref{l-dinh}.  Recall $\leftidx{^{\sigma}}U^{\mathrm{low}}(t)$ in \eqref{Ul}, as before, it suffices to show
\begin{equation}\label{l-dinh'}
\Big\|\int_{\tau<t} \leftidx{^{\sigma}}U^{\mathrm{low}}(t)(\leftidx{^{\sigma}}U^{\mathrm{low}}(\tau))^*F(\tau) d\tau\Big\|_{L^2_t(\R:L^r(M^\circ))}\lesssim \|F\|_{L^{2}_t(\R:L^{\tilde{r}'}(M^\circ))},\quad \sigma=1/2.
\end{equation}
To show \eqref{l-dinh'},  it is enough to show the bilinear form estimate
\begin{equation}\label{lTFG}
|T(F,G)|\lesssim  \|F\|_{L^{2}_tL^{\tilde{r}'}_z}\|G\|_{L^{2}_tL^{r'}_z},
\end{equation}
where $T(F,G)$ is the bilinear form
\begin{equation}
T(F,G)=\iint_{\tau<t}\langle \leftidx{^{\sigma}}U^{\mathrm{low}}(t)(\leftidx{^{\sigma}}U^{\mathrm{low}}(\tau))^*F(\tau), G(t)\rangle_{L^2}~ d\tau dt.
\end{equation}
Note that 
\begin{equation}\begin{split}
\leftidx{^{\sigma}}U^{\mathrm{low}}(t) (\leftidx{^{\sigma}}U^{\mathrm{low}}(\tau))^* 
&= \int_0^\infty e^{i(t-\tau)\lambda}\chi(\lambda) \lambda^{-2\sigma} dE_{\sqrt{H}}(\lambda)\\
&= \sum_{j\leq 0}\int_0^\infty e^{i(t-\tau)\lambda}\chi(\lambda) \varphi\big(
2^{-j}\lambda\big)  \lambda^{-2\sigma} dE_{\sqrt{H}}(\lambda).
\end{split}\end{equation}
Note that the summation term is close to \eqref{l-express}
\begin{equation}\begin{gathered}
\leftidx{^{\sigma}}U^{\mathrm{low}}_{j}(t) (\leftidx{^{\sigma}}U^{\mathrm{low}}_{j}(\tau))^* = \int_0^\infty e^{i(t-\tau)\lambda}\chi^2(\lambda) \varphi^2\big(
\frac{\lambda}{2^j} \big) \lambda^{-2\sigma} dE_{\sqrt{H}}(\lambda).
\end{gathered}\end{equation}
Therefore we can use the same argument to prove the same dispersive estimate \eqref{dispersivel'}.
Using \eqref{dispersivel'}  with positive sign, we obtain
\begin{equation*}
\begin{split}
&\langle \leftidx{^{\sigma}}U^{\mathrm{low}}(t)(\leftidx{^{\sigma}}U^{\mathrm{low}}(\tau))^*F(\tau), G(t)\rangle_{L^2}\\&\leq
C\sum_{j\leq0} 2^{\epsilon j}(1+|t-\tau|)^{2\sigma-3+\epsilon}\big\|\int e^{-(\rho-\delta)d(z,z')}F(\tau)dg(z')\big\|_{L^{r}}\|G(t)\|_{L^{r'}}
\\&\leq C(1+|t-\tau|)^{2\sigma-3+\epsilon}\|F(\tau)\|_{L^{\tilde{r}'}}\|G(t)\|_{L^{r'}}.
\end{split}
\end{equation*}
By using H\"older's and Young's inequalities and the fact $\sigma=1/2$ and $0<\epsilon\ll1$, we obtain 
\begin{equation*}
\begin{split}
|T(F,G)|&\lesssim
\iint_{\tau<t}(1+|t-\tau|)^{2\sigma-3+\epsilon}\|F(\tau)\|_{L^{r'}}\|G(t)\|_{L^{r'}}dtd\tau\\&
\lesssim \|F\|_{L^{2}_tL^{\tilde{r}'}}\|G\|_{L^{2}_tL^{r'}}.
\end{split}
\end{equation*}
This proves \eqref{lTFG}, and hence \eqref{l-dinh}.\vspace{0.2cm}

We next prove \eqref{h-dinh}.  Recall $\leftidx{^{\sigma}}U^{\mathrm{high}}(t)$ in \eqref{Uh}, as before, it suffices to show
\begin{equation}\label{h-dinh'}
\begin{split}
\Big\|\int_{\tau<t}\leftidx{^{\mu}}U^{\mathrm{high}}(t)(\leftidx{^{\tilde{\mu}}}U^{\mathrm{high}}(\tau))^*F(\tau)d\tau\Big\|_{L^2_tL^{r}_z}\lesssim \|F\|_{L^{2}_t{L}^{\tilde{r}'}_z}.
\end{split}
\end{equation}
To show \eqref{h-dinh'},  it is enough to show the bilinear form estimate
\begin{equation}\label{hTFG}
|T(F,G)|\lesssim  \|F\|_{L^{2}_tL^{\tilde{r}'}_z}\|G\|_{L^{2}_tL^{r'}_z},
\end{equation}
where $T(F,G)$ is the bilinear form
\begin{equation}
T(F,G)=\iint_{\tau<t}\langle \leftidx{^{\mu}}U^{\mathrm{high}}(t)(\leftidx{^{\tilde{\mu}}}U^{\mathrm{high}}(\tau))^*F(\tau), G(t)\rangle_{L^2}~ d\tau dt.
\end{equation}
Note that 
\begin{equation}\begin{split}
&\leftidx{^{\mu}}U^{\mathrm{high}}(t) (\leftidx{^{\tilde{\mu}}}U^{\mathrm{high}}(\tau))^*
\\&= \sum_{k,k'=0}^N \int_0^\infty e^{i(t-\tau)\lambda}(1-\chi)(\lambda) \lambda^{-(\mu+\tilde{\mu})} Q_k(\lambda) dE_{\sqrt{H}}(\lambda)Q_{k'}(\lambda)^*\\
&= \sum_{j\geq 0}\sum_{k,k'=0}^N \int_0^\infty e^{i(t-\tau)\lambda}(1-\chi)(\lambda) \varphi(2^{-j}\lambda)\lambda^{-(\mu+\tilde{\mu})} Q_k(\lambda) dE_{\sqrt{H}}(\lambda)Q_{k'}(\lambda)^* ,
\end{split}\end{equation}
in which the summation term is close to \eqref{h-express}
\begin{equation*}\begin{gathered}
\leftidx{^{\sigma}}U^{\mathrm{high}}_{j,k}(t) (\leftidx{^{\sigma}}U^{\mathrm{high}}_{j,k}(s))^* = \int_0^\infty e^{i(t-s)\lambda}(1-\chi)^2(\lambda) \varphi^2\big(
\frac{\lambda}{2^j} \big) \lambda^{-2\sigma}
Q_k(\lambda) dE_{\sqrt{H}}(\lambda) Q_k(\lambda)^* .
\end{gathered}\end{equation*}
The difference between the powers of functions $1-\chi$ and $\varphi$ is harmless. From Lemma \ref{poi} below, the case ``near-diagonal" ($k$ is close to $k'$) satisfies the same property of the case $k=k'$, thus it also leads to\eqref{Dispersiveh} and \eqref{Dispersiveh'}, hence it proves \eqref{hTFG}. 
In the case ``off diagonal"  in which the conjugate points are not separated, we can not prove the similar dispersive estimate like \eqref{Dispersiveh} and \eqref{Dispersiveh'}. However,  we can prove the following which  also leads to \eqref{hTFG}.

\begin{lemma}\label{either} Let $\leftidx{^{\sigma}}U_{\geq,k}^{\mathrm{high}}(t)$ be defined as in \eqref{U>}, then for each pair $(k,k')\in \{ 0, 1, \dots, N \}^2 $
there exists a constant $C$ such that, either
\begin{equation}\label{bilinear:s<t}
\iint_{\tau<t}\langle \leftidx{^{\mu}}U_{\geq,k}^{\mathrm{high}}(t) (\leftidx{^{\tilde{\mu}}}U_{\geq,k'}^{\mathrm{high}}(\tau))^* F(\tau), G(t)\rangle_{L^2}~ d\tau dt\leq C 
\|G\|_{L^2_\tau L^{r'}_z}\|F\|_{L^2_tL^{\tilde{r}'}_z},
\end{equation}
or
\begin{equation}\label{bilinear:s>t}
\iint_{\tau>t}\langle \leftidx{^{\mu}}U_{\geq,k}^{\mathrm{high}}(t) (\leftidx{^{\tilde{\mu}}}U_{\geq,k'}^{\mathrm{high}}(\tau))^*F(\tau), G(t)\rangle_{L^2}~ d\tau dt\leq C \|G\|_{L^2_\tau L^{r'}_z}\|F\|_{L^2_tL^{\tilde{r}'}_z}.
\end{equation}
\end{lemma}
We postpone the proof for a moment. Now we see how Lemma \ref{either} implies \eqref{hTFG}. On the one hand,  for every pair $(k,k')$, we have by \eqref{est:U>}
\begin{equation}\label{bilinear:U>}
\iint \langle \leftidx{^{\mu}}U_{\geq,k}^{\mathrm{high}}(t) (\leftidx{^{\tilde{\mu}}}U_{\geq,k'}^{\mathrm{high}}(\tau))^* F(\tau), G(t)\rangle_{L^2}~ d\tau dt\leq C 
\|G\|_{L^2_\tau L^{r'}_z}\|F\|_{L^2_tL^{\tilde{r}'}_z}.
\end{equation}
Hence for every pair $(k,k')$, 
 by \eqref{bilinear:s<t} or subtracting \eqref{bilinear:s>t} from \eqref{bilinear:U>}, we obtain
\begin{equation*}
\iint_{\tau<t}\langle \leftidx{^{\mu}}U_{\geq,k}^{\mathrm{high}}(t) (\leftidx{^{\tilde{\mu}}}U_{\geq,k'}^{\mathrm{high}}(\tau))^* F(\tau), G(t)\rangle_{L^2}~ d\tau dt\leq C 
\|G\|_{L^2_\tau L^{r'}_z}\|F\|_{L^2_tL^{\tilde{r}'}_z}.
\end{equation*}
Finally by summing over all $k$ and $k'$, we obtain \eqref{hTFG}. Once we prove Lemma \ref{either}, we complete the proof of Proposition \ref{prop:dinh}. 
\end{proof}

To prove Lemma \ref{either}, we need a result about the dispersive estimates.  To state and prove the dispersive estimates, we need
to categorize all microlocalization pairs $\{Q_k,Q_{k'}\}_{k,k'=0}^N$ and the property of spectral measure.

\begin{lemma}\label{poi}
The partition of the identity $Q_k(\lambda)$ can be chosen so that
the pairs of indices $(k,k')$, $1 \leq k,k' \leq N$, can be divided
into three classes,
$$
\{ 1, \dots, N \}^2 = J_{near} \cup J_{not-out} \cup J_{not-inc},
$$
so that
\begin{itemize}
\item if $(k,k') \in J_{near}$, then $Q_k(\lambda) \specl Q_{k'}(\lambda)^*$ satisfies the conclusions of Proposition~\ref{prop:localized spectral measure};

\item if $(k,k') \in J_{non-inc}$, then $Q_k(\lambda)$ is not incoming-related to $Q_{k'}(\lambda)$ in the sense that no point in the operator wavefront set (microlocal support)
of $Q_k(\lambda)$ is related to a point in the operator wavefront
set of $Q_{k'}(\lambda)$ by backward bicharacteristic flow;

\item if $(k,k') \in J_{non-out}$, then $Q_k(\lambda)$ is not outgoing-related to $Q_{k'}(\lambda)$ in the sense that no point in the operator wavefront set  of
$Q_k(\lambda)$ is related to a point in the operator wavefront set
of $Q_{k'}(\lambda)$ by forward bicharacteristic flow.
\end{itemize}
\end{lemma}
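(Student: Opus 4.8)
\textbf{Proof plan for Lemma~\ref{poi}.} The plan is to take the Chen--Hassell partition of \eqref{partition}, refine it so that each piece is microlocalised in an arbitrarily small region of phase space, and then separate the pairs $(k,k')$ according to the bicharacteristic geometry, using the non-trapping hypothesis to exclude the ``doubly related'' possibility. First I would recall from \cite[Section~6.1]{CH2} that each $Q_k(\lambda)$ is a pseudodifferential operator (in the relevant scattering/$0$-calculus) microlocalised in an $\epsilon$-neighbourhood of a point $\hat q_k$ of the (appropriately compactified) cosphere bundle of $M$, and that the parametrix construction producing the normal form \eqref{bean} for $Q_k(\lambda)\,dE_{\sqrt{H}}(\lambda)\,Q_k(\lambda)^*$ is carried out microlocally in a fixed-size neighbourhood of $\hat q_k$, uniformly in $\lambda$. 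Since that cosphere bundle is compact, for any prescribed $\epsilon>0$ we may subdivide each $Q_k$ into a finite sum of operators microlocalised in sets of diameter $<\epsilon$; this only enlarges the finite number $N$ and leaves \eqref{partition} intact. So we may assume each $\mathrm{WF}'(Q_k(\lambda))$ has diameter $<\epsilon$, with $\epsilon>0$ to be fixed below.

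Next I would declare $(k,k')\in J_{near}$ whenever $\mathrm{dist}(\hat q_k,\hat q_{k'})<\epsilon_0$, where $\epsilon_0>0$ is the fixed, geometry‑dependent radius on which the microlocal parametrix of \cite{CH2} for $dE_{\sqrt{H}}(\lambda)$ is valid: for such pairs the oscillatory factor and the amplitude bounds seen by $Q_k(\lambda)\,dE_{\sqrt{H}}(\lambda)\,Q_{k'}(\lambda)^*$ are exactly those of the diagonal case $k=k'$, so this composition again satisfies the conclusions of Proposition~\ref{prop:localized spectral measure}. It remains to treat pairs with $\mathrm{dist}(\hat q_k,\hat q_{k'})\geq\epsilon_0$, and for these I claim $Q_k(\lambda)$ cannot be simultaneously incoming‑related and outgoing‑related to $Q_{k'}(\lambda)$. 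The microlocalisations near the incoming and outgoing radial points over $\partial M$ dispose of themselves: nothing lies upstream of the incoming radial set and nothing lies downstream of the outgoing radial set under the bicharacteristic flow, so such $Q_k$ fail one of the two relations outright; place them in $J_{non-out}$, respectively $J_{non-inc}$. For the remaining $Q_k$, whose microlocal supports lie in a fixed compact region of phase space away from the radial points, suppose toward a contradiction that $Q_k(\lambda)$ were both incoming‑ and outgoing‑related to $Q_{k'}(\lambda)$. Then there would exist $q,\tilde q\in\mathrm{WF}'(Q_k(\lambda))$, $q',\tilde q'\in\mathrm{WF}'(Q_{k'}(\lambda))$ and times $t_1,t_2>0$ with $q'=\Phi_{t_1}(q)$ and $\tilde q=\Phi_{t_2}(\tilde q')$, where $\Phi_s$ denotes the bicharacteristic flow; since the microlocal supports have diameter $<\epsilon$ while their base points are at distance $\geq\epsilon_0$, one gets $t_1+t_2\geq c(\epsilon_0)>0$ and $\Phi_{t_1+t_2}(q)$ lies within $O(\epsilon)$ of $q$. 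By the non‑trapping hypothesis the sojourn time of any bicharacteristic in a fixed compact region is uniformly bounded, so $t_1,t_2$ stay in a fixed compact interval; letting $\epsilon\downarrow 0$ along a refining sequence of partitions and extracting a limit therefore produces a periodic bicharacteristic, i.e.\ a closed geodesic in $M^\circ$, contradicting non‑trapping. Hence, taking $\epsilon$ small enough, every non‑near pair $(k,k')$ fails at least one relation, and we put it in $J_{non-inc}$ if it is not incoming‑related and in $J_{non-out}$ otherwise. This yields $\{1,\dots,N\}^2=J_{near}\cup J_{non-out}\cup J_{non-inc}$, as required.

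\textbf{Main obstacle.} The delicate point is the compactness/limiting step: one must make precise that the intervening flow times remain in a fixed compact interval as the partition is refined — this is exactly where the non‑trapping condition does its work, and it requires genuine care near the radial points over $\partial M$, where $\Phi_s$ degenerates and the compactness argument must be run in the compactified phase space rather than in $S^*M^\circ$. A secondary point is checking that the Chen--Hassell parametrix for $dE_{\sqrt{H}}(\lambda)$ is uniform in $\lambda$ on a fixed‑size phase‑space neighbourhood, so that the radius $\epsilon_0$ defining $J_{near}$ can be chosen independent of $\lambda$; both points are handled within the scattering/b‑calculus framework of \cite{CH1,CH2}, paralleling the asymptotically conic treatment in \cite{HZ}.
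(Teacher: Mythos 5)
The paper does not actually prove this lemma; its stated ``proof'' is a pointer: the result is said to be an analogue of \cite[Lemma 8.2]{HZ}, proved for the asymptotically hyperbolic setting in \cite{Chen} and going back to \cite{GH}. You, by contrast, have supplied an actual argument, and its skeleton is faithful to how the cited works proceed: refine the Chen--Hassell partition so each $Q_k$ is microlocalised in a small phase-space ball, declare a pair near if the microlocal supports are within the radius on which the diagonal parametrix of \cite{CH2} applies, handle the pieces near the incoming/outgoing radial sets by the trivial observation that nothing flows out of one or into the other, and rule out the remaining doubly-related pairs by a near-periodic-orbit contradiction with non-trapping. So this is essentially the cited route, reconstructed.

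One step that you raise as the ``main obstacle'' deserves to be named more precisely, because it is not merely a compactness formality. The bound $t_1,t_2\le T_0$ on the intermediate flow times does not follow from non-trapping alone: one must know that a bicharacteristic connecting two points of the fixed compact region $K_0=\{x\ge\eta\}$ cannot first escape toward $\partial M$ and then return. What forbids this is the convexity at infinity built into the asymptotically hyperbolic structure: for $x$ small the (rescaled) radial momentum $\mu=x\,\partial_x$-dual variable acquires a definite sign along the $0$-bicharacteristic flow, so $x$ becomes monotone and the trajectory, once outward-directed with $x<c\eta$, never re-enters $\{x\ge\eta\}$. This is the property that keeps the near-periodic orbits in a slightly enlarged compact set $K_1\supset K_0$ so that the limit you extract is a genuine closed bicharacteristic in $T^*M^\circ$ rather than a concatenation of homoclinics through the radial set in the compactified phase space. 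Once this monotonicity is invoked, the uniform sojourn-time bound is standard and your limiting argument closes. A second, smaller point: the lemma concerns $1\le k,k'\le N$, with $Q_0$ (the interior piece) deliberately excluded; your decomposition should be read as applying only to the boundary-microlocalised pieces, consistent with how $k=0$ is treated separately in Proposition~\ref{prop:Dispersive}. With those two clarifications your outline matches what is proved in \cite{GH,Chen} and transplanted in \cite{HZ}.
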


\begin{proof}
This is an analogue of \cite[Lemma 8.2]{HZ} which is stated in the asymptotically conic manifold.
The proof of the non-trapping asymptotically hyperbolic manifold is given in \cite{Chen} which is essentially due to \cite{GH}.

\end{proof}

Using the not-incoming or not-outgoing property of $Q_k(\lambda)$ with respect to $Q_{k'}(\lambda)$,
one obtain a similar lemma \cite[Lemma 8.5]{HZ} for spectral measure. We omit the details but we point out the key idea which
also was used in \cite{Chen} considering the endpoint inhomogeneous Strichartz estimate for Schr\"odinger on the same setting considered here.

The essential key point is that the phase function in the oscillation expression of the Schwartz kernel of  $Q_k(\lambda) \specl Q_{k'}(\lambda)^*$ has
an unchanged sign when $(k,k') \in J_{non-inc}$ or $(k,k') \in J_{non-out}$. More precisely, there exists a small constant $c>0$ such that the phase function $\Phi\geq c$ when $(k,k') \in J_{non-out}$
and $\Phi\leq -c$ when $(k,k') \in J_{non-inc}$. For simple, we only take one example to illustrate the idea.   If $Q_k$
is not incoming-related to $Q_{k'}$, we only consider 
\begin{equation*}
\begin{split}
Q_k(\lambda) dE_{\sqrt{H}}(\lambda)Q_{k'}(\lambda)^*=\int_{\R^m} e^{i\lambda\Phi(z,z',v)}\lambda^{n-1+\frac m2}a(\lambda,z,z',v) dv,
\end{split}
\end{equation*}
where $\Phi(z,z',v)\geq c>0$ and $|(\lambda\partial_\lambda)^\alpha a|\leq C_\alpha e^{-(\rho-\delta)d(z,z')}$ where $\rho=(n-1)/2$ and $0<\delta\ll1$.
Here the parameter $0\leq m\leq n-1$ is connected to the conjugate points which is the degenerate rank of the projection from the phase space to the base. If one review the previous result in \cite{HZ} and reference therein, one will find that $m=0$ if there is no
conjugate points in the manifold, then the expression will be similar to the case $k=k'$ in which the conjugate points are separated. If $m>0$, then it brings a difficulty in showing the 
dispersive estimate when $\lambda\to\infty$. However,  if  we restricted to $\tau<t$, then the microlocalized wave propagator 
\begin{equation*}
\begin{split}
\int_0^\infty e^{i(t-\tau)\lambda}\int_{\R^m} e^{i\lambda\Phi(z,z',v)}\lambda^{n-1+\frac m2}a(\lambda,z,z',v) dv d\lambda
\end{split}
\end{equation*}
has the phase function satisfying $(t-\tau)+\Phi\geq \max\{|t-\tau|,c\}$ due to the fact that $\Phi$ and $t-\tau$ have the same signs. Hence we can overcome the difficulties by integration by parts. More precisely, we shall prove that

\begin{proposition}\label{prop:Dispersive} 
Let $\rho=(n-1)/2$ and $0<\delta\ll1$. There exist a constant $C$ independent of $t, z, z'$ for all
$(k,k')\in \{0,1,\cdots, N\}^2, j\geq0$ such that
the following pointwise estimates  hold for any $K\geq 0$:

\begin{itemize}
\item If $k=0$ or $k'=0$ or  $(k,k') \in J_{near}$, then for all $t \neq \tau$  we have
\begin{equation}\label{kk'near}
\begin{split}
\Big|\int_0^\infty e^{i(t-\tau)\lambda}&(1-\chi)(\lambda) \varphi(2^{-j}\lambda)\lambda^{-(\mu+\tilde{\mu})} Q_k(\lambda) dE_{\sqrt{H}}(\lambda)Q_{k'}(\lambda)^*\Big|
\\&\lesssim \begin{cases} 
2^{j[\frac{n+1}2-(\mu+\tilde{\mu})]}(2^{-j}+|t-\tau|)^{-\frac{n-1}2}e^{-(\rho-\delta)d(z,z')}, \quad |t-\tau|\leq 2;\\
2^{j[\frac{n+1}2-(\mu+\tilde{\mu})]}|t-\tau|^{-K}e^{-(\rho-\delta)d(z,z')}, \quad |t-\tau|\geq 2.
\end{cases}
\end{split}
\end{equation}

\item If $(k,k')\in J_{non-out}$, that is, $Q_k$ is not outgoing related to
$Q_{k'}$, and $t<\tau$, then
\begin{equation}\label{kk'out}
\begin{split}
\Big|\int_0^\infty e^{i(t-\tau)\lambda}&(1-\chi)(\lambda) \varphi(2^{-j}\lambda)\lambda^{-(\mu+\tilde{\mu})} Q_k(\lambda) dE_{\sqrt{H}}(\lambda)Q_{k'}(\lambda)^*\Big|
\\&\lesssim \begin{cases} 
2^{j[\frac{n+1}2-(\mu+\tilde{\mu})]}(2^{-j}+|t-\tau|)^{-\frac{n-1}2}e^{-(\rho-\delta)d(z,z')}, \quad |t-\tau|\leq 2;\\
2^{j[\frac{n+1}2-(\mu+\tilde{\mu})]}|t-\tau|^{-K}e^{-(\rho-\delta)d(z,z')}, \quad |t-\tau|\geq 2.
\end{cases}
\end{split}
\end{equation}

\item Similarly, if $(k,k')\in J_{non-inc}$, that is, $Q_k$ is not incoming
related to $Q_{k'}$, and $\tau<t$, then
\begin{equation}\label{kk'in}
\begin{split}
\Big|\int_0^\infty e^{i(t-\tau)\lambda}&(1-\chi)(\lambda) \varphi(2^{-j}\lambda)\lambda^{-(\mu+\tilde{\mu})} Q_k(\lambda) dE_{\sqrt{H}}(\lambda)Q_{k'}(\lambda)^*\Big|
\\&\lesssim \begin{cases} 
2^{j[\frac{n+1}2-(\mu+\tilde{\mu})]}(2^{-j}+|t-\tau|)^{-\frac{n-1}2}e^{-(\rho-\delta)d(z,z')}, \quad |t-\tau|\leq 2;\\
2^{j[\frac{n+1}2-(\mu+\tilde{\mu})]}|t-\tau|^{-K}e^{-(\rho-\delta)d(z,z')}, \quad |t-\tau|\geq 2.
\end{cases}
\end{split}
\end{equation}
\end{itemize}

\end{proposition}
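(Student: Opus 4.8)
The plan is to split along the three cases of the statement. The near-diagonal case (which also covers $k=0$ or $k'=0$) reduces to the work already done in Lemma~\ref{lem:dispersive}; the not-outgoing and not-incoming cases are mirror images of one another under $t\leftrightarrow\tau$, and both are handled by a single oscillatory-integral argument based on the representation of $Q_k(\lambda)\,dE_{\sqrt{H}}(\lambda)\,Q_{k'}(\lambda)^*$ supplied by Lemma~\ref{poi}.

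\emph{Near-diagonal case.} If $(k,k')\in J_{near}$, Lemma~\ref{poi} says that $Q_k(\lambda)\,dE_{\sqrt{H}}(\lambda)\,Q_{k'}(\lambda)^*$ has exactly the structure \eqref{bean}--\eqref{beans'} of Proposition~\ref{prop:localized spectral measure}, and the same holds when $k=0$ or $k'=0$ by the construction of the partition. Inserting $(1-\chi)(\lambda)\varphi(2^{-j}\lambda)\lambda^{-(\mu+\tilde{\mu})}$ turns the left side of \eqref{kk'near} into a sum of integrals of the form \eqref{a}--\eqref{b} with $2\sigma$ replaced by $\mu+\tilde{\mu}$, so I would re-run verbatim the integration-by-parts estimates in the proof of Lemma~\ref{lem:dispersive} that produced \eqref{dispersiveh}--\eqref{dispersiveh'}: split according to $d(z,z')\lessgtr1$ and according to whether $|t-\tau|\sim d(z,z')$, and integrate by parts $N$ times in $\lambda$ with $N=(n-1)/2$ on the main range. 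The weight $e^{-(\rho-\delta)d(z,z')}$ is free, since both lines of \eqref{beans} dominate it: it is $\sim1$ for $d(z,z')\le1$, and $e^{-(n-1)d(z,z')/2}\le e^{-(\rho-\delta)d(z,z')}$ for $d(z,z')\ge1$.

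\emph{Not-outgoing / not-incoming case.} Take $(k,k')\in J_{non-out}$ with $t<\tau$; the case $(k,k')\in J_{non-inc}$ with $\tau<t$ is identical after interchanging $t$ and $\tau$. Here I would use the oscillatory representation described before the statement (the analogue of \cite[Lemma~8.5]{HZ}),
\begin{equation*}
Q_k(\lambda)\,dE_{\sqrt{H}}(\lambda)\,Q_{k'}(\lambda)^*(z,z')=\int_{\R^m}e^{i\lambda\Phi(z,z',v)}\lambda^{n-1+\frac m2}a(\lambda,z,z',v)\,dv,
\end{equation*}
in which $a$ is compactly supported in $v$ with $|(\lambda\partial_\lambda)^\alpha a|\le C_\alpha e^{-(\rho-\delta)d(z,z')}$, and the fibre phase $\Phi$ has a fixed sign, opposite to that of $t-\tau$; consequently, after inserting $e^{i(t-\tau)\lambda}$ the total $\lambda$-phase obeys $|(t-\tau)+\Phi|\ge\max\{|t-\tau|,c\}$ uniformly in $v$. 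The estimate then follows in two steps. First, evaluate the $v$-integral by (non-degenerate) stationary phase: the $m$ fibre directions contribute $\lambda^{-m/2}$, restoring the spectral-measure weight $\lambda^{n-1}$ and leaving a critical-value phase $(t-\tau)+\Phi^*(z,z')$ of modulus still $\ge\max\{|t-\tau|,c\}$. Second, integrate by parts repeatedly in $\lambda$: there are no boundary terms owing to the cutoffs $(1-\chi)(\lambda)\varphi(2^{-j}\lambda)$, differentiating the symbol is harmless by the $\lambda\partial_\lambda$-bounds on $a$, and each step gains a factor $\big(2^j\max\{|t-\tau|,c\}\big)^{-1}$; performing $(n-1)/2$ integrations by parts when $|t-\tau|\le2$ produces the first line of \eqref{kk'out}, and performing arbitrarily many when $|t-\tau|\ge2$ produces the second, with $e^{-(\rho-\delta)d(z,z')}$ carried along from the bound on $a$.

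\emph{Main obstacle.} The structural inputs — Lemma~\ref{poi} together with the representation above, in which $\Phi$ is sign-definite and sign-matched to $t-\tau$ on the relevant half-line — are the conceptual core, but they are imported from \cite{HZ,Chen,GH} and I would not reprove them. The point that genuinely requires care in \emph{this} proof is the bookkeeping of powers of $\lambda$ in the not-outgoing / not-incoming case: one must check that the $\lambda^{-m/2}$ gained from the fibre integration exactly cancels the extra $\lambda^{m/2}$ in $\lambda^{n-1+m/2}$, so that the final exponent of $2^j$ is $\tfrac{n+1}2-(\mu+\tilde{\mu})$ and the time decay is $(2^{-j}+|t-\tau|)^{-(n-1)/2}$, i.e.\ identical to the diagonal bound \eqref{dispersiveh}. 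This is precisely where the non-degeneracy of $\Phi$ in the fibre variables — a feature of the construction of the $Q_k$ — is used, and it is the step I would carry out in full detail.
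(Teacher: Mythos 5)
The near-diagonal case of your proposal matches the paper's argument: when $k=0$, $k'=0$, or $(k,k')\in J_{near}$, one invokes Lemma~\ref{poi} to get the structure of Proposition~\ref{prop:localized spectral measure} and reruns the integration-by-parts estimates from Lemma~\ref{lem:dispersive}; this is exactly what the paper does.

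For the not-incoming/not-outgoing case your route diverges from the paper's, and the divergence creates a gap. You propose to first evaluate the $v$-integral by \emph{non-degenerate} stationary phase, gaining $\lambda^{-m/2}$ so as to ``restore'' the weight $\lambda^{n-1}$ and then match the near-diagonal dispersive bound. But the paper never asserts — and the setup makes it unlikely — that the fibre phase $\Phi(z,z',v)$ has non-degenerate critical points in $v$. On the contrary, the paper explicitly says $m$ is ``the degenerate rank of the projection from the phase space to the base'' and that $m>0$ corresponds to the presence of conjugate points; conjugate points are precisely where the $v$-Hessian of $\Phi$ can degenerate, so the stationary-phase step you flag as ``the step I would carry out in full detail'' would in general fail. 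Your stated ``main obstacle'' (cancelling $\lambda^{m/2}$) is also not actually an obstacle in the paper's argument.

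The paper sidesteps all of this. Because $\Phi\ge\epsilon>0$ and $t-\tau>0$ have matching signs, the full $\lambda$-phase satisfies $(t-\tau)+\Phi\ge\max\{|t-\tau|,\epsilon\}$, and one simply integrates by parts $K$ times in $\lambda$, bounding the $v$-integral trivially by the compact support of $a$. This yields
\[
\lesssim\; 2^{j(n+\frac m2-\mu-\tilde\mu)}\,\bigl(2^{j}(|t-\tau|+\epsilon)\bigr)^{-K}\,e^{-(\rho-\delta)d(z,z')},
\]
and since $K$ is free, the factor $\bigl(2^{j}(|t-\tau|+\epsilon)\bigr)^{-K}$ absorbs any power of $2^j$ — in particular the stray $2^{jm/2}$ — without any cancellation. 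Taking $K$ large gives both lines of \eqref{kk'in}. So the $\lambda^{m/2}$ never needs to be matched against a $\lambda^{-m/2}$; the off-diagonal kernel is non-stationary, and arbitrary decay in $2^j|t-\tau|$ (with $|t-\tau|$ replaced by the floor $\epsilon$) makes the exponent bookkeeping moot. Replacing your stationary-phase step with this direct non-stationary-phase integration by parts would close the gap and reproduce the paper's proof.
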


Now we prove Lemma~\ref{either} assuming Proposition \ref {prop:Dispersive}.

\begin{proof}[Proof of Lemma~\ref{either}] The main argument is to repeat the argument in the proof of Proposition \ref{m-strichartz} with $j\geq0$ due to \cite{KT}  if we have the dispersive estimate. In the case that $(k,k') \in J_{near}$, 
we have the dispersive estimate \eqref{kk'near}.
We repeat the argument in the proof of Proposition \ref{m-strichartz} and sum in $j\geq0$ to obtain
\eqref{bilinear:s<t}.  We would like to remark that $\mu,\tilde{\mu}>s$ ensures the summation in $j\geq0$ converges. If $(k,k') \in J_{non-inc}$, we
obtain \eqref{bilinear:s<t} 
due to the dispersive estimate \eqref{kk'in} when $\tau < t$.
Finally, in the case that $(k,k') \in J_{non-out}$, we obtain
\eqref{bilinear:s>t} since we have the dispersive estimate
\eqref{kk'out} for $\tau> t$. 

\end{proof}

\begin{proof} [Proof of  Proposition \ref{prop:Dispersive}] The proof is modified from the proof for Schr\"odinger equation in \cite[Lemma 8.6]{HZ} adapted to the wave equation.

We first prove \eqref{kk'near}. If one of $k, k'$ equals $0$, we have the expression of microlocalized spectral mearsue in Proposition \ref{prop:localized spectral measure} since
the support of $Q_0$ far away the boundary. From above result, if $(k,k') \in J_{near}$, by Lemma \ref{poi}, we also have the expression of microlocalized spectral mearsue in Proposition \ref{prop:localized spectral measure}.
Hence we can prove \eqref{kk'near} by using the same argument of proving \eqref{dispersiveh} and \eqref{dispersiveh'} in  Lemma \ref{lem:dispersive}. We omit the details here. \vspace{0.2cm}

We only prove \eqref{kk'in} since \eqref{kk'out} follows  from the same argument. Assume that $Q_k$
is not incoming-related to $Q_{k'}$. 
In this case, for the sake of simplicity, we only consider 
\begin{equation*}
\begin{split}
Q_k(\lambda) dE_{\sqrt{H}}(\lambda)Q_{k'}(\lambda)^*=\int_{\R^m} e^{i\lambda\Phi(z,z',v)}\lambda^{n-1+\frac m2}a(\lambda,z,z',v) dv,
\end{split}
\end{equation*}
where $\Phi(z,z',v)\geq \epsilon>0$, $0\leq m\leq n-1$ and $a$ is a smooth function which is compactly supported in the  $v$ such that 
$|(\lambda\partial_\lambda)^\alpha a|\leq C_\alpha e^{-(\rho-\delta)d(z,z')}$. For example, see \cite[(8-13), Lemma 8.5]{HZ}.
Then we need to show that for $\tau<t$ and $j\geq0$
\begin{equation}\label{kk'in'}
\begin{split}
\Big|\int_0^\infty e^{i(t-\tau)\lambda}&(1-\chi)(\lambda) \varphi(2^{-j}\lambda)\lambda^{-(\mu+\tilde{\mu})} \int_{\R^m} e^{i\lambda\Phi(z,z',v)}\lambda^{n-1+\frac m2}a(\lambda,z,z',v) dv d\lambda\Big|
\\&\lesssim \begin{cases} 
2^{j[\frac{n+1}2-(\mu+\tilde{\mu})]}(2^{-j}+|t-\tau|)^{-\frac{n-1}2}e^{-(\rho-\delta)d(z,z')}, \quad |t-\tau|\leq 2;\\
2^{j[\frac{n+1}2-(\mu+\tilde{\mu})]}|t-\tau|^{-K}e^{-(\rho-\delta)d(z,z')}, \quad |t-\tau|\geq 2.\\
\end{cases}
\end{split}
\end{equation}
Indeed, we can directly obtain by integration by parts
 \begin{equation*}
\begin{split}
&\Big|\int_0^\infty e^{i(t-\tau)\lambda}(1-\chi)(\lambda) \varphi(2^{-j}\lambda)\lambda^{-(\mu+\tilde{\mu})} \int_{\R^m} e^{i\lambda\Phi(z,z',v)}\lambda^{n-1+\frac m2}a(\lambda,z,z',v) dv d\lambda\Big|
\\&\lesssim\Big|\int_{\R^m} \int_0^\infty |(t-\tau)+\Phi(z,z',v)|^{-K}\partial_\lambda^K\left((1-\chi)(\lambda) \varphi(2^{-j}\lambda)\lambda^{-(\mu+\tilde{\mu})} \lambda^{n-1+\frac m2}a(\lambda,z,z',v) \right) d\lambda dv\Big|.
\end{split}
\end{equation*}
Note that $a$ is compactly supported in variable $v$, $t-\tau>0$ and $\Phi(z,z',v)\geq \epsilon>0$. Consequently,
 \begin{equation*}
\begin{split}
&\Big|\int_0^\infty e^{i(t-\tau)\lambda}(1-\chi)(\lambda) \varphi(2^{-j}\lambda)\lambda^{-(\mu+\tilde{\mu})} \int_{\R^m} e^{i\lambda\Phi(z,z',v)}\lambda^{n-1+\frac m2}a(\lambda,z,z',v) dv d\lambda\Big|\\
&\lesssim|(t-\tau)+\epsilon|^{-K} \int_{2^{j-1}}^{2^{j+1}}  \lambda^{n-1+\frac m2-\mu-\tilde{\mu}-K} d\lambda \,e^{-(\rho-\delta)d(z,z')}
\\&\lesssim 2^{j(n+\frac m2-\mu-\tilde{\mu})}(2^j(|t-\tau|+\epsilon))^{-K}e^{-(\rho-\delta)d(z,z')},
\end{split}
\end{equation*}
which implies \eqref{kk'in'} by choosing $K$ large enough.

\end{proof}

\section{Proof of Theorem \ref{GWP}}

This section is devoted to the proof of Theorem \ref{GWP} by using the Strichartz estimates in Theorem \ref{thm:hstri} and Theorem \ref{thm:ihstri}.\vspace{0.2cm}

Let $p >1$. The proof is standard and based on a contraction mapping argument in the Banach space $L^{p+1}(\mathbb R^+ \times M^\circ)$. Define the map $\mathcal T$ by $v=\mathcal T u$ where $v$ solves, given $u \in L^{p+1}(\mathbb R^+ \times M^\circ)$, 
\begin{equation}
\begin{cases}
\partial_{t}^2v-\Delta_g v-\rho^2 v=F_p(u), \quad (t,z)\in I\times M^\circ; \\ u(0)=\nu u_0(z),
~\partial_tu(0)=\nu u_1(z).
\end{cases}
\end{equation}
Notice first that all the Strichartz estimates are global in time so one has $I=\mathbb R^+$.  
Choose $q=r=\tilde q=\tilde r=p+1>2$, then we can verify, for any $p \in (1, 1+\frac{4}{n-1})$,
\begin{equation*}
(p+1,p+1,\mu)\in \Lambda_e,\quad s_e< \mu
\end{equation*}
and
\begin{equation*}
(p+1,p+1,1/2)\in \Lambda_e,\quad s_e< 1/2.
\end{equation*}
Therefore, for fixing $0<\epsilon\ll1$, we can apply Theorems \ref{thm:hstri} with $(p+1,p+1,\mu_0)\in \Lambda_e$ and Theorem \ref{thm:ihstri} with $(p+1,p+1,1/2)\in \Lambda_e$ (or directly \eqref{(p+1)-inh}) to obtain
 \begin{equation*}
\begin{split}
\|v(t,z)\|_{L^{p+1}(I \times M^\circ)}
\lesssim \nu \Big (\|u_0\|_{H^{\mu,0}(M^\circ)}&+\|u_1\|_{
H^{\mu-1,-\epsilon}(M^\circ)} \Big )+\||u (t,z)|^p\|_{L^{\frac{p+1}{p}}_t(I;L^{\frac{p+1}{p}}(M^\circ))}.
\end{split}
\end{equation*}
Thus this gives 
 \begin{equation*}
\begin{split}
&\|v(t,z)\|_{L^{p+1}(I \times M^\circ)}
\lesssim \nu \Big (\|u_0\|_{H^{\mu,0}(M^\circ)}+\|u_1\|_{
H^{\mu-1,-\epsilon}(M^\circ)} \Big )+ \|u\|^{p}_{L^{p+1}_t(I;L^{p+1}(M^\circ))}.
\end{split}
\end{equation*}

Therefore the operator $\mathcal T$ maps $L^{p+1}(\mathbb R^+ \times M^\circ)$ into itself. Furthermore, a standard computation shows that if $\nu$ is small enough, $\mathcal T$ maps a ball of $L^{p+1}(\mathbb R^+ \times M^\circ)$ into itself and is actually a contraction, hence by the Banach fixed point theorem this leads to the desired result (see for instance \cite{SSW}).

\begin{center}

\end{center}

\end{document}